\def\rg{\hbox to 30pt{\rightarrowfill}}
\def\lg{\hbox to 30pt{\leftarrowfill}}
          \newtheorem{theorem}{Theorem}[section]
      \newtheorem{proposition}[theorem]{Proposition}
      \newtheorem{corollary}[theorem]{Corollary}
      \newtheorem{lemma}[theorem]{Lemma}
      \newtheorem{example}[theorem]{Example}
      \newtheorem{remark}[theorem]{Remark}
      \newcommand{\BB}{{\mathbb B}}
      \newcommand{\CC}{{\mathbb C}}
      \newcommand{\NN}{{\mathbb N}}
      \newcommand{\QQ}{{\mathbb Q}}
      \newcommand{\ZZ}{{\mathbb Z}}
      \newcommand{\DD}{{\mathbb D}}
      \newcommand{\FF}{{\mathbb F}}
      \newcommand{\TT}{{\mathbb T}}
      \newcommand{\cA}{{\mathcal A}}
      \newcommand{\cC}{{\mathcal C}}
      \newcommand{\cD}{{\mathcal D}}
      \newcommand{\cE}{{\mathcal E}}
      \newcommand{\cH}{{\mathcal H}}
      \newcommand{\cI}{{\mathcal I}}
      \newcommand{\cK}{{\mathcal K}}
      \newcommand{\cM}{{\mathcal M}}
      \newcommand{\cN}{{\mathcal N}}
      \newcommand{\cQ}{{\mathcal Q}}
      \newcommand{\cP}{{\mathcal P}}
      \newcommand{\cR}{{\mathcal R}}
      \newcommand{\cU}{{\mathcal U}}
      \newcommand{\cV}{{\mathcal V}}
      \newcommand{\cW}{{\mathcal W}}
      \newdimen\expt
      \def\boxit#1{\setbox0\hbox{$\displaystyle{#1}$}
            \hbox{\lower.4\expt
       \hbox{\lower3\expt\hbox{\lower\dp0
            \hbox{\vbox{\hrule height.4\expt
       \hbox{\vrule width.4\expt\hskip3\expt
            \vbox{\vskip3\expt\box0\vskip2\expt}%
       \hskip3\expt\vrule width.4\expt}\hrule height.4\expt}}}}}}
\begin{document}
       \pagestyle{myheadings}
      \markboth{ Gelu Popescu}{  Pluriharmonic functions on noncommutative polyballs  }

      \title [   And\^ o   dilations and inequalities on  noncommutative varieties ]
      {      And\^ o   dilations and inequalities on  noncommutative varieties }
        \author{Gelu Popescu}
\date{January 4, 2017}
      \thanks{Research supported in part by  NSF grant DMS 1500922}
      \subjclass[2010]{Primary:   47A13; 47A20;   Secondary: 47A63; 47A45; 46L07.}
      \keywords{   And\^ o's dilation; And\^ o's inequality; Commutant lifting; Fock space;  Noncommutative biball; Noncommutative variety;  Poisson transform; Schur representation.
}

      \address{Department of Mathematics, The University of Texas
      at San Antonio \\ San Antonio, TX 78249, USA}
      \email{\tt gelu.popescu@utsa.edu}

\begin{abstract}\
 And\^ o proved a dilation result that implies his celebrated inequality which says that if $T_1$ and $T_2$ are commuting contractions on a Hilbert space, then for any polynomial $p$ in two variables,
 $$
\|p(T_1, T_2)\|\leq \|p\|_{\DD^2},
$$
where $\DD^2$ is the bidisk in $\CC^2$. The main goal of the present paper is to find analogues of And\^ o's results for the elements of the bi-ball ${\bf P}_{n_1,n_2}$ which consists of all pairs $({\bf X}, {\bf Y})$ of row contractions ${\bf X}:=(X_{1},\ldots, X_{n_1})$  and
      ${\bf Y}:=(Y_{1},\ldots, Y_{n_2})$ which commute, i.e. each entry of ${\bf X}$ commutes with each entry of ${\bf Y}$.
      The results are obtained in a more general setting, namely, when ${\bf X}$ and ${\bf Y}$ belong to  noncommutative varieties $\cV_1$ and $\cV_2$ determined by row contractions subject to constraints such as
$$q(X_1,\ldots, X_{n_1})=0 \quad \text{and} \quad r(Y_1,\ldots, Y_{n_2})=0, \qquad  q\in \cP, r\in \cR,
$$
 respectively, where $\cP$ and $\cR$ are sets of noncommutative  polynomials.
We obtain dilation results which  simultaneously  generalize   Sz.-Nagy dilation theorem  for contractions, And\^ o's dilation theorem for commuting contractions, Sz.-Nagy--Foia\c s commutant lifting theorem, and  Schur's representation for the unit ball of $H^\infty$, in the framework of noncommutative varieties and Poisson kernels on Fock spaces. This leads to one of the main results of the paper, an And\^ o type inequality on noncommutative varieties, which,  in the particular case when  $n_1=n_2=1$ and $T_1$ and $T_2$ are commuting contractive matrices  with spectrum in the open unit disk $\DD:=\{z\in \CC:\ |z|<1\}$, takes the form
$$
\|p(T_1, T_2)\|\leq \min\left\{ \|p(B_1\otimes I_{\CC^{d_1}},\varphi_1(B_1))\|, \|p(\varphi_2(B_2), B_2\otimes I_{\CC^{d_2}})\|\right\},
$$
where $(B_1\otimes I_{\CC^{d_1}},\varphi_1(B_1))$ and $(\varphi_2(B_2), B_2\otimes I_{\CC^{d_2}})$ are analytic dilations of $(T_1, T_2)$ while $B_1$ and $B_2$ are the universal models  associated with $T_1$ and $T_2$, respectively. In this  setting, the inequality  is sharper than And\^ o's inequality and  Agler-McCarthy's inequality. We obtain  more general inequalities  for arbitrary commuting contractive matrices  and improve And\^ o's inequality for   commuting contractions  when at least one of them is of class $\cC_0$.

We prove that there   is a universal model $(S\otimes I_{ \ell^2},\varphi(S))$, where $S$ is the unilateral shift and $\varphi(S)$ is an  isometric  analytic Toeplitz operator on $H^2(\DD)\otimes \ell^2$,   such that
$$
\|[p_{rs}({T}_1,{T}_2)]_{k}\|\leq  \|[p_{rs}(S\otimes I_{ \ell^2},\varphi(S))]_{k }\|,
$$
 for any commuting contractions ${T}_1$ and $ {T}_2$ on Hilbert spaces,  any   $k\times k$ matrix $[p_{rs}]_{k}$ of polynomials in $\CC[z,w]$, and  any $k\in \NN$. Analogues of this result for the bi-ball
 ${\bf P}_{n_1,n_2}$ and  for a class of noncommutative varieties are also considered.
 \end{abstract}

      \maketitle

\section*{Introduction}

 Two of the most important results in operator theory are  von Neumann's inequality \cite{vN} and And\^ o's generalization \cite{An} to two commuting contractions on a Hilbert space (see also \cite{SzFBK-book}, \cite{CW1},   \cite{CW2}, and \cite{DS}). And\^ o's inequality is essentially equivalent (see \cite{NV} and \cite{Pa-book}) to the Sz.-Nagy--Foia\c s commutant lifting theorem \cite{SzF} and  states that if $T_1$ and $T_2$ are commuting contractions on a Hilbert space, then for any polynomial $p$ in two variables,
$$
\|p(T_1, T_2)\|\leq \|p\|_{\DD^2},
$$
where $\DD^2$ is the bidisk in $\CC^2$.
Varopoulus \cite{Varo} found a counterexample showing that the inequality  does not extend to three mutually commuting contractions. For a nice survey and further generalizations of these inequalities  we refer to Pisier's book \cite{Pi-book}.

In a remarkable paper \cite{AM}, Agler and McCarthy improved And\^ o's inequality in the case of contractive matrices with no eigenvalues of modulus 1. They showed that
$$
 \|p(T_1, T_2)\|\leq \|p\|_V,
$$
where $V$ is some  distinguished  variety in the bidisc $\DD^2$ depending on $T_1$ and $T_2$. This result was extended by Das and Sarkar \cite{DS} to pure commuting contractions.

To write  the present paper,  we were  inspired by the work of Agler-McCarthy \cite{AM} and Das-Sarkar \cite{DS} on distinguished varieties and by Ando's inequality for two commuting contractions.
One of the main results of our paper improves the above-mentioned inequality for commuting contractive matrices $T_1$ and $T_2$ with spectrum in the open unit disk $\DD$. The idea is the following. Each matrix $T_i$ lives in an operator-valued variety $\cV_i$, which is uniquely determined  by its minimal polynomial, and admits  a universal model $B_i$. Using noncommutative Poisson transforms \cite{Po-poisson}, we show that  the pair $(T_1, T_2)$  admits {\it analytic dilations} $(B_1\otimes I_{\CC^{d_1}},\varphi_1(B_1))$ and $(\varphi_2(B_2), B_2\otimes I_{\CC^{d_2}})$ which encode the algebraic and geometric structure of $(T_1, T_2)$. This leads to  the inequality
$$
\|[p_{rs}(T_1, T_2)]_k\|\leq \min\left\{ \|[p_{rs}(B_1\otimes I_{\CC^{d_1}},\varphi_1(B_1))]_k\|, \|[p_{rs}(\varphi_2(B_2), B_2\otimes I_{\CC^{d_2}})]_k\|\right\}
$$
 for any   $k\times k$ matrix $[p_{rs}]_{k}$,  $k\in \NN$, of polynomials in $\CC[z,w]$,
which is sharper than And\^ o's inequality,  Agler-McCarthy's inequality, and Das-Sarkar's extension (see the remarks and examples following  Theorem \ref{AM3}).
In fact, we obtain a more general result  for arbitrary commuting contractive matrices (see Theorem \ref{general}). We also  improve And\^ o's inequality for   commuting contractions  when at least one of them is of class $\cC_0$ (see Theorem \ref{A-M2} and Remark \ref{rem}).

On the other hand, we  prove that there   is a universal model $(S\otimes I_{ \ell^2},\varphi(S))$, where $S$ is the unilateral shift on the Hardy space $H^2(\DD)$  and $\varphi(S)$ is an  isometric  analytic Toeplitz operator on $H^2(\DD)\otimes \ell^2$,   such that
$$
\|[p_{rs}({T}_1,{T}_2)]_{k}\|\leq  \|[p_{rs}(S\otimes I_{ \ell^2},\varphi(S))]_{k }\|,
$$
 for any commuting contractions ${T}_1$ and $ {T}_2$ on Hilbert spaces,  any   $k\times k$ matrix $[p_{rs}]_{k}$ of polynomials in $\CC[z,w]$ and  any $k\in \NN$.
The closed non-self-adjoint algebra $\cA_u({\DD^2})$ generated by
$ S \otimes I_{\ell^2} , \varphi({S})$ and the identity  can be seen as the universal operator algebra for two commuting contractions. All these results are presented in Section 3.

The main goal of our paper  is to obtain  And\^ o dilations and inequalities  on noncommutative varieties  in the more general setting of bi-balls. In this setting, it would be interesting to find  good analogues for {\it distinguished varieties} in the sense of  \cite{AM}. For now, this remains an open problem.  To present our results we need some notation and definitions.

Throughout this paper, $B(\cH)$ stands for the algebra of all bounded linear operators on a Hilbert space $\cH$. We denote by  $B(\cH)^{n_1}\times_c  B(\cH)^{n_2}$, where $n_1, n_2 \in\NN:=\{1,2,\ldots\}$,
   the set of all pairs  $ ({\bf X},{ \bf Y})$ in $B(\cH)^{n_1}\times  B(\cH)^{n_2}$
     with the property that the entries of ${\bf X}:=(X_{1},\ldots, X_{n_1})$  are commuting with the entries of
      ${\bf Y}:=(Y_{1},\ldots, Y_{n_2})$.
   Let ${\bf n}:=(n_1, n_2)$ and define  the open  {\it bi-ball}
  $${\bf P_n}(\cH):=[B(\cH)^{n_1}]_1\times_c  [B(\cH)^{n_2}]_1,
  $$
  where
    $$[B(\cH)^{n}]_1:=\{(X_1,\ldots, X_n)\in B(\cH)^{n}:\ \|  X_1X_1^*+\cdots +X_nX_n^*\|<1\}, \quad n\in \NN.
    $$
    The closure of ${\bf P_n}(\cH)$ in the operator norm topology is denoted by ${\bf P}_{\bf n}^-(\cH)$.
For simplicity, throughout this paper, $[X_1,\ldots, X_n]$ denotes either the $n$-tuple $(X_1,\ldots, X_n)\in B(\cH)^n$ or the operator row matrix
$[X_1\cdots X_n]$ acting from $\cH^{(n)}$, the direct sum of $n$ copies of the Hilbert space $\cH$, to $\cH$.

Let $H_n$ be an $n$-dimensional complex  Hilbert space with
orthonormal
      basis
      $e_1,\ldots,e_n$, where $n\in \NN$.
       We consider the full Fock space  of $H_n$ defined by
      $$F^2(H_n):=\CC1\oplus \bigoplus_{k\geq 1} H_n^{\otimes k},$$
      where  $H_n^{\otimes k}$ is the (Hilbert)
      tensor product of $k$ copies of $H_n$.
      Define the left  (resp.~right) {\it creation
      operators}  $S_i$ (resp.~$R_i$), $i=1,\ldots,n$, acting on $F^2(H_n)$  by
      setting
      $$
       S_i\varphi:=e_i\otimes\varphi, \quad  \varphi\in F^2(H_n),
      $$
       (resp.~$
       R_i\varphi:=\varphi\otimes e_i
      $). The universal models asociated with the unit ball $[B(\cH)^n]_1$  are  ${\bf S}:=[S_1,\ldots, S_n]$ and ${\bf R}:=[R_1,\ldots, R_n]$.
The noncommutative disc algebra $\cA_n$ (resp.~$\cR_n$) is the norm
closed non-self-adjoint algebra generated by the left (resp.~right) creation
operators and the identity. The   noncommutative analytic Toeplitz
algebra $F_n^\infty$ (resp.~$\cR_n^\infty$)
 is the  weakly
closed version of $\cA_n$ (resp.~$\cR_n$). These algebras were
introduced in \cite{Po-von}, \cite{Po-funct} in connection with a noncommutative von
Neumann  type inequality.

Let $J$ be a WOT-closed two-sided ideal of $F_n^\infty$
such that $J\neq F_n^\infty$, and define the noncommutative variety $\cV_J(\cH)$ to be  the set of all {\it pure row contractions} ${\bf T}:=[T_1,\ldots, T_n]$, \ $T_i\in B(\cH)$, such that
$$
f(T_1,\ldots, T_n)=0\quad \text{ for any }\quad f\in J,
$$
where $f(T_1,\ldots, T_n)$ is defined according to the $F_n^\infty$-functional calculus (see \cite{Po-funct}).
We proved in \cite{Po-varieties}  that there are  some universal models ${\bf B}:=[B_1,\ldots, B_n]$ and ${\bf W}:=[W_1,\ldots, W_n]$  associated with the noncommutative variety $\cV_J$, which are compressions of the creation operators  to a certain  joint co-invariant subspace uniquely determined by $J$.  The   noncommutative Hardy
algebra  $\cR_n^\infty(\cV_J)$
 is the  weakly closed non-self-adjoint algebra generated by $W_1,\ldots, W_n$ and the identity.

  In Section 1, borrowing ideas from \cite{AM} and \cite{DS}, we obtain  dilation results (Theorem \ref{dil} and Theorem \ref{dil-com}), for the  elements  of  noncommutative varieties $\cV_J$, which are simultaneous multi-variable generalizations of Sz.-Nagy dilation theorem  for contractions \cite{SzFBK-book},  And\^ o's dilation theorem for commuting contractions \cite{An}, Sz.-Nagy--Foia\c s commutant lifting theorem \cite{SzFBK-book}, and of Schur's representation for the unit ball of $H^\infty$ \cite{Sc}, in the framework of noncommutative Poisson kernels associated with $\cV_J$. As consequences, we provide a Schur type representation  for the noncommutative Hardy algebra $\cR_n^\infty(\cV_J)$ (see Theorem \ref{transfer}) and a new proof for the commutant lifting theorem for pure row contractions in noncommutative varieties (see \cite{Po-isometric}, \cite{Po-varieties}).

In Section 2, using the results from Section 1, we   prove that any pair $({\bf T}_1, {\bf T}_2)$ in
the noncommutative variety
$${{\bf P}_{J_1, J_2,{\bf n}}^-}(\cH):=\left\{({\bf T}_1, {\bf T}_2)\in {\bf P}_{\bf n}^-(\cH): {\bf T}_1\in \cV_{J_1}(\cH), {\bf T}_2\in \cV_{J_2}(\cH)\right\},
$$
has analytic dilations
 $$({\bf B}_1\otimes  I_{\ell^2},  \varphi_1({\bf  W}_1))\quad \text{and} \quad
 (\varphi_2({\bf W}_2), {\bf B}_2\otimes I_{\ell^2})
 $$
 where
$\varphi_1({\bf  W}_1)$ and $\varphi_2({\bf  W}_2)$ are some  contractive  multi-analytic operators with  respect to the universal models ${\bf B}_1$ and ${\bf B}_2$ of the varieties $\cV_{J_1}$ and $\cV_{J_2}$, respectively.
As a consequence, we show that the inequality
$$
\|[p_{rs}({\bf T}_1,{\bf T}_2)]_{k}\|\leq \min \left\{ \|[p_{rs}({\bf B}_1\otimes I_{\ell^2},  \varphi_1({\bf  W}_1))]_{k}\|,  \|[p_{rs}({\varphi_2({\bf  W}_2), \bf B}_2\otimes I_{\ell^2})]_{k}\|\right\} \qquad  p_{rs}\in \CC\left<{\bf X}, {\bf Y}\right>
$$
holds
 for any    $k\in \NN$, where $\CC\left<{\bf X}, {\bf Y}\right>$ is  the complex algebra of all  polynomials in  noncommutative indeterminates  $X_{1},\ldots, X_{n_1}$ and $Y_{1},\ldots, Y_{n_2}$.

On the other hand, we   prove that the bi-ball
 ${{\bf P}_{{\bf n}}^-}(\cH)$
has a universal model $$(S_1\otimes I_{\ell^2},\ldots S_{n_1}\otimes I_{\ell^2},  \psi_1({\bf  R}),\ldots, \psi_{n_2}({\bf  R})),
 $$
 where
$\psi({\bf  R})=(\psi_1({\bf R}),\ldots, \psi_{n_2}({\bf R})$ is some contractive multi-analytic operator with  respect to the universal model ${\bf S}=[S_1,\ldots, S_{n_1}]$, and ${\bf R}=[R_1,\ldots, R_{n_1}]$.
More precisely, we show that
$$
\|[p_{rs}({\bf T}_1,{\bf T}_2)]_{k}\|\leq  \|[p_{rs}({\bf S}\otimes I_{\ell^2},  \psi({\bf  R}))]_{k}\|, \qquad  p_{rs}\in \CC\left<{\bf X}, {\bf Y}\right>,
$$
 for any  $({\bf T}_1, {\bf T}_2)\in {{\bf P}_{{\bf n}}^-}(\cH)$ and  any $k\in \NN$.
The closed non-self-adjoint algebra  $\cA({\bf P_{n}})$ generated by
$ S_1\otimes I_{\ell^2},\ldots, S_{n_1}\otimes I_{\ell^2}, \psi_1({\bf R}),\ldots, \psi_{n_2}({\bf R})$ and the identity can be seen as the universal operator algebra of  the bi-ball ${{\bf P}_{{\bf n}}^-}$. A similar result is provided for a class of noncommutative varieties in ${\bf P}_{\bf n}^-$.

We remark that  when $J=\{0\}$, the variety $\cV_{\{0\}}$ coincides with the set of all pure row contractions and the universal models are ${\bf S}=[S_1,\ldots, S_n]$ and ${\bf R}:=[R_1,\ldots, R_n]$.
 On the other hand,  in the particular case when   $J_c$ is  the ideal generated by the polynomials
$S_iS_j-S_jS_i$, \ $i,j=1,\ldots, n$,   the  universal model  of $\cV_{J_c}$ consists of   the creation operators on the symmetric Fock space $F_s^2$.
Arveson showed in  \cite{Arv2} that $F_s^2$  can be identified with  a subspace space  ${\bf H}^2$
  of analytic functions in $\BB_n$, namely,
the
reproducing kernel Hilbert space
with reproducing kernel $K_n: \BB_n\times \BB_n\to \CC$ defined by
 $$
 K_n(z,w):= {\frac {1}
{1-\langle z, w\rangle_{\CC^n}}}, \qquad z,w\in \BB_n.
$$
 The algebra $\cR_n^\infty(\cV_{J_c})$
 can be identified with  the algebra of all  multipliers  of ${\bf H}^2$.  Under this identification the creation operators  become the multiplication operators $M_{z_1},\ldots, M_{z_n}$ by the coordinate functions $z_1,\ldots, z_n$ of $\CC^n$.  All the results of this paper, concerning And\^ o dilations and inequalities,  apply to these two important particular cases.

Finally, we would like to thank the referee for helpful comments and suggestions on the paper.

\bigskip

\section{And\^ o type dilations on  noncommutative varieties}

In this section, we obtain dilation results which  simultaneously  generalize   Sz.-Nagy dilation theorem  for contractions, And\^ o's dilation theorem for commuting contractions, Sz.-Nagy--Foia\c s commutant lifting theorem, and  Schur's representation for the unit ball of $H^\infty$, in the framework of noncommutative varieties and Poisson kernels on Fock spaces.

 Let $\FF_n^+$ be the unital free semigroup on $n$ generators
$g_1,\ldots, g_n$ and the identity $g_0$.  The length of $\alpha\in
\FF_n^+$ is defined by $|\alpha|:=0$ if $\alpha=g_0$ and
$|\alpha|:=k$ if
 $\alpha=g_{i_1}\cdots g_{i_k}$, where $i_1,\ldots, i_k\in \{1,\ldots, n\}$.
If $(X_1,\ldots, X_n)\in B(\cH)^n$, where $B(\cH)$ is the algebra of
all bounded linear operators on the Hilbert space $\cH$,    we set
$X_\alpha:= X_{i_1}\cdots X_{i_k}$  and $X_{g_0}:=I_\cH$.  Let $H_n$ be a finite dimensional Hilbert space with orthonormal basis $e_1,\ldots, e_n$. We denote
$e_\alpha:= e_{i_1}\otimes\cdots \otimes  e_{i_k}$ and $e_{g_0}:=1$.
Note that $\{e_\alpha\}_{\alpha\in \FF_n^+}$ is an orthonormal basis
for the full Fock space $F^2(H_n)$.

  We recall   (\cite{Po-von},  \cite{Po-funct},
      \cite{Po-analytic})
       a few facts
       concerning multi-analytic   operators on Fock
      spaces.
         We say that
       a bounded linear
        operator
      $M$ acting from $F^2(H_n)\otimes \cK$ to $ F^2(H_n)\otimes \cK'$ is
       multi-analytic with respect to the universal model ${\bf S}:=[S_1,\ldots, S_n]$
      if
      \begin{equation*}
      M(S_i\otimes I_\cK)= (S_i\otimes I_{\cK'}) M\quad
      \text{\rm for any }\ i=1,\dots, n.
      \end{equation*}
       We can associate with $M$ a unique formal Fourier expansion
      $  \sum_{\alpha \in \FF_n^+}
      R_\alpha \otimes \theta_{(\alpha)}$,
where $\theta_{(\alpha)}\in B(\cK, \cK')$.
       We  know  that $\sum_{\alpha\in \FF_n^+}  \theta_{(\alpha)}^*\theta_{(\alpha)}\leq \|M\| I_\cK$ and
        $$M =\text{\rm SOT-}\lim_{r\to 1}\sum_{k=0}^\infty
      \sum_{|\alpha|=k}
         r^{|\alpha|} R_\alpha\otimes \theta_{(\alpha)},
         $$
         where, for each $r\in [0,1)$, the series converges in the uniform norm.
      Moreover, the set of  all multi-analytic operators in
      $B(F^2(H_n)\otimes \cK,
      F^2(H_n)\otimes \cK')$  coincides  with
      $\cR_n^\infty\bar \otimes B(\cK,\cK')$,
      the WOT-closed operator space generated by the spatial tensor
      product.
A multi-analytic operator is called {\it inner} if it is an isometry. We
remark that  similar results are valid  for  multi-analytic
operators with respect to the right creation operators $R_1,\ldots,
R_n$.

We need to   recall from \cite{Po-poisson} a few facts
about noncommutative Poisson transforms associated with row
contractions ${\bf T}:=[T_1,\ldots, T_n]$, \ $T_i\in B(\cH)$,  i.e. $T_1T_1^*+\cdots +T_nT_n^*\leq I$.
         For  each $0<r\leq 1$, define the defect operator
$\Delta_{{\bf T},r}:=(I_\cK-r^2T_1T_1^*-\cdots -r^2 T_nT_n^*)^{1/2}$ and the defect space   $\cD_{r{\bf T}}=\overline{\Delta_{{\bf T},r} \cH}$.
The {\it noncommutative Poisson  kernel} associated with ${\bf T}$ is the family
of operators
$$
K_{{\bf T},r} :\cH\to    F^2(H_n)\otimes \cD_{r{\bf T}}, \quad
0<r\leq 1,
$$
defined by
\begin{equation*}
K_{{\bf T},r}h:= \sum_{k=0}^\infty \sum_{|\alpha|=k}  e_\alpha\otimes r^{|\alpha|}
\Delta_{{\bf T},r} T_\alpha^*h,\quad h\in \cH.
\end{equation*}
When $r=1$, we denote $\Delta_{\bf T}:=\Delta_{{\bf T},1}$ and $K_{\bf T}:=K_{{\bf T},1}$.
The operators $K_{{\bf T},r}$ are isometries if $0<r<1$, and, if $r=1$,
$$
K_{\bf T}^*K_{\bf T}=I_\cK- \text{\rm SOT-}\lim_{k\to\infty} \sum_{|\alpha|=k}
T_\alpha T_\alpha^*.
$$
Thus $K_{\bf T}$ is an isometry if and only if ${\bf T}$ is a {\it pure row
 contraction},
i.e., $ \text{\rm SOT-}\lim\limits_{k\to\infty} \sum_{|\alpha|=k}
T_\alpha T_\alpha^*=0.$ A key property of the Poisson kernel
is that
\begin{equation*}
K_{T,r}(r^{|\alpha|} T_\alpha^*)=(S_\alpha^*\otimes I)K_{T,r}\qquad \text{ for any } 0<r\leq 1,\ \alpha\in \FF_n^+.
\end{equation*}
We refer to \cite{Po-poisson} and \cite{Po-Berezin-poly} for more on noncommutative Poisson transforms on
$C^*$-algebras generated by isometries.

Let
  ${\bf T}_1=[T_{1,1},\ldots, T_{1,n_1}]\in B(\cH)^{n_1}$ and   ${\bf T}'_1=[T_{1,1}',\ldots, T_{1,n_1}']\in B(\cH')^{n_1}$ be row contractions.
  Let ${\bf T}_2:=[T_{2,1},\ldots, T_{2,n_2}]$, with $T_{2,j}:\cH'\to \cH$,  be a  row contraction which intertwines ${\bf T}_1$ with ${\bf T}_1'$, i.e.
   $$
   T_{2,j}T_{1,i}'=T_{1,i}T_{2,j}
   $$
   for any $i\in \{1,\ldots, n_1\}$ and $j\in \{1,\ldots, n_2\}$. We denote by $\cI({\bf T}_1,{\bf T}_1')$ the set of all contractive intertwining operators ${\bf T}_2$  of ${\bf T}_1$ and ${\bf T}_1'$.
   A straightforward calculation reveals that
$$
\|\Delta_{{\bf T}_1}h\|^2 +\sum_{j=1}^{n_1}\|\Delta_{{\bf T}_2}T_{1,j}^*h\|^2
=
\sum_{j=1}^{n_2}\|\Delta_{{\bf T}_1'}T_{2,j}^*h\|^2 + \|\Delta_{{\bf T}_2}h\|^2
$$
for any $h\in \cH$.
 If
 the defect spaces $\cD_{{\bf T}_1}$, $\cD_{{\bf T}_1'}$,  and $\cD_{{\bf T}_2}$ are finite dimensional with  dimensions $d_1:=\dim \cD_{{\bf T}_1}$,$d_1':=\dim \cD_{{\bf T}_1'}$, $d_2:=\dim \cD_{{\bf T}_2}$, and
 $$
  d_1+n_1d_2=n_2 d_1'+d_2,
  $$
  then    there are unitary extensions $U:\cD_{{\bf T}_1}\oplus \bigoplus_{j=1}^{n_1}\cD_{{\bf T}_2} \to
\bigoplus_{j=1}^{n_2}\cD_{{\bf T}_1'}\oplus \cD_{{\bf T}_2}$ of the isometry
\begin{equation}
\label{iso1}
U\left(\Delta_{{\bf T}_1}h, \Delta_{{\bf T}_2}T_{1,1}^*h, \ldots, \Delta_{{\bf T}_2}T_{1,n_1}^*h\right):=
\left(\Delta_{{\bf T}_1'}T_{2,1}^*h, \ldots, \Delta_{{\bf T}_1'}T_{2,n_2}^*h,
\Delta_{{\bf T}_2}h\right),\qquad h\in \cH.
\end{equation}
We denote by $\cU_{\bf T}$ the set of all  unitary extensions of the isometry given by relation \eqref{iso1}.

In case the above-mentioned dimensional conditions are not satisfied, then   let $\cK$ be  an infinite dimensional Hilbert space and note that
the operator defined by
\begin{equation}
\label{iso2}
U\left(\Delta_{{\bf T}_1}h, \Delta_{{\bf T}_2}T_{1,1}^*h, 0,\ldots, \Delta_{{\bf T}_2}T_{1,n_1}^*h, 0\right):=
\left(\Delta_{{\bf T}_1'}T_{2,1}^*h, \ldots, \Delta_{{\bf T}_1'}T_{2,n_2}^*h,
\Delta_{{\bf T}_2}h, 0\right)
\end{equation}
is an isometry which can be extended to a unitary operator
$$U:\cD_{{\bf T}_1}\oplus \bigoplus_{j=1}^{n_1}(\cD_{{\bf T}_2}\oplus \cK)\to  \bigoplus_{j=1}^{n_2}\cD_{{\bf T}_1'}\oplus (\cD_{{\bf T}_2}\oplus \cK).
$$
 In this case, we denote by $\cU_{\bf T}^\cK$ the set of all unitary extensions of the isometry defined by \eqref{iso2}.
Let
$ U=\left[ \begin{matrix} A&B\\C&D \end{matrix}\right]
$
be the operator matrix representation of $U\in \cU_{\bf T}^\cK$, where
\begin{equation}\label{ABCD}\begin{split}
 A&:\cD_{{\bf T}_1}\to \bigoplus_{j=1}^{n_2}\cD_{{\bf T}_1'},\\
 B&:\bigoplus_{j=1}^{n_1}(\cD_{{\bf T}_2}\oplus \cK)\to \bigoplus_{j=1}^{n_2}\cD_{{\bf T}_1'},\\
 C&: \cD_{{\bf T}_1}\to \cD_{{\bf T}_2}\oplus \cK, \text{\rm and }\\
 D&: \bigoplus_{j=1}^{n_1}(\cD_{{\bf T}_2}\oplus \cK)\to \cD_{{\bf T}_2}\oplus \cK.
 \end{split}
 \end{equation}
 Given an operator $Z:\cN\to \cM$ and $n\in \NN$, we introduce the ampliation
$$
\text{\rm diag}_{n}(Z):=\left(\begin{matrix}
Z&\cdots &0\\
\vdots&\ddots& \vdots\\
0&\cdots&Z
\end{matrix} \right):\bigoplus_{s=1}^n \cN\to \bigoplus_{s=1}^n \cM.
$$
 We also use  the operator column notation
$\left[\begin{matrix}X_{(\alpha)}\\
 \vdots\\
   |\alpha|=k\end{matrix}\right]$, where
  the entries $X_{(\alpha)}$ are  arranged in the lexicographic order of the free semigroup $\FF_{n_1}^+$, that is $g_0, g_1,\ldots, g_{n_1},
g_1g_1,\ldots, g_1g_{n_1}, \ldots,$ $ g_{n_1}g_1,\ldots, g_{n_1}g_{n_1}, \ldots$ and so on.

In what follows, we need the following  technical result.
\begin{lemma} \label{le1}
\begin{equation*}
\begin{split}
&\text{\rm diag}_{n_1}\left(D\text{\rm diag}_{n_1}\left(\cdots
D\text{\rm diag}_{n_1}\left(\widehat\Delta_{{\bf T}_2}\right)
 {\bf T}_1^*\cdots\right){\bf T}_1^*\right){\bf T}_1^*\\
 &\qquad\qquad\qquad =
 \text{\rm diag}_{n_1}\left(D\text{\rm diag}_{n_1}\left(\cdots
D\text{\rm diag}_{n_1}\left(\widehat\Delta_{{\bf T}_2}\right)
 \cdots\right)\right)\left[\begin{matrix}T_{1,\alpha}^*\\
 \vdots\\
 \alpha\in \FF_{n_1}^+, |\alpha|=p\end{matrix}\right],
\end{split}
\end{equation*}
where $\text{\rm diag}_{n_1}$ appears  $p$ times on each side of the equality, and $\widehat\Delta_{{\bf T}_2}:=\left[\begin{matrix} \Delta_{{\bf T}_2}\\0\end{matrix}\right]:\cH\to \cD_{{\bf T}_2}\oplus \cK$.
\end{lemma}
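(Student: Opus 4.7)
The proof goes by induction on $p$, driven by two simple facts: the functor $\text{diag}_{n_1}(\cdot)$ is multiplicative (for composable operators $X,Y$, $\text{diag}_{n_1}(XY)=\text{diag}_{n_1}(X)\,\text{diag}_{n_1}(Y)$), and the lexicographic ordering on $\FF_{n_1}^+$ restricted to words of length $p$ is obtained by prepending each generator $g_s$, $s=1,\ldots,n_1$, to the lex-ordered list of words of length $p-1$.

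Denote the left-hand side with $p$ applications of $\text{diag}_{n_1}$ by $E_p$, and set
$M_p:=\text{diag}_{n_1}\!\left(D\,\text{diag}_{n_1}\!\left(\cdots D\,\text{diag}_{n_1}(\widehat\Delta_{{\bf T}_2})\cdots\right)\right)$
(with $p$ layers and no ${\bf T}_1^*$) and $\Xi_p:=\left[\begin{matrix}T_{1,\alpha}^*\\ \vdots\\ |\alpha|=p\end{matrix}\right]$. The goal is $E_p=M_p\Xi_p$. For $p=1$ this is immediate because $\Xi_1={\bf T}_1^*$. For the inductive step, unwrap one layer to write $E_p=\text{diag}_{n_1}(D\,E_{p-1})\,{\bf T}_1^*$, substitute the inductive hypothesis $E_{p-1}=M_{p-1}\Xi_{p-1}$, and apply multiplicativity of $\text{diag}_{n_1}$ to obtain
\begin{equation*}
E_p=\text{diag}_{n_1}(D M_{p-1})\,\text{diag}_{n_1}(\Xi_{p-1})\,{\bf T}_1^*.
\end{equation*}
The first factor is exactly $M_p$, so the task reduces to showing $\text{diag}_{n_1}(\Xi_{p-1}){\bf T}_1^*=\Xi_p$.

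For this final identification, write out $\text{diag}_{n_1}(\Xi_{p-1}){\bf T}_1^*h$ block by block: the $s$-th outer block equals $\Xi_{p-1}T_{1,s}^*h$, whose $\beta$-th entry (with $|\beta|=p-1$) is $T_{1,\beta}^*T_{1,s}^*h=T_{1,g_s\beta}^*h$. The bijection $(s,\beta)\mapsto g_s\beta$ between the outer-then-inner indexing of the left-hand side and words of length $p$ in $\FF_{n_1}^+$ is precisely the lex-order identification; hence the stacked column is $\Xi_p$, completing the induction.

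The plan has no genuine obstacle; the only care needed is the bookkeeping of indices and the confirmation that the codomain identifications $\bigoplus_{s=1}^{n_1}\bigoplus_{|\beta|=p-1}\cH\cong\bigoplus_{|\gamma|=p}\cH$ induced by the outer-inner stacking agree with the lex-order identification used in the statement of $\Xi_p$. Once this matches, multiplicativity of $\text{diag}_{n_1}$ and the trivial base case do the rest.
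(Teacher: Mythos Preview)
Your proof is correct and follows essentially the same inductive strategy as the paper: both arguments hinge on the recursion $\alpha\mapsto g_s\beta$ that builds the lex-ordered list of length-$p$ words from the length-$(p-1)$ list. The only minor difference is packaging: the paper computes each side separately by induction to a closed form involving $\sum_{|\alpha|=p-1}D_\alpha\widehat\Delta_{{\bf T}_2}(T_{1,g_s}T_{1,\alpha})^*$ and then matches the two, whereas you run a single induction directly on the equality $E_p=M_p\Xi_p$ via the multiplicativity of $\text{diag}_{n_1}$ and the identity $\text{diag}_{n_1}(\Xi_{p-1}){\bf T}_1^*=\Xi_p$.
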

\begin{proof}
Let $D=[D_1,\ldots, D_{n_1}]$ with $D_i\in B(\cD_{{\bf T}_2}\oplus \cK)$, and ${\bf T}_1:=[T_{1,1},\ldots, T_{1,n_1}]$. Note that
$$
D\text{\rm diag}_{n_1}\left(\widehat\Delta_{{\bf T}_2}\right)
 {\bf T}_1^*=\sum_{i=1}^{n_1} D_i\widehat\Delta_{{\bf T}_2} T_{1,i}^*
 $$
and
$$
\text{\rm diag}_{n_1}\left(D\text{\rm diag}_{n_1}\left(\widehat\Delta_{{\bf T}_2}\right){\bf T}_1^*\right) {\bf T}_1^*
=
\left[\begin{matrix} \sum_{i=1}^{n_1} D_i\widehat\Delta_{{\bf T}_2} (T_{1,1}T_{1,i})^*\\
\vdots\\
\sum_{i=1}^{n_1} D_i\widehat\Delta_{{\bf T}_2} (T_{1,n_1}T_{1,i})^*
\end{matrix}\right].
$$
An inductive argument shows that
$$
\text{\rm diag}_{n_1}\left(D\text{\rm diag}_{n_1}\left(\cdots
D\text{\rm diag}_{n_1}\left(\widehat\Delta_{{\bf T}_2}\right)
 {\bf T}_1^*\cdots\right){\bf T}_1^*\right){\bf T}_1^*
 =
 \left[\begin{matrix} \sum\limits_{\alpha\in \FF_{n_1}^+, |\alpha|=p-1} D_\alpha\widehat\Delta_{{\bf T}_2} (T_{1,g_1}T_{1,\alpha})^*\\
\vdots\\
\sum\limits_{\alpha\in \FF_{n_1}^+, |\alpha|=p-1} D_\alpha\widehat\Delta_{{\bf T}_2} (T_{1,g_{n_1}}T_{1,\alpha})^*
\end{matrix}\right],
$$
where $\text{\rm diag}_{n_1}$ appears  $p$ times.
On the other hand, one can easily prove by induction that
\begin{equation*}
\begin{split}
&\text{\rm diag}_{n_1}\left(D\text{\rm diag}_{n_1}\left(\cdots
D\text{\rm diag}_{n_1}\left(\widehat\Delta_{{\bf T}_2}\right)
 \cdots\right)\right)
 \left[\begin{matrix}T_{1,\alpha}^*\\
 \vdots\\
  |\alpha|=p\end{matrix}\right]\\
  &\qquad \qquad \qquad
 =\text{\rm diag}_{n_1}\left([D_\alpha \widehat\Delta_{{\bf T}_2}: \ \alpha\in \FF_{n_1}^+, |\alpha|=p-1]\right)\left[\begin{matrix}
  \left[\begin{matrix}T_{1,g_1\alpha}^*\\
 \vdots\\
  |\alpha|=p-1\end{matrix}\right]\\
  \vdots\\
  \left[\begin{matrix}T_{1,g_{n_1}\alpha}^*\\
 \vdots\\
  |\alpha|=p-1\end{matrix}\right]
  \end{matrix}\right].
 \end{split}
 \end{equation*}
 The proof is complete.
  \end{proof}

 \begin{lemma} \label{series} Let ${\bf T}_2\in \cI({\bf T}_1,{\bf T}_1')$
     and let
$ U=\left[ \begin{matrix} A&B\\C&D \end{matrix}\right]
$
be the matrix representation of a unitary extension  $U\in \cU_{\bf T}^\cK$ (see relation \eqref{ABCD}).
If ${\bf T}_1$ is a pure row contraction,  then
 \begin{equation*}
  \begin{split}
  \text{\rm diag}_{n_2}(\Delta_{{\bf T}_1'}) {\bf T}_2^*h
=A\Delta_{{\bf T}_1}h
+B\sum_{p=0}^\infty \text{\rm diag}_{n_1}\left(D\text{\rm diag}_{n_1}\left(\cdots
D\text{\rm diag}_{n_1}\left(C\Delta_{{\bf T}_1}\right) {\bf T}_1^*\cdots\right){\bf T}_1^*\right){\bf T}_1^*h,
 \end{split}
  \end{equation*}
for any $h\in \cH$, where $\text{\rm diag}_{n_1}$ appears $p+1$ times in the general term of  the series.
 \end{lemma}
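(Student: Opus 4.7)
I would start by applying the unitary $U=\left[\begin{matrix} A & B \\ C & D \end{matrix}\right]$ to the input vector of the isometry \eqref{iso2} and reading off the two block-components of the image. This converts \eqref{iso2} into a target identity together with a recursion for $\widehat\Delta_{{\bf T}_2}$, both valid for every $h\in\cH$:
\begin{equation*}
\text{\rm diag}_{n_2}(\Delta_{{\bf T}_1'})\, {\bf T}_2^* h = A\Delta_{{\bf T}_1}h + B\, \text{\rm diag}_{n_1}(\widehat\Delta_{{\bf T}_2})\, {\bf T}_1^* h,
\end{equation*}
\begin{equation*}
\widehat\Delta_{{\bf T}_2} h = C\Delta_{{\bf T}_1}h + D\, \text{\rm diag}_{n_1}(\widehat\Delta_{{\bf T}_2})\, {\bf T}_1^* h.
\end{equation*}
The first identity already has the right left-hand side; the second will be iterated.

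Applying the second identity to each $T_{1,i}^*h$, $i=1,\ldots,n_1$, and stacking gives
\begin{equation*}
\text{\rm diag}_{n_1}(\widehat\Delta_{{\bf T}_2})\,{\bf T}_1^* h = \text{\rm diag}_{n_1}(C\Delta_{{\bf T}_1})\,{\bf T}_1^* h + \text{\rm diag}_{n_1}\bigl(D\,\text{\rm diag}_{n_1}(\widehat\Delta_{{\bf T}_2})\,{\bf T}_1^*\bigr)\,{\bf T}_1^* h.
\end{equation*}
Substituting into the first display extracts the $p=0$ summand of the series. Iterating the substitution $N+1$ times peels off the summands $p=0,\ldots,N$ and leaves a remainder $B R_{N+1} h$, where
\begin{equation*}
R_{N+1} h := \text{\rm diag}_{n_1}\bigl(D\,\text{\rm diag}_{n_1}(\cdots D\,\text{\rm diag}_{n_1}(\widehat\Delta_{{\bf T}_2})\,{\bf T}_1^*\cdots)\,{\bf T}_1^*\bigr)\,{\bf T}_1^* h,
\end{equation*}
with $\text{\rm diag}_{n_1}$ appearing $N+2$ times. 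It remains to check that $\|B R_{N+1}h\|\to 0$ as $N\to\infty$.

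This is where Lemma \ref{le1} enters and where the purity of ${\bf T}_1$ is used. Lemma \ref{le1} rewrites the remainder as
\begin{equation*}
R_{N+1} h = \text{\rm diag}_{n_1}\bigl(D\,\text{\rm diag}_{n_1}(\cdots D\,\text{\rm diag}_{n_1}(\widehat\Delta_{{\bf T}_2})\cdots)\bigr)\left[\begin{matrix} T_{1,\alpha}^* h\\ \vdots\\ |\alpha|=N+2\end{matrix}\right].
\end{equation*}
Since $U$ is unitary, each of its blocks $B$ and $D$ is a contraction and $\|\widehat\Delta_{{\bf T}_2}\|\leq 1$; the nested ampliations preserve these bounds, so the operator coefficient preceding the column has norm at most one. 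Therefore
\begin{equation*}
\|B R_{N+1} h\|^2 \leq \sum_{|\alpha|=N+2} \|T_{1,\alpha}^* h\|^2 = \Bigl\langle\sum_{|\alpha|=N+2} T_{1,\alpha}T_{1,\alpha}^*\, h,\,h\Bigr\rangle,
\end{equation*}
which tends to $0$ as $N\to\infty$ because ${\bf T}_1$ is a pure row contraction. Passing to the limit in the partial-sum identity yields the claimed series expansion, and the convergence is in norm on each fixed $h$. The only real technical obstacle is the bookkeeping of the iterated $\text{\rm diag}_{n_1}$ layers; Lemma \ref{le1} is designed exactly to repackage them into a single operator times the column $[T_{1,\alpha}^*h]_{|\alpha|=N+2}$, at which point purity of ${\bf T}_1$ makes the remainder vanish automatically.
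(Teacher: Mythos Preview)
Your proof is correct and follows essentially the same approach as the paper: derive the two block identities from \eqref{iso2} (the paper's relations \eqref{AA} and \eqref{CC}), iterate the second inside the first to produce the partial-sum identity \eqref{rel} with remainder, and then invoke Lemma~\ref{le1} together with purity of ${\bf T}_1$ to kill the remainder. The only cosmetic difference is that the paper first records the scalar-entry forms \eqref{A}, \eqref{C} before packaging them via $\widehat\Delta_{{\bf T}_2}$, whereas you write the packaged versions directly.
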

 \begin{proof}
 Due to relation \eqref{iso2}, we have
\begin{equation}\label{A}
A\Delta_{{\bf T}_1}h +B\left[\begin{matrix}
\Delta_{{\bf T}_2}T_{1,1}^*h\\0\\\vdots\\ \Delta_{{\bf T}_2}T_{1,n_1}^*h\\ 0
\end{matrix}\right]
=
\left[\begin{matrix}
\Delta_{{\bf T}_1'}T_{2,1}^*h\\\vdots\\ \Delta_{{\bf T}_1'}T_{2,n_2}^*h
\end{matrix}\right]
\end{equation}
and
\begin{equation}\label{C}
C\Delta_{{\bf T}_1}h +D\left[\begin{matrix}
\Delta_{{\bf T}_2}T_{1,1}^*h\\0\\\vdots\\ \Delta_{{\bf T}_2}T_{1,n_1}^*h\\ 0
\end{matrix}\right]
=
\left[\begin{matrix}
\Delta_{{\bf T}_2} h\\0
\end{matrix}\right]
\end{equation}
for any $h\in \cH$.
Since $\widehat\Delta_{{\bf T}_2}:=\left[\begin{matrix} \Delta_{{\bf T}_2}\\0\end{matrix}\right]:\cH\to \cD_{{\bf T}_2}\oplus \cK$, we can rewrite relations \eqref{A}  and \eqref{C} as
 \begin{equation}
 \label{AA}
 A\Delta_{{\bf T}_1}+B\text{\rm diag}_{n_1}(\widehat\Delta_{{\bf T}_2}) {\bf T}_1^*
 =
 \text{\rm diag}_{n_2}(\Delta_{{\bf T}_1'}) {\bf T}_2^*
 \end{equation}
and
\begin{equation}
 \label{CC}
 C\Delta_{{\bf T}_1}+D\text{\rm diag}_{n_1}(\widehat\Delta_{{\bf T}_2}) {\bf T}_1^*
 =
 \widehat\Delta_{{\bf T}_2},
 \end{equation}
respectively. Note that  using relation \eqref{CC}  we deduce that
\begin{equation}
\label{Di}
\text{\rm diag}_{n_1}\left(\widehat\Delta_{{\bf T}_2}\right){\bf T}_1^*=\text{\rm diag}_{n_1}\left(C\Delta_{{\bf T}_1}\right){\bf T}_1^*
+\text{\rm diag}_{n_1}\left(D\text{\rm diag}_{n_1}(\widehat\Delta_{{\bf T}_2}) {\bf T}_1^*\right){\bf T}_1^*,
\end{equation}
which combined with relation \eqref{AA} yields

\begin{equation*}
\begin{split}
\text{\rm diag}_{n_2}(\Delta_{{\bf T}_1'}) {\bf T}_2^*
=A\Delta_{{\bf T}_1}+B \text{\rm diag}_{n_1}\left(C\Delta_{{\bf T}_1}\right){\bf T}_1^*
+B\text{\rm diag}_{n_1}\left(D\text{\rm diag}_{n_1}(\widehat\Delta_{{\bf T}_2}) {\bf T}_1^*\right){\bf T}_1^*.
\end{split}
\end{equation*}
  Continuing to use relation \eqref{Di} in the latter relation  and the resulting ones, an induction argument leads to  the identity
  \begin{equation}\label{rel}
  \begin{split}
  \text{\rm diag}_{n_2}(\Delta_{{\bf T}_1'}) {\bf T}_2^*
=A\Delta_{{\bf T}_1}&+B \text{\rm diag}_{n_1}\left(C\Delta_{{\bf T}_1}\right){\bf T}_1^*\\
&+B\sum_{p=1}^m \text{\rm diag}_{n_1}\left(D\text{\rm diag}_{n_1}\left(\cdots
D\text{\rm diag}_{n_1}\left(C\Delta_{{\bf T}_1}\right) {\bf T}_1^*\cdots\right){\bf T}_1^*\right){\bf T}_1^* \\
&+B\text{\rm diag}_{n_1}\left(D\text{\rm diag}_{n_1}\left(\cdots
D\text{\rm diag}_{n_1}\left(\widehat\Delta_{{\bf T}_2}\right)
 {\bf T}_1^*\cdots\right){\bf T}_1^*\right){\bf T}_1^*,
  \end{split}
  \end{equation}
where $\text{\rm diag}_{n_1}$ appears $p+1$ times in the general term of  the sum above and $m+2$ times in the last term.
Since $\Delta_{{\bf T}_2}$ and $D$ are contractions and using Lemma \ref{le1}, one can easily see that
$$
\left\|
B\text{\rm diag}_{n_1}\left(D\text{\rm diag}_{n_1}\left(\cdots
D\text{\rm diag}_{n_1}\left(\widehat\Delta_{{\bf T}_2}\right)
 {\bf T}_1^*\cdots\right){\bf T}_1^*\right){\bf T}_1^*h\right\|
 \leq \|B\|\left(\sum_{\alpha\in \FF_{n_1}^+, |\alpha|=m+2}\|T_{1,\alpha}^*h\|^2\right)^{1/2}
 $$
 for any $h\in \cH$.
 Since ${\bf T}_1$ is a pure row contraction, we have
 $\lim_{m\to \infty}\sum_{\alpha\in \FF_{n_1}^+, |\alpha|=m+2}\|T_{1,\alpha}^*h\|^2=0$ for any $h\in \cH$.
Consequently, relation \eqref{rel} implies
\begin{equation*}
  \begin{split}
  \text{\rm diag}_{n_2}(\Delta_{{\bf T}_1'}) {\bf T}_2^*h
=A\Delta_{{\bf T}_1}h
+B\sum_{p=0}^\infty \text{\rm diag}_{n_1}\left(D\text{\rm diag}_{n_1}\left(\cdots
D\text{\rm diag}_{n_1}\left(C\Delta_{{\bf T}_1}\right) {\bf T}_1^*\cdots\right){\bf T}_1^*\right){\bf T}_1^*h,
 \end{split}
  \end{equation*}
for any $h\in \cH$, where $\text{\rm diag}_{n_1}$ appears $p+1$ times in the general term of  the series.
The proof is complete.
\end{proof}

\begin{lemma} \label{le2} If
$ U=\left[ \begin{matrix} A&B\\C&D \end{matrix}\right]
$
is  the matrix representation of a unitary extension  $U\in \cU_{\bf T}^\cK$, then
\begin{equation*}
\begin{split}
\text{\rm diag}_{n_1}\left(\text{\rm diag}_{n_1}\cdots\left(
\text{\rm diag}_{n_1}\left(C^*\right)
D^*\right)\cdots D^*\right) =
 \text{\rm diag}_{n_1}\left(\left[\begin{matrix}C^*D_{ \gamma}^*\\
 \vdots\\
 \gamma\in \FF_{n_1}^+, |\gamma|=q\end{matrix}\right]\right),
\end{split}
\end{equation*}
where $\text{\rm diag}_{n_1}$ appears  $q+1$ times on the left-hand side of the equality.
\end{lemma}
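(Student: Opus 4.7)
The proof is a clean induction on $q$, essentially dual to the computation done in Lemma \ref{le1}. Let me set up notation first. Write $D=[D_1,\ldots,D_{n_1}]$ so that $D^*=\left[\begin{smallmatrix}D_1^*\\ \vdots\\ D_{n_1}^*\end{smallmatrix}\right]$ is a column of operators, and for $\gamma=g_{i_1}\cdots g_{i_q}\in\FF_{n_1}^+$ set $D_\gamma:=D_{i_1}\cdots D_{i_q}$, so that $D_\gamma^*=D_{i_q}^*\cdots D_{i_1}^*$. Let $L_q$ denote the left-hand side of the claimed identity, so that $L_0=\text{\rm diag}_{n_1}(C^*)$ and the recursion
\[
L_q=\text{\rm diag}_{n_1}\bigl(L_{q-1}\,D^*\bigr),\qquad q\ge 1,
\]
is immediate from the way the nested $\text{\rm diag}_{n_1}$'s are built up. Also let
\[
F_q:=\left[\begin{matrix}C^*D_\gamma^*\\ \vdots\\ \gamma\in \FF_{n_1}^+,\ |\gamma|=q\end{matrix}\right]
\]
be the operator column indexed in lexicographic order, so that the right-hand side of the statement is $\text{\rm diag}_{n_1}(F_q)$.

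The plan is to prove $L_q=\text{\rm diag}_{n_1}(F_q)$ by induction on $q$. For $q=0$ we have $F_0=[C^*]$ since the only word of length zero is $g_0$ and $D_{g_0}^*=I$, so $L_0=\text{\rm diag}_{n_1}(C^*)=\text{\rm diag}_{n_1}(F_0)$. For the inductive step, assume $L_{q-1}=\text{\rm diag}_{n_1}(F_{q-1})$. Using the recursion for $L_q$, we must compute $\text{\rm diag}_{n_1}(F_{q-1})\,D^*$. A block computation shows that for any vector $v$ in the domain of $D^*$, its image under $\text{\rm diag}_{n_1}(F_{q-1})\,D^*$ sits in the slot indexed by the pair $(j,\gamma)$ (with $j\in\{1,\ldots,n_1\}$ the outer block index and $|\gamma|=q-1$ the inner index) and equals
\[
C^*D_\gamma^* D_j^* v \;=\; C^*\,(D_jD_\gamma)^* v \;=\; C^*\,D_{g_j\gamma}^* v.
\]

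The only point that requires attention is the bookkeeping of indices: the compound index $(j,\gamma)$ runs through $\{1,\ldots,n_1\}\times\{|\gamma|=q-1\}$ with $j$ in the outer position, which under the identification $(j,\gamma)\leftrightarrow g_j\gamma$ is precisely the lexicographic enumeration of words of length $q$ in $\FF_{n_1}^+$ (first letter first). Consequently $\text{\rm diag}_{n_1}(F_{q-1})\,D^*=F_q$, and applying one more $\text{\rm diag}_{n_1}$ yields
\[
L_q=\text{\rm diag}_{n_1}(F_q),
\]
which is the right-hand side of the lemma. The induction is complete.

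I expect no serious obstacle here; the content is purely bookkeeping. The one place where a careless reader could slip is making sure that the lexicographic ordering on $\FF_{n_1}^+$ of length $q$ words is consistent with prepending the outer index $j$ arising from the final $\text{\rm diag}_{n_1}$, which is exactly the "first letter first" convention fixed in the paper before Lemma \ref{le1}.
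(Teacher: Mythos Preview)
Your proof is correct and follows exactly the approach the paper indicates: an induction on $q$ parallel to the argument in Lemma~\ref{le1}. The paper's own proof simply reads ``As in Lemma~\ref{le1}, an induction argument over $q$ can be used to complete the proof,'' so you have faithfully supplied the details the paper leaves to the reader, including the key bookkeeping that the compound index $(j,\gamma)\leftrightarrow g_j\gamma$ respects the lexicographic order.
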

\begin{proof}  As in Lemma \ref{le1}, an induction argument  over $q$ can be used to complete the proof.
\end{proof}

Let $\cH$, $\cH'$, and $\cE$ be Hilbert spaces and
consider
$$ U=\left[ \begin{matrix} A&B\\C&D \end{matrix}\right]:\begin{matrix}\cH\\\oplus \\ \bigoplus_{j=1}^{n_1}\cE\end{matrix}\to  \begin{matrix} \bigoplus_{j=1}^{n_2}\cH'\\\oplus \\\cE\end{matrix}
$$
to be any unitary operator. Set
$$D=[D_1,\ldots, D_{n_1}]: \bigoplus_{j=1}^{n_1}\cE\to \cE
$$
and let ${\bf R}:=[R_{1,1},\ldots, R_{1,n_1}]$ be the tuple of right creation operators on the full Fock space $F^2(H_{n_1})$.
We associate with $U^*$  and any $r\in [0,1)$ the operator
$\varphi_{U^*}(r{\bf R})$ defined by
\begin{equation*}
\begin{split}
\varphi_{U^*}(r{\bf R}):= I_{F^2(H_{n_1})}\otimes A^*&+\left(I_{F^2(H_{n_1})}\otimes C^*\right) \left(I_{F^2(H_{n_1})\otimes \cE}-\sum_{j=1}^{n_1}r R_{1,j}\otimes D_j^*\right)^{-1}\\
& \times \left[r R_{1,1}\otimes I_{\cE},\ldots,
rR_{1,n_1}\otimes I_{\cE}\right]\left(I_{F^2(H_{n_1})}\otimes B^*\right).
\end{split}
\end{equation*}
We will see the  proof of the next lemma that the operator $\varphi_{U^*}(r{\bf R})$ is well-defined.
In what follows we use the notations: ${\bf A}:=I_{F^2(H_{n_1})}\otimes A$, ${\bf B}:=I_{F^2(H_{n_1})}\otimes B$,
 ${\bf C}:=I_{F^2(H_{n_1})}\otimes C$, ${\bf D}:=I_{F^2(H_{n_1})}\otimes D$,
    and
 ${\bf \Gamma}:=\left[ R_{1,1}\otimes I_{\cE},\ldots,
R_{1,n_1}\otimes I_{\cE}\right]$.

\begin{lemma} \label{strong-limit} The strong operator topology limit
$$\varphi_{U^*}({\bf R}):=\text{\rm SOT-}\lim_{r\to 1}\varphi_{U^*}(r{\bf R})
$$
 exists and defines a contractive  multi-analytic  operator
 $\varphi_{U^*}({\bf R})=(\varphi_1({\bf R}),\ldots, \varphi_{n_2}({\bf R}))$ with  $\varphi_j({\bf R})\in \cR_{n_1}^\infty\bar\otimes B\left( \cH',\cH\right)$, where $\cR_{n_1}^\infty $ is the noncommutative analytic Toeplitz  algebra generated by the right creation operators $R_{1,1},\ldots, R_{1,n_1}$  on the full Fock space $F^2(H_{n_1})$ and the identity.

Moreover,
$\varphi_{U^*}({\bf R})$ is an isometry if and only if
\begin{equation*}
\lim_{r\to 1}(1-r^2)\|(I-r{\bf   {\bf D}^*\Gamma})^{-1}{\bf B}^*x\|^2=0, \qquad    x\in F^2(H_{n_1})\otimes \bigoplus_{j=1}^{n_2}\cH'.
\end{equation*}
\end{lemma}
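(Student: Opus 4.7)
The plan is to view $\varphi_{U^*}(r{\bf R})$ as the transfer function of the unitary colligation $I\otimes U^*$ evaluated at the strict row contraction $r{\bf\Gamma}$, expand it by a Neumann series to identify its Fourier coefficients, and derive an energy identity from unitarity that simultaneously gives contractivity, existence of the SOT limit, and the isometry criterion.

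First, since $U$ is unitary the row $D=[D_1,\ldots,D_{n_1}]$ is a contraction and ${\bf\Gamma}$ is an isometry (because $R_{1,j}^*R_{1,k}=\delta_{jk}I$), so $\|r{\bf\Gamma}{\bf D}^*\|\le r<1$. The Neumann series $(I-r{\bf\Gamma}{\bf D}^*)^{-1}=\sum_{k\ge 0}(r{\bf\Gamma}{\bf D}^*)^k$ converges in norm, and inserting it into the definition of $\varphi_{U^*}(r{\bf R})$ and grouping terms by the length of the word $j_1\cdots j_{k+1}\in\FF_{n_1}^+$ produces
\begin{equation*}
\varphi_{U^*}(r{\bf R})=\sum_{\alpha\in\FF_{n_1}^+}r^{|\alpha|}\,R_{1,\alpha}\otimes\theta_{(\alpha)},
\end{equation*}
where $\theta_{(g_0)}=A^*$ and, for $|\alpha|\ge 1$, each $\theta_{(\alpha)}$ is an explicit product of $C^*$, several $D_j^*$'s, and one $B_j^*$; importantly, the coefficients are independent of $r$.

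Next comes the energy identity. For $x\in F^2(H_{n_1})\otimes\bigoplus_{j=1}^{n_2}\cH'$, set $u:=(I-r{\bf D}^*{\bf\Gamma})^{-1}{\bf B}^*x$ and $z:=r{\bf\Gamma}u$. The resolvent identity $(I-r{\bf\Gamma}{\bf D}^*)^{-1}(r{\bf\Gamma})=(r{\bf\Gamma})(I-r{\bf D}^*{\bf\Gamma})^{-1}$, together with the relation $u=r{\bf D}^*{\bf\Gamma}u+{\bf B}^*x$, gives $\varphi_{U^*}(r{\bf R})x={\bf A}^*x+{\bf C}^*z$ and $u={\bf B}^*x+{\bf D}^*z$. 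Applying unitarity of $I\otimes U^*$ to the pair $(x,z)$ yields $\|{\bf A}^*x+{\bf C}^*z\|^2+\|{\bf B}^*x+{\bf D}^*z\|^2=\|x\|^2+\|z\|^2$, and since ${\bf\Gamma}$ is an isometry $\|z\|^2=r^2\|u\|^2$, whence
\begin{equation*}
\|\varphi_{U^*}(r{\bf R})x\|^2=\|x\|^2-(1-r^2)\bigl\|(I-r{\bf D}^*{\bf\Gamma})^{-1}{\bf B}^*x\bigr\|^2.
\end{equation*}
In particular $\|\varphi_{U^*}(r{\bf R})\|\le 1$ uniformly in $r\in[0,1)$.

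Finally, testing the Fourier expansion on a vacuum vector $1\otimes h'$ and using orthonormality of $\{e_\alpha\}$ gives $\sum_\alpha r^{2|\alpha|}\|\theta_{(\alpha)}h'\|^2\le\|h'\|^2$; letting $r\to 1$ yields $\sum_\alpha\theta_{(\alpha)}^*\theta_{(\alpha)}\le I$. By the theory of multi-analytic operators recalled earlier in the excerpt, $\sum_\alpha R_{1,\alpha}\otimes\theta_{(\alpha)}$ is then the Fourier expansion of a unique contractive multi-analytic operator $\varphi_{U^*}({\bf R})\in\cR_{n_1}^\infty\bar\otimes B(\bigoplus_{j=1}^{n_2}\cH',\cH)$ (with components $\varphi_j({\bf R})\in\cR_{n_1}^\infty\bar\otimes B(\cH',\cH)$) whose Abel sum converges to it in the strong operator topology, i.e.\ $\varphi_{U^*}({\bf R})=\text{SOT-}\lim_{r\to 1}\varphi_{U^*}(r{\bf R})$. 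Since SOT convergence implies $\|\varphi_{U^*}(r{\bf R})x\|\to\|\varphi_{U^*}({\bf R})x\|$, the energy identity gives $\|\varphi_{U^*}({\bf R})x\|^2=\|x\|^2-\lim_{r\to 1}(1-r^2)\|(I-r{\bf D}^*{\bf\Gamma})^{-1}{\bf B}^*x\|^2$, so $\varphi_{U^*}({\bf R})$ is an isometry precisely when this last limit vanishes for every such $x$. The main obstacle is the careful bookkeeping in the energy identity—tracking which space each of ${\bf A},{\bf B},{\bf C},{\bf D},{\bf\Gamma}$ acts on and using the resolvent identity to reconcile the two inverses $(I-r{\bf\Gamma}{\bf D}^*)^{-1}$ and $(I-r{\bf D}^*{\bf\Gamma})^{-1}$—once this is in hand the rest is direct manipulation plus an appeal to the Fock-space theory already recalled in the excerpt.
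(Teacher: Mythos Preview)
Your proof is correct and follows essentially the same route as the paper. Both expand $\varphi_{U^*}(r{\bf R})$ by a Neumann series to obtain its Fourier representation with $r$-independent coefficients, establish uniform contractivity from the unitarity of the colligation, and read off the isometry criterion from the resulting defect identity. The paper records two identities,
\[
I-\varphi_{U^*}(r{\bf R})\varphi_{U^*}(r{\bf R})^*=(1-r^2){\bf C}^*(I-r{\bf\Gamma}{\bf D}^*)^{-1}(I-r{\bf D}{\bf\Gamma}^*)^{-1}{\bf C}
\]
and
\[
I-\varphi_{U^*}(r{\bf R})^*\varphi_{U^*}(r{\bf R})=(1-r^2){\bf B}(I-r{\bf\Gamma}^*{\bf D})^{-1}(I-r{\bf D}^*{\bf\Gamma})^{-1}{\bf B}^*,
\]
citing ``standard calculations'' and a reference; your energy identity is precisely the second of these evaluated at a vector $x$, and you supply an explicit derivation via the transfer-function/colligation viewpoint, which is a nice touch. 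You also economize slightly by using this single identity for both contractivity and the isometry criterion, whereas the paper uses the first for contractivity and the second for the criterion.

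One caution on phrasing: the coefficient bound $\sum_\alpha\theta_{(\alpha)}^*\theta_{(\alpha)}\le I$ is \emph{not} by itself sufficient to conclude that the formal series is the Fourier expansion of a bounded (let alone contractive) multi-analytic operator---already when $n_1=1$ and the $\theta_{(\alpha)}$ are scalars this would assert that an $\ell^2$ bound on Taylor coefficients implies an $H^\infty$ bound, which is false. What actually carries the argument is the uniform bound $\sup_{r<1}\|\varphi_{U^*}(r{\bf R})\|\le 1$ that you already established from the energy identity: the coefficient estimate then gives norm convergence of $\varphi_{U^*}(r{\bf R})(e_\beta\otimes h')$ on basis vectors, and uniform boundedness extends this to SOT convergence on the whole space, with the limit automatically contractive and multi-analytic. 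The paper is equally terse at this step (``one can see that\ldots''), so this is a matter of precision in the write-up rather than a genuine gap in the mathematics.
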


\begin{proof} First, we remark that  $D$ is a contraction,  $R_{1,1},\ldots, R_{1,n_1}$ are isometries with orthogonal ranges, and  $\left\|\sum_{j=1}^{n_1}r R_{1,j}\otimes D_j^*\right\|\leq r<1$. Consequently, the operator
$I_{F^2(H_{n_1})\otimes \cE}-\sum_{j=1}^{n_1}r R_{1,j}\otimes D_j^*$  is invertible  and
$\varphi_{U^*}(r{\bf R})=(\varphi_1(r{\bf R}),\ldots, \varphi_{n_2}(r{\bf R}))$ with    $\varphi_j(r{\bf R})\in \cR_{n_1}\bar\otimes B\left( \cH',\cH\right)$,
where $\cR_{n_1}$ is the noncommutative disc algebra generated by the right creation operators $R_{1,1},\ldots, R_{1,n_1}$ and the identity.
Note also that
\begin{equation}\label{fi-rep}
\varphi_j(r{\bf R})=\sum_{k=0}^\infty \sum_{\alpha\in \FF_{n_1}^+, |\alpha|=k} r^{|\alpha|} R_{1,\alpha}\otimes \Theta_{(\alpha)}^{(j)}
\end{equation}
for some operators $\Theta_{(\alpha)}^{(j)}\in B(\cH',\cH)$, where the convergence is in the operator norm topology.

On the other hand, since $ \left[ \begin{matrix} {\bf A}^*&{\bf C}^*\\{\bf B}^*&{\bf  D}^* \end{matrix}\right]
$
is a unitary operator, standard calculations (see e.g. \cite{Po-varieties})    show that
\begin{equation*}
\begin{split}
I-\varphi_{U^*}(r{\bf R})\varphi_{U^*}(r{\bf R})^*&= (1-r^2){\bf C}^*(I-r{\bf  \Gamma} {\bf D}^*)^{-1} (I-r{\bf   {\bf D}\Gamma}^*)^{-1}{\bf C} \\
I-\varphi_{U^*}(r{\bf R})^*\varphi_{U^*}(r{\bf R})&= (1-r^2){\bf B}(I-r{\bf  \Gamma}^* {\bf D})^{-1} (I-r{\bf   {\bf D}^*\Gamma})^{-1}{\bf B}^*.
\end{split}
\end{equation*}
The first of the relations above   shows that $\varphi_{U^*}(r{\bf R})$ is a contraction for any $r\in [0,1)$. Consequently, since  $\varphi_{U^*}(r{\bf R})=(\varphi_1(r{\bf R}),\ldots, \varphi_{n_2}(r{\bf R}))$ and  $\varphi_j(r{\bf R})\in \cR_{n_1}\bar\otimes B\left(\cH',\cH\right)
$
 has the  Fourier representation \eqref{fi-rep}, one can see
 that
  $\varphi_j({\bf R}):=\text{\rm SOT-}\lim_{r\to 1}\varphi_j(r{\bf R})$ exists,
  $\varphi_j({\bf R})\in \cR_{n_1}^\infty\bar\otimes B\left(\cH',\cH\right)$,  and $\|\varphi_{U^*}({\bf R})\|\leq 1$.

Since $\varphi_{U^*}({\bf R})=\text{\rm SOT-}\lim_{r\to 1}\varphi_{U^*}(r{\bf R})$, the second of the relations above shows that
$\varphi_{U^*}({\bf R})$ is an isometry if and only if
\begin{equation} \label{cond-iso}
\lim_{r\to 1}(1-r^2)\|(I-r{\bf   {\bf D}^*\Gamma})^{-1}{\bf B}^*x\|^2=0, \qquad x\in F^2(H_{n_1})\otimes \bigoplus_{j=1}^{n_2} \cH'.
\end{equation}
The proof is complete.
\end{proof}

 One of the main results of this section is the following  intertwining  dilation theorem for  the elements of $\cI({\bf T}_1,{\bf T}_1')$, the set of all intertwining row contractions of ${\bf T}_1$ and ${\bf T}_1'$.

\begin{theorem} \label{dil}  Let
  ${\bf T}_1:=(T_{1,1},\ldots, T_{1,n_1})$, ${\bf T}_1':=(T_{1,1}',\ldots, T_{1,n_1}')$, and ${\bf T}_2:=(T_{2,1},\ldots, T_{2,n_2})$ be row contractions such that ${\bf T}_2\in \cI({\bf T}_1,{\bf T}_1')$, and let ${\bf S}:=[S_{1,1},\ldots, S_{1,n_1}]$ and ${\bf R}:=[R_{1,1},\ldots, R_{1,n_1}]$ be the creation operators on the full Fock space $F^2(H_{n_1})$.
If
$\varphi_{U^*}({\bf R})=(\varphi_1({\bf R}),\ldots, \varphi_{n_2}({\bf R}))$ is the  multi-analytic operator associated with $U\in \cU_{\bf T}^\cK$ and
   ${\bf T}_1$ is a  pure row contraction, then the following relations hold:
$$
K_{{\bf T}_1'} T_{2,j}^* =\varphi_j({\bf R})^*K_{{\bf T}_1}, \qquad j\in\{1,\ldots, n_2\}
$$
and
$$
K_{{\bf T}_1}T_{1,i}^*=
\left(S_{1,i}^*\otimes I_{\cD_{{\bf T}_1}}\right)  K_{{\bf T}_1},\quad
K_{{\bf T}_1'}(T_{1,i}')^*=
\left(S_{1,i}^*\otimes I_{\cD_{{\bf T}_1'}}\right)  K_{{\bf T}_1'}, \qquad i\in \{1,\ldots, n_1\},
$$
 where  $K_{{\bf T}_1}$ and $K_{{\bf T}_1'}$ are the   noncommutative Poisson kernels associated with ${\bf T}_1$ and ${\bf T}_1'$, respectively.
\end{theorem}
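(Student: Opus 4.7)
The plan is to handle the three intertwining relations in turn. The identities $K_{{\bf T}_1} T_{1,i}^* = (S_{1,i}^* \otimes I_{\cD_{{\bf T}_1}}) K_{{\bf T}_1}$ and $K_{{\bf T}_1'} (T_{1,i}')^* = (S_{1,i}^* \otimes I_{\cD_{{\bf T}_1'}}) K_{{\bf T}_1'}$ are instances of the standard noncommutative Poisson-kernel intertwining property (at $r=1$) already recorded in the preliminaries, so the substance of the theorem is the main relation $K_{{\bf T}_1'} T_{2,j}^* = \varphi_j({\bf R})^* K_{{\bf T}_1}$ for $j=1,\ldots,n_2$.

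First, I would combine these $n_2$ equations into a single column identity $\operatorname{col}_j(K_{{\bf T}_1'} T_{2,j}^*) = \varphi_{U^*}({\bf R})^* K_{{\bf T}_1}$ and reduce it to a statement about vacuum projections. The reduction rests on the following observation: if $X:\cH \to F^2(H_{n_1}) \otimes \cL$ satisfies $(S_{1,i}^* \otimes I)X = X T_{1,i}^*$ for every $i$, then $X$ is uniquely determined by its vacuum projection $P_\CC X:\cH \to \cL$ via $Xh = \sum_\alpha e_\alpha \otimes (P_\CC X) T_{1,\alpha}^* h$, where $P_\CC$ is the projection onto the vacuum factor. Both sides of the desired identity satisfy this intertwining: the left-hand side by combining the standard Poisson intertwining for ${\bf T}_1'$ with $(T_{1,i}')^* T_{2,j}^* = T_{2,j}^* T_{1,i}^*$, and the right-hand side by multi-analyticity of $\varphi_j({\bf R})$ (so $(S_{1,i}^* \otimes I)\varphi_j({\bf R})^* = \varphi_j({\bf R})^*(S_{1,i}^* \otimes I)$) together with the Poisson intertwining for ${\bf T}_1$. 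Consequently the theorem reduces to
$$P_\CC\,\varphi_{U^*}({\bf R})^* K_{{\bf T}_1} = \operatorname{diag}_{n_2}(\Delta_{{\bf T}_1'}){\bf T}_2^*,$$
and the right-hand side is precisely the expression that Lemma \ref{series} already computes.

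To finish, I would compute the vacuum projection using $\varphi_{U^*}({\bf R})^* = \text{SOT-}\lim_{r\to 1}\varphi_{U^*}(r{\bf R})^*$; applied to the fixed vector $K_{{\bf T}_1} h$ this becomes a norm limit. For $r\in[0,1)$ the adjoint has the norm-convergent series
$$\varphi_{U^*}(r{\bf R})^* = {\bf A} + {\bf B}\,\operatorname{col}(rR_{1,j}^* \otimes I)_j \sum_{p\ge 0} r^p \Bigl(\sum_{j=1}^{n_1} R_{1,j}^* \otimes D_j\Bigr)^p {\bf C}.$$
I would apply this to the Fourier expansion $K_{{\bf T}_1} h = \sum_\alpha e_\alpha \otimes \Delta_{{\bf T}_1} T_{1,\alpha}^* h$, use that each $R_{1,j}^* \otimes D_j$ strips a letter $g_j$ from the right of $e_\alpha$ and multiplies the coefficient by $D_j$, and then apply $P_\CC$ to extract the vacuum coefficient. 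The resulting expression, indexed by words $\alpha \in \FF_{n_1}^+$ and organized as $A\Delta_{{\bf T}_1} h$ plus sums of the form $B_{\bullet,\alpha_1} D_{\alpha_2\cdots\alpha_{p+1}} C \Delta_{{\bf T}_1} T_{1,\alpha}^* h$, matches (at $r=1$) the series produced by Lemma \ref{series} once Lemma \ref{le1} is invoked to collapse the nested $\operatorname{diag}_{n_1}(\cdots){\bf T}_1^*$ blocks into a column of $T_{1,\alpha}^*$ over words of fixed length. Letting $r\to 1$ then yields the identity.

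The principal obstacle will be reconciling the two word-indexing schemes: the power-series side assembles words by peeling letters off the right via $R_{1,j}^*$, while Lemma \ref{series} assembles them from the innermost ${\bf T}_1^*$ outward through the nested $\operatorname{diag}_{n_1}$ blocks. Lemma \ref{le1} is exactly the bridge between the two, so the care needed is in matching, at each fixed word length, which letter of $\alpha$ is stripped at which step against the $D$-products $D_{\alpha_2\cdots\alpha_{p+1}}$ produced by the collapse. Convergence is handled as in the proof of Lemma \ref{series}, where purity of ${\bf T}_1$ controls the tails uniformly in $r$.
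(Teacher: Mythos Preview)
Your argument is correct and arrives at the same identity as the paper, but the route is organized differently. The paper proves the relation by testing $\varphi_{U^*}({\bf R})^*K_{{\bf T}_1}h$ against an arbitrary basis vector $e_\alpha\otimes y$: it expands $\varphi_{U^*}({\bf R})(e_\alpha\otimes y)$ using the formula for $\varphi_{U^*}$, computes the inner product with the Fourier series of $K_{{\bf T}_1}h$, rewrites the resulting sums via Lemma~\ref{le2}, and then invokes Lemma~\ref{series} to recognize the outcome as $\langle \mathrm{diag}_{n_2}(\Delta_{{\bf T}_1'}){\bf T}_2^*T_{1,\alpha}^*h,y\rangle$; the comparison with $\langle K_{{\bf T}_1'}T_{2,j}^*h,e_\alpha\otimes y_j\rangle$ is then immediate from the definition of $K_{{\bf T}_1'}$ and the intertwining $T_{2,j}T_{1,i}'=T_{1,i}T_{2,j}$.

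Your reduction is more conceptual: you first observe that both sides satisfy the same $S_{1,i}^*$--$T_{1,i}^*$ intertwining relation, so any such map is determined by its vacuum component, and thus the whole theorem collapses to checking one coefficient rather than all of them. You then compute that single vacuum coefficient from the adjoint Neumann series for $\varphi_{U^*}(r{\bf R})^*$ and match it with Lemma~\ref{series} via Lemma~\ref{le1}. This bypasses Lemma~\ref{le2} entirely and makes transparent \emph{why} the identity extends from the vacuum level to every $e_\alpha$: it is forced by multi-analyticity on one side and the commutation ${\bf T}_2\in\cI({\bf T}_1,{\bf T}_1')$ on the other. The paper's approach, by contrast, verifies the same match at every $e_\alpha$ by brute force and only uses the intertwining at the very end. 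Both are complete; yours isolates the structural reason and is somewhat shorter, while the paper's direct test has the advantage of not needing to justify the uniqueness-from-vacuum principle separately.
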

\begin{proof} Fix $U=\left[ \begin{matrix} A&B\\C&D \end{matrix}\right]\in \cU_{\bf T}^\cK$.
We use the notations: ${\bf A}:=I_{F^2(H_{n_1})}\otimes A$, ${\bf B}:=I_{F^2(H_{n_1})}\otimes B$,
 ${\bf C}:=I_{F^2(H_{n_1})}\otimes C$,
 ${\bf Q}:=\sum_{j=1}^{n_1} R_{1,j}\otimes D_j^*$,  and
 ${\bf \Gamma}:=\left[ R_{1,1}\otimes I_{\cD_{{\bf T}_2}\oplus \cK},\ldots,
R_{1,n_1}\otimes I_{\cD_{{\bf T}_2}\oplus \cK}\right]$.
We associate with $U^*$   the  multi-analytic operator $\varphi_{U^*}({\bf R})\in \cR_{n_1}^\infty\otimes B\left(\bigoplus_{j=1}^{n_2}\cD_{{\bf T}_1'}, \cD_{{\bf T}_1}\right)$,  as in Lemma \ref{strong-limit}, in the particular case when $\cH=\cD_{{\bf T}_1}$, $\cH'=\cD_{{\bf T}_1'}$, and $\cE=\cD_{{\bf T}_2}\oplus \cK$.
Note that, for each $y\in \oplus_{j=1}^{n_2} \cD_{{\bf T}_1'}$ and $\alpha\in \FF_{n_1}^+$ with $|\alpha|=n$, we have

$$
\varphi_{U^*}({\bf R})(e_\alpha\otimes y)=e_\alpha\otimes A^*y+
\sum_{q=0}^\infty {\bf C}^*{\bf Q}^q {\bf \Gamma}{\bf B}^*(e_\alpha\otimes y)
$$
and ${\bf C}^*{\bf Q}^q {\bf \Gamma}{\bf B}^*(e_\alpha\otimes y)$ is in the closed linear span of all the vectors $e_{\alpha\sigma}\otimes z$, where $\sigma\in \FF_{n_1}^+, |\sigma|=q+1$ and $z\in \cD_{{\bf T}_1}$. Consequently, using the noncommutative Poisson kernel $K_{{\bf T}_1}$,  we  deduce that
\begin{equation*}
\begin{split}
\left<\varphi_{U^*}({\bf R})^*K_{{\bf T}_1}h, e_\alpha\otimes y\right>
&=\left< \sum_{k=0}^\infty \sum_{|\beta|=k}
  e_\beta\otimes \Delta_{{\bf T}_1}T_{1,\beta}^*h, \varphi_{U^*}({\bf R})(e_\alpha\otimes y)\right>\\
&=
\left<\Delta_{{\bf T}_1}T_{1,\alpha}^*h, A^*y\right>
+\sum_{q=0}^\infty \left< \sum_{|\sigma|=q+1}e_{\alpha\sigma}\otimes \Delta_{{\bf T}_1}T_{1,\sigma}^* T_{1,\alpha}^*h, {\bf C}^*{\bf Q}^q {\bf \Gamma}{\bf B}^*(e_\alpha\otimes y)
\right>,
\end{split}
\end{equation*}
for any $h\in \cH$,  $y\in \oplus_{j=1}^{n_2} \cD_{{\bf T}_1'}$ and $\alpha\in \FF_{n_1}^+$ with $|\alpha|=n$. If $\alpha=g_{i_1}\cdots g_{i_k}\in \FF_{n_1}^+$ , we denote by $\widetilde\alpha:=g_{i_k}\cdots g_{i_1}$ the reverse of $\alpha$.
Setting $$
D:=[D_1,\ldots, D_{n_1}]:\bigoplus_{j=1}^{n_1}(\cD_{{\bf T}_2}\oplus \cK)\to \cD_{{\bf T}_2}\oplus \cK$$ and
$$
B:=[B_1,\ldots, B_{n_1}]:\bigoplus_{j=1}^{n_1}(\cD_{{\bf T}_2}\oplus \cK)\to \bigoplus_{j=1}^{n_2}\cD_{{\bf T}_1'},
$$
we obtain
\begin{equation*}
\begin{split}
{\bf C}^*{\bf Q}^q {\bf \Gamma}{\bf B}^*(e_\alpha\otimes y)
&=
(I_{F^2(H_{n_1})}\otimes C^*)\left(\sum_{\sigma\in \FF_{n_1}^+,|\sigma|=q}
R_{1,\sigma}\otimes D^*_{\widetilde\sigma}\right)\\
&\qquad\qquad \qquad
\left[ R_{1,1}\otimes I_{\cD_{{\bf T}_2}\oplus \cK},\ldots,
R_{1,n_1}\otimes I_{\cD_{{\bf T}_2}\oplus \cK}\right]
\left[\begin{matrix} I_{F^2(H_{n_1})}\otimes B_1^*\\
\vdots\\I_{F^2(H_{n_1})}\otimes B_{n_1}^*\end{matrix}\right](e_\alpha\otimes y)\\
&=
\sum_{i=1}^{n_1} \sum_{\sigma\in \FF_{n_1}^+,|\sigma|=q}
R_{1,\sigma}R_{1,i} e_\alpha\otimes C^*D_{\widetilde\sigma}^*B_i^*y\\
&=
\sum_{i=1}^{n_1} \sum_{\sigma\in \FF_{n_1}^+,|\sigma|=q} e_{\alpha g_i\widetilde\sigma}\otimes C^*D_{\widetilde\sigma}^*B_i^*y,
\end{split}
\end{equation*}
where $\widetilde\sigma$ is the reverse of $\sigma\in \FF_{n_1}^+$.
Summing out  our findings, we deduce that
\begin{equation*}
\begin{split}
&\left<\varphi_{U^*}({\bf R})^*K_{{\bf T}_1}h, e_\alpha\otimes y\right>\\
&=
\left<\Delta_{{\bf T}_1}T_{1,\alpha}^*h, A^*y\right>+\sum_{q=0}^\infty \left< \sum_{|\sigma|=q+1}e_{\alpha\sigma}\otimes \Delta_{{\bf T}_1}T_{1,\sigma}^* T_{1,\alpha}^*h,
\sum_{i=1}^{n_1} \sum_{\gamma\in \FF_{n_1}^+,|\gamma|=q} e_{\alpha g_i\widetilde\gamma}\otimes C^*D_{\widetilde\gamma}^*B_i^*y
\right>\\
&=
\left<\Delta_{{\bf T}_1}T_{1,\alpha}^*h, A^*y\right>
+\sum_{q=0}^\infty\sum_{i=1}^{n_1}\sum_{\gamma\in \FF_{n_1}^+,|\gamma|=q}
\left< e_{\alpha g_i \widetilde\gamma}\otimes \Delta_{{\bf T}_1} T_{1, g_i\widetilde\gamma}^* T_{1,\alpha}^*h,
e_{\alpha g_i\widetilde\gamma}\otimes C^*D_{\widetilde\gamma}^*B_i^*y
\right>\\
&=
\left<\Delta_{{\bf T}_1}T_{1,\alpha}^*h, A^*y\right>+\sum_{q=0}^\infty\sum_{i=1}^{n_1}\sum_{\gamma\in \FF_{n_1}^+,|\gamma|=q}
\left<  \Delta_{{\bf T}_1} T_{1, g_i\gamma}^* T_{1,\alpha}^*h,
 C^*D_{\gamma}^*B_i^*y
\right>\\
&=
\left<\Delta_{{\bf T}_1}T_{1,\alpha}^*h, A^*y\right>
+
\sum_{q=0}^\infty\left< \left[\begin{matrix}\left[\begin{matrix} \Delta_{{\bf T}_1} T_{1, \gamma}^* T_{1,1}^*\\
\vdots\\|\gamma|=q\end{matrix}\right]T_{1,\alpha}^*h\\
\vdots \\
\left[\begin{matrix} \Delta_{{\bf T}_1} T_{1, \gamma}^* T_{1,n_1}^*\\
\vdots\\|\gamma|=q\end{matrix}\right]T_{1,\alpha}^*h
\end{matrix} \right],
\left[\begin{matrix}\left[\begin{matrix} C^* D_{\gamma}^*B_1^*y \\
\vdots\\|\gamma|=q\end{matrix}\right]\\
\vdots \\
\left[\begin{matrix} C^* D_{\gamma}^*B_{n_1}^*y\\
\vdots\\|\gamma|=q\end{matrix}\right]
\end{matrix} \right]
\right>\\
&=
\left<\Delta_{{\bf T}_1}T_{1,\alpha}^*h, A^*y\right> +
\sum_{q=0}^\infty\left<
\left[\begin{matrix} \Delta_{{\bf T}_1} T_{1, \omega}^* T_{1,\alpha}^*\\
\vdots\\|\omega|=q+1\end{matrix}\right]h,
\text{\rm diag}_{n_1}\left(\text{\rm diag}_{n_1}\cdots\left(
\text{\rm diag}_{n_1}\left(C^*\right)
D^*\right)\cdots D^*\right)B^*y
\right>\\
&=
\left<\Delta_{{\bf T}_1}T_{1,\alpha}^*h, A^*y\right> +
\left< B\sum_{p=0}^\infty \text{\rm diag}_{n_1}\left(D\text{\rm diag}_{n_1}\left(\cdots
D\text{\rm diag}_{n_1}\left(C\Delta_{{\bf T}_1}\right) {\bf T}_1^*\cdots\right){\bf T}_1^*\right){\bf T}_1^*T_{1,\alpha}^*h,y\right>.
\end{split}
\end{equation*}
The last two equalities are due to   Lemma \ref{le2}. Now, using  Lemma \ref{series}, we obtain
$$
\left<\varphi_{U^*}({\bf R})^*K_{{\bf T}_1}h, e_\alpha\otimes y\right>
=\left< \text{\rm diag}_{n_2}(\Delta_{{\bf T}_1'}){\bf T}_2^* T_{1,\alpha}^*h, y\right>
$$
for any $\alpha\in \FF_{n_1}^+$, $h\in \cH$,  and $y\in \oplus_{j=1}^{n_2}\cD_{{\bf T}_1'}$.
Hence, using the definition of the noncommutative Poisson kernel and the fact that ${\bf T}_2\in \cI({\bf T}_1,{\bf T}_1')$, we deduce that, for any $\alpha\in \FF_{n_1}^+$,  $h\in \cH$ and $y_j\in \cD_{{\bf T}_1'}$, $j=1,\ldots, n_2$,
\begin{equation*}
\begin{split}
\left<K_{{\bf T}_1'}T_{2,j}^*h, e_\alpha\otimes y_j\right>
&=\left<
\sum_{k=0}^\infty \sum_{\beta\in \FF_{n_1}^+|\beta|=k}  e_\beta\otimes
\Delta_{{\bf T}_1'} (T_{1,\beta}')^*T_{2,j}^*h,e_\alpha\otimes y_j\right>\\
&=
\left<\Delta_{{\bf T}_1'} (T_{1,\alpha}')^*T_{2,j}^*h,y_j\right>
=
\left<\Delta_{{\bf T}_1'} T_{2,j}^*T_{1,\alpha}^*h,y_j\right>\\
&=
\left<\varphi_{U^*}({\bf R})^*K_{{\bf T}_1}h, \left(\begin{matrix} 0\\ \vdots \\ e_\alpha\otimes y_j\\\vdots \\ 0\end{matrix}\right)\right>=
\left<\varphi_j({\bf R})^*K_{{\bf T}_1}h, e_\alpha\otimes y_j\right>.
\end{split}
\end{equation*}
Consequently,
$$
K_{{\bf T}_1'}T_{2,j}^*=\varphi_j({\bf R})^*K_{{\bf T}_1}, \qquad j=1,\ldots, n_2.
$$
   The last two relations in the theorem follow easily using the definition of the noncommutative Poisson kernes associated with   row contractions.
The proof is complete.
\end{proof}
We remark that, in Theorem \ref{dil},    if there is $U\in \cU_{\bf T}^\cK$ satisfying condition \eqref{cond-iso}, then $\varphi_{U^*}({\bf R})$ is  an isometry.
Here is an  example when condition \eqref{cond-iso} is satisfied. Another class of examples will be considered  later  in Theorem \ref{A-M2}.

Let $f=(f_1,\ldots, f_{n_1})$ be a free holomorphic automorphism of the unit ball $[B(\cH)^{n_1}]_1$. Then ${\bf T}_1:=(f_1({\bf S}),\ldots, f_{n_1}({\bf S}))$ is a pure row contraction and the noncommutative Poisson kernel $K_{{\bf T}_1}$ is a unitary operator on $F^2(H_{n_1})$ (see \cite{Po-automorphism}). Let ${\bf T}_2:=(\psi_1({\bf R}),\ldots \psi_{n_2}({\bf R}))$, with $\psi_j({\bf R})\in B(F^2(H_{n_1}))$,  be any multi-analytic  operator  which is  an isometry.
Applying Theorem \ref{dil}, when  $U\in \cU_{{\bf T}_1}^\cK$, we find  a multi-analytic operator
$\varphi_{U^*}({\bf R})=(\varphi_1({\bf R}),\ldots, \varphi_{n_2}({\bf R}))$ such that
$$
K_{{\bf T}_1} \psi_j({\bf R})^* =\varphi_j({\bf R})^*K_{{\bf T}_1}, \qquad j\in\{1,\ldots, n_2\}
$$
Since $K_{{\bf T}_1}$ is unitary, $\varphi_{U^*}({\bf R})$ is an isometry and, consequently, condition  \eqref{cond-iso} is satisfied.

It remains an open problem whether or not the set $\cU_{\bf T}^\cK$ always contains a unitary
operator $U$ such that $\varphi_{U^*}({\bf R})$ is an isometry ?

In what follows we recall a few facts concerning  constrained noncommutative Poisson kernels associated with noncommutative varieties in the unit ball $[B(\cH)^n]_1$ (see \cite{Po-varieties}).
Let $J$ be a WOT-closed two-sided ideal of $F_n^\infty$
such that $J\neq F_n^\infty$, and define the noncommutative variety $\cV_J(\cH)$ to be  the set of all pure row contractions ${\bf T}:=[T_1,\ldots, T_n]$, \ $T_i\in B(\cH)$, such that
$$
f(T_1,\ldots, T_n)=0\quad \text{ for any }\quad f\in J,
$$
where $f(T_1,\ldots, T_n)$ is defined using  the $F_n^\infty$-functional calculus for row contractions (see \cite{Po-funct}).
We proved in \cite{Po-varieties}  that there is a universal model  ${\bf B}:=[B_1,\ldots, B_n]$ associated with  $\cV_J(\cH)$, such that $f(B_1,\ldots, B_n)=0$ for any $f\in J$. More precisely, we showed that  an $n$-tuple
${\bf T}\in B(\cH)^n$ of operators   is in the noncommutative variety $\cV_J(\cH)$ if and only if $[T_1,\ldots, T_n]$ is unitarily equivalent to the compression  $[P_\cN (B_1\otimes I_\cD)|_{\cN}, \ldots, P_\cN (B_n\otimes I_\cD)|_{\cN}]$, where $\cD$ is some Hilbert space and  $\cN$ is
a co-invariant subspace of $B_1\otimes I_\cD, \ldots, B_n\otimes I_\cD$.

Let $J$ be a WOT-closed two-sided ideal of $F_n^\infty$
such that $J\neq F_n^\infty$, and define the subspaces of  the full Fock space $F^2(H_n)$ by setting
$$
\cM_J:=\overline{JF^2(H_n)}\quad \text{and}\quad \cN_J:=F^2(H_n)\ominus \cM_J.
$$
We proved in \cite{Po-varieties} that
  $\cN_J\neq \{0\}$ if and only if $ J\neq F_n^\infty$. One can easily see that the subspace $\cN_J$   is invariant under $S_1^*,\ldots, S_n^*$ and  $R_1^*,\ldots, R_n^*$.
 Define the {\it constrained  left} (resp.~{\it right}) {\it creation operators} by setting
$$B_i:=P_{\cN_J} S_i|\cN_J \quad \text{and}\quad W_i:=P_{\cN_J} R_i|\cN_J,\quad i=1,\ldots, n.
$$
and call ${\bf B}:=[B_1,\ldots, B_n]$ and ${\bf W}:=[W_1,\ldots, W_n]$  universal models  associated with the noncommutative variety $\cV_J$.

Let $F_n^\infty(\cV_J)$ be the $w^*$-closed algebra generated by
$B_1,\ldots, B_n$ and the identity. We proved in \cite{APo}
that  $F_n^\infty(\cV_J)$ has the $\mathbb A_1(1)$ property and therefore the $w^*$ and WOT topologies coincide on this algebra.
Similar results hold for $\cR_n^\infty(\cV_J)$, the $w^*$-closed algebra generated by
$W_1,\ldots, W_n$ and the identity. Moreover, we  proved in \cite{APo} that
$$
F_n^\infty(\cV_J)^\prime=\cR_n^\infty(\cV_J)\ \text{ and } \ \cR_n^\infty(\cV_J)^\prime=F_n^\infty(\cV_J),
$$
where $^\prime$ stands for the commutant.
An operator $M\in B(\cN_J\otimes \cK,\cN_J\otimes \cK')$ is called multi-analytic with respect to universal model ${\bf B}:=[B_1,\ldots, B_n]$
 if
$$
M(B_i\otimes I_{\cK})=(B_i\otimes I_{\cK'})M,\quad i=1,\ldots, n.
$$
We recall from \cite{Po-central} that the set of all multi-analytic  operators with respect to
 ${\bf B}$ coincides  with
$$
 \cR_n^\infty(\cV_J) \bar\otimes  B(\cK,\cK')=P_{\cN_J\otimes \cK'}[\cR_n^\infty\bar\otimes B(\cK,\cK')]|_{\cN_J\otimes \cK}.
$$
Similar results hold for  the algebra $F_n^\infty(\cV_J)$.

 We  remark that in the particular case when the ideal $J$ is generated by the polynomials
$S_iS_j-S_jS_i$, \ $i,j=1,\ldots, n$, then we have  $\cN_J=F^2_s$, the symmetric Fock space,    and $B_i:=P_{F_s^2} S_i|_{F_s^2}$, $i=1,\dots, n$, are the creation operators on the symmetric Fock space.
Arveson showed in  \cite{Arv2} that $F_s^2$  can be identified with his space $H^2$,
  of analytic functions in $\BB_n$, namely,
the
reproducing kernel Hilbert space
with reproducing kernel $K_n: \BB_n\times \BB_n\to \CC$ defined by
 $$
 K_n(z,w):= {\frac {1}
{1-\langle z, w\rangle_{\CC^n}}}, \qquad z,w\in \BB_n.
$$
 The algebra
$\cW_n^\infty:=P_{F_s^2} F_n^\infty|_{F_s^2}$,
    was    proved to be  the $w^*$-closed algebra generated by  the operators
   $B_i$, \ $i=1,\dots, n$, and the identity (see \cite{APo}).
     Moreover, Arveson showed in  \cite{Arv2}
   that $\cW_n^\infty$ can be identified with  the algebra of all  multipliers  of $H^2$.  Under this identification the creation operators $B_1,\ldots, B_n$ become the multiplication operators $M_{z_1},\ldots, M_{z_n}$ by the coordinate functions $z_1,\ldots, z_n$ of $\CC^n$.

Let $J\neq F_n^\infty$
be a WOT-closed two-sided ideal of $F_n^\infty$ and
let ${\bf T}:=[T_1,\ldots, T_n]$  be in the noncommutative variety $\cV_J(\cH)$.
The {\it constrained Poisson kernel} $K_{J,{\bf T}}:\cH\to \cN_J\otimes \cD_{\bf T}$ is defined by
$$K_{J,{\bf T}}:=(P_{\cN_J}\otimes I_{\cD_{\bf T}})K_{\bf T}
$$
is an isometry and satisfies the relation
\begin{equation*}
K_{J,{\bf T}}T_\alpha^*=(B_\alpha^*\otimes I_{\cD_{\bf T}})K_{J,{\bf T}},\quad  \alpha\in \FF_n^+.
\end{equation*}

The analogue of Theorem \ref{dil} on noncommutative varieties is the following. Recall that $\cU_{\bf T}^\cK$ is the set of all unitary extensions of the isometry defined by  relation \eqref{iso2}.

\begin{theorem} \label{dil-com}  Let
  ${\bf T}_1:=(T_{1,1},\ldots, T_{1,n_1})$ and  ${\bf T}_1':=(T_{1,1}',\ldots, T_{1,n_1}')$ be elements of the noncommutative varieties $\cV_J(\cH)$ and $\cV_J(\cH')$, respectively,  and let ${\bf T}_2:=(T_{2,1},\ldots, T_{2,n_2})$ be a row contraction such that ${\bf T}_2\in \cI({\bf T}_1,{\bf T}_1')$. Let ${\bf B}:=[B_{1,1},\ldots, B_{1, n_1}]$ and ${\bf W}:=[W_{1,1},\ldots, W_{1,n_1}]$ be the universal models associated with the vatiety $\cV_J$.
If $U\in \cU_{\bf T}^\cK$ and
$\varphi_{U^*}({\bf R})=(\varphi_1({\bf R}),\ldots, \varphi_{n_2}({\bf R}))$ is the associated  multi-analytic operator,    then the following relations hold:
$$
K_{J,{\bf T}_1'} T_{2,j}^* =\varphi_j({\bf W})^*K_{J,{\bf T}_1}, \qquad j\in\{1,\ldots, n_2\}
$$
and
$$
K_{J,{\bf T}_1}T_{1,i}^*=
\left(B_{1,i}^*\otimes I_{\cD_{{\bf T}_1}}\right)  K_{J,{\bf T}_1},\quad
K_{J,{\bf T}_1'}(T_{1,i}')^*=
\left(B_{1,i}^*\otimes I_{\cD_{{\bf T}_1'}}\right)  K_{J,{\bf T}_1'}, \qquad i\in \{1,\ldots, n_1\},
$$
 where  $K_{J,{\bf T}_1}$ and $K_{J,{\bf T}_1'}$ are the  constrained  Poisson kernels associated with ${\bf T}_1$ and ${\bf T}_1'$, respectively.

\end{theorem}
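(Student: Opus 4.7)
The plan is to bootstrap Theorem \ref{dil} via compression to the $\cN_J$-subspace. Since ${\bf T}_1\in \cV_J(\cH)$ and ${\bf T}_1'\in \cV_J(\cH')$ are, by definition of the noncommutative variety, pure row contractions, Theorem \ref{dil} applies directly and yields
$$
K_{{\bf T}_1'} T_{2,j}^* =\varphi_j({\bf R})^*K_{{\bf T}_1},\qquad
K_{{\bf T}_1}T_{1,i}^*=(S_{1,i}^*\otimes I_{\cD_{{\bf T}_1}})K_{{\bf T}_1},
$$
and the analogous identity for ${\bf T}_1'$. The last two equations will immediately transfer to the constrained setting using the recalled identity $K_{J,{\bf T}}T_\alpha^*=(B_\alpha^*\otimes I_{\cD_{\bf T}})K_{J,{\bf T}}$; this is nothing more than applying $P_{\cN_J}\otimes I$ on the left and using that $\cN_J$ is invariant under $S_{1,i}^*$, so that $P_{\cN_J}S_{1,i}^*=B_{1,i}^*P_{\cN_J}$. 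That disposes of the second block of identities in the statement.

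The substantive step is the intertwining relation. I would apply the projection $P_{\cN_J}\otimes I_{\cD_{{\bf T}_1'}}$ to both sides of $K_{{\bf T}_1'}T_{2,j}^*=\varphi_j({\bf R})^*K_{{\bf T}_1}$. The left-hand side becomes $K_{J,{\bf T}_1'}T_{2,j}^*$ by the very definition $K_{J,{\bf T}}:=(P_{\cN_J}\otimes I_{\cD_{\bf T}})K_{\bf T}$. For the right-hand side I would invoke the structure theorem for multi-analytic operators on varieties from \cite{Po-central} (recalled in the excerpt): the compression
$$
\varphi_j({\bf W}):=P_{\cN_J\otimes \cD_{{\bf T}_1}}\,\varphi_j({\bf R})\big|_{\cN_J\otimes \cD_{{\bf T}_1'}}
$$
is a multi-analytic operator with respect to the constrained universal model ${\bf B}$, and its adjoint is
$\varphi_j({\bf W})^{*}=P_{\cN_J\otimes \cD_{{\bf T}_1'}}\,\varphi_j({\bf R})^{*}\big|_{\cN_J\otimes \cD_{{\bf T}_1}}$.

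To exploit this, I would use the key observation that the range of $K_{{\bf T}_1}$ is already contained in $\cN_J\otimes \cD_{{\bf T}_1}$ (this is the content of the fact that ${\bf T}_1\in \cV_J$ admits a constrained Poisson kernel), so $K_{{\bf T}_1}h$ can be identified with $K_{J,{\bf T}_1}h$ as an element of $\cN_J\otimes \cD_{{\bf T}_1}$. Then
$$
(P_{\cN_J}\otimes I)\,\varphi_j({\bf R})^{*}K_{{\bf T}_1}h
=P_{\cN_J\otimes \cD_{{\bf T}_1'}}\,\varphi_j({\bf R})^{*}K_{J,{\bf T}_1}h
=\varphi_j({\bf W})^{*}K_{J,{\bf T}_1}h,
$$
which is exactly what is required.

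The main conceptual point, and the step that deserves care, is the passage from $\varphi_j({\bf R})$ to $\varphi_j({\bf W})$: one must justify that the compression to the co-invariant subspace $\cN_J$ gives precisely the multi-analytic operator named $\varphi_j({\bf W})$ in the statement. This is delivered by the identification $\cR_{n_1}^\infty(\cV_J)\bar\otimes B(\cK,\cK')=P_{\cN_J\otimes \cK'}\bigl[\cR_{n_1}^\infty\bar\otimes B(\cK,\cK')\bigr]\big|_{\cN_J\otimes \cK}$ quoted in the excerpt from \cite{Po-central}, which guarantees both that the compressed object lies in the correct class and that its adjoint is the compression of the adjoint. Once this identification is invoked, the remaining computation is essentially automatic, and the theorem follows.
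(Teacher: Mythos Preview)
Your proposal is correct and follows essentially the same route as the paper: apply Theorem~\ref{dil} to get the unconstrained intertwining relation, use that the ranges of $K_{{\bf T}_1}$ and $K_{{\bf T}_1'}$ already lie in $\cN_J\otimes\cD_{{\bf T}_1}$ and $\cN_J\otimes\cD_{{\bf T}_1'}$, and then compress using the co-invariance of $\cN_J$ under the right creation operators to identify $\varphi_j({\bf W})$ as $P_{\cN_J\otimes\cD_{{\bf T}_1}}\varphi_j({\bf R})|_{\cN_J\otimes\cD_{{\bf T}_1'}}$. The only cosmetic difference is that the paper states the co-invariance of $\cN_J$ under $R_{1,1},\ldots,R_{1,n_1}$ directly to justify $\varphi_j({\bf R})^*(\cN_J\otimes\cD_{{\bf T}_1})\subset\cN_J\otimes\cD_{{\bf T}_1'}$, whereas you package the same fact through the compression identity from \cite{Po-central}.
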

\begin{proof} Since ${\bf T}_1\in \cV_J(\cH)$ and ${\bf T}_1'\in \cV_J(\cH)$,
the noncommutative Poisson kernels $K_{{\bf T}_1}$ and $K_{{\bf T}_1'}$ have ranges in $\cN_J\otimes  \cD_{{\bf T}_1}$ and $\cN_J\otimes  \cD_{{\bf T}_1'}$, respectively.
Due to Theorem \ref{dil}, we have
\begin{equation}\label{INT}
K_{{\bf T}_1'} T_{2,j}^* =\varphi_j({\bf R})^*K_{{\bf T}_1}, \qquad j\in\{1,\ldots, n_2\},
\end{equation}
where ${\bf R}:=[R_{1,1},\ldots, R_{1, n_1}]$ is the tuple of right creation operators on the full Fock space $F^2(H_{n_1})$.
Since $\cN_J$ is co-invariant under $R_{1,1},\ldots, R_{1,n_1}$, we have
$$
\varphi_j({\bf R})^*(\cN_J\otimes  \cD_{{\bf T}_1})\subset \cN_J\otimes  \cD_{{\bf T}_1'}, \qquad j\in\{1,\ldots, n_2\},
$$
and
$$
 \varphi_j({\bf W})=P_{\cN_J\otimes  \cD_{{\bf T}_1}} \varphi_j({\bf R})|_{\cN_J\otimes  \cD_{{\bf T}_1'}}, \qquad j\in\{1,\ldots, n_2\},
 $$
 is a multi-analytic operator with respect to the universal model ${\bf B}$. Note that relation \eqref{INT} implies
 $$
 P_{\cN_J\otimes  \cD_{{\bf T}_1'}}K_{{\bf T}_1'} T_{2,j}^* =P_{\cN_J\otimes  \cD_{{\bf T}_1'}}\varphi_j({\bf R})^*P_{\cN_J\otimes  \cD_{{\bf T}_1}}K_{{\bf T}_1}, \qquad j\in\{1,\ldots, n_2\},
 $$
 which  proves that
 $$
 K_{J,{\bf T}_1'} T_{2,j}^* =\varphi_j({\bf W})^*K_{J,{\bf T}_1}, \qquad j\in\{1,\ldots, n_2\}.
 $$
 Since the other relations are known, the proof is complete.
 \end{proof}

As a consequence of Theorem \ref{dil}, we can obtain a new proof for the commutant lifting theorem for pure row contractions (\cite{S}, \cite{Po-isometric}) as well as a  new constructive method to obtain the lifting.

\begin{theorem} \label{CLT}
Let ${\bf T}=(T_1,\ldots, T_n)\in B(\cH)^n$ and  ${\bf T}'=(T_1',\ldots, T_n')\in B(\cH')^n$ be pure row contractions and let $A:\cH'\to \cH$ be an operator such that
$$AT_i'=T_iA, \qquad i\in\{1,\ldots, n\}.
 $$
 Let  ${\bf V}=(V_1,\ldots, V_n)\in B(\cK)^n$
and ${\bf V}'=(V_1',\ldots, V_n')\in B(\cK')^n$ be the minimal isometric dilations of ${\bf T}$  and ${\bf T}'$ on some Hilbert spaces $\cK\supset \cH$ and $\cK'\supset \cH'$, respectively.
 Then there is an operator $B:\cK'\to \cK$ such that
$$
BV_i'=V_iB, \qquad
i\in \{1,\ldots, n\},
$$
 $B^*|_\cH=A^*$, and $\|B\|=\|A\|$.
\end{theorem}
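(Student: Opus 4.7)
The plan is to deduce the theorem as a direct consequence of Theorem \ref{dil}, applied with $n_1:=n$, $n_2:=1$, ${\bf T}_1:={\bf T}$, ${\bf T}_1':={\bf T}'$, and ${\bf T}_2:=(A)$. If $A=0$ the result is trivial, so I may assume $A\neq 0$ and, after normalizing and rescaling at the end, take $\|A\|\le 1$, so that ${\bf T}_2$ is a genuine row contraction. The intertwining hypothesis $AT_i'=T_iA$ is precisely the statement that ${\bf T}_2\in \cI({\bf T}_1,{\bf T}_1')$, and ${\bf T}$ is pure by assumption, so Theorem \ref{dil} applies. I choose an infinite-dimensional Hilbert space $\cK$ and any $U\in \cU_{\bf T}^\cK$; such a $U$ exists by a standard dimension-matching extension of the isometry \eqref{iso2} to a unitary between the (infinite-dimensional) spaces displayed there.

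Theorem \ref{dil} then furnishes a contractive multi-analytic operator $\varphi_1({\bf R})\in \cR_n^\infty\bar\otimes B(\cD_{{\bf T}'},\cD_{\bf T})$ satisfying
\begin{equation*}
K_{{\bf T}'}A^{*}=\varphi_1({\bf R})^{*}K_{\bf T}.
\end{equation*}
Since ${\bf T}$ and ${\bf T}'$ are pure, their noncommutative Poisson kernels $K_{\bf T}\colon \cH\to F^2(H_n)\otimes \cD_{\bf T}$ and $K_{{\bf T}'}\colon \cH'\to F^2(H_n)\otimes \cD_{{\bf T}'}$ are isometries and intertwine ${\bf T}$ (resp.~${\bf T}'$) with $[S_1\otimes I_{\cD_{\bf T}},\ldots,S_n\otimes I_{\cD_{\bf T}}]$ (resp.~with the analogous tuple on $F^2(H_n)\otimes \cD_{{\bf T}'}$). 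By uniqueness of the minimal isometric dilation up to unitary equivalence, I identify $(\cK,V_i)$ with $(F^2(H_n)\otimes \cD_{\bf T},\,S_i\otimes I_{\cD_{\bf T}})$, with $\cH\hookrightarrow \cK$ via $K_{\bf T}$, and similarly for $(\cK',V_i')$.

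Under these identifications I set $B:=\varphi_1({\bf R})\colon \cK'\to \cK$ (and then multiply by $\|A\|$ at the very end to undo the normalization). Multi-analyticity with respect to ${\bf S}$ gives
\begin{equation*}
\varphi_1({\bf R})(S_i\otimes I_{\cD_{{\bf T}'}})=(S_i\otimes I_{\cD_{\bf T}})\varphi_1({\bf R}),
\end{equation*}
i.e.~$BV_i'=V_iB$. Reading the Poisson-kernel identity $B^{*}K_{\bf T}=K_{{\bf T}'}A^{*}$ through the isometric embeddings $K_{\bf T}\colon \cH\hookrightarrow \cK$ and $K_{{\bf T}'}\colon \cH'\hookrightarrow \cK'$ yields $B^{*}|_{\cH}=A^{*}$. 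From Lemma \ref{strong-limit} we have $\|\varphi_1({\bf R})\|\le 1$, so after scaling back $\|B\|\le \|A\|$; the opposite inequality $\|B\|\ge \|B^{*}|_{\cH}\|=\|A^{*}\|=\|A\|$ is automatic, giving $\|B\|=\|A\|$.

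The only step that is not purely mechanical is the translation of the Poisson-kernel identity into the restriction condition $B^{*}|_{\cH}=A^{*}$ under the Fock-space realization of the minimal isometric dilation; this rests crucially on $K_{\bf T}$ and $K_{{\bf T}'}$ being \emph{isometric} embeddings, which is exactly where purity of ${\bf T}$ and ${\bf T}'$ enters.
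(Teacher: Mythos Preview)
Your proof is correct and follows essentially the same route as the paper's: normalize $A$ to a contraction, apply Theorem~\ref{dil} with $n_2=1$, ${\bf T}_1={\bf T}$, ${\bf T}_1'={\bf T}'$, ${\bf T}_2=A$ to obtain a contractive multi-analytic $\varphi({\bf R})$ satisfying $K_{{\bf T}'}A^*=\varphi({\bf R})^*K_{\bf T}$, identify the minimal isometric dilations with $(S_i\otimes I_{\cD_{\bf T}})$ and $(S_i\otimes I_{\cD_{{\bf T}'}})$ via the isometric Poisson kernels, and set $B:=\varphi({\bf R})$. The paper's argument is the same, only slightly terser in the identification step and in the norm computation (it assumes $\|A\|=1$ directly rather than $\|A\|\le 1$).
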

\begin{proof}
Since ${\bf T}$ is a pure row contraction, the noncommutative Poisson kernel $K_{\bf T}$ is an isometry,  and the minimal isometric dilation  of ${\bf T}$ is
${\bf V}:=(S_1\otimes I_{\cD_{{\bf T}}},\ldots S_n\otimes I_{\cD_{\bf T}})$, where $S_1,\ldots, S_n$ are the left creation operators on the full Fock space $F^2(H_n)$.  Under the identification of $\cH$ with $K_{\bf T}\cH$, we have
$$
T_i^*=(S_i^*\otimes I_{\cD_{\bf T}})|_\cH,\qquad \in \{1,\ldots, n\}.
$$
Similar assertions can be made about ${\bf T}'$ and its minimal isometric dilation ${\bf V}':=(S_1\otimes I_{\cD_{{\bf T}'}},\ldots S_n\otimes I_{\cD_{{\bf T}'}})$.

By scaling, we can assume that $\|A\|=1$, without loss of generality.
Since $A\in \cI({\bf T}', {\bf T})$, we can apply Theorem \ref{dil} in the particular case when $n_2=1$ and ${\bf T}_2:=A$. Consequently, there is a contractive multi-analytic operator $\varphi({\bf R})\in \cR_n^\infty\bar \otimes B(\cD_{{\bf T}_1'},\cD_{{\bf T}_1})$ such that
$K_{{\bf T}'} A^*=\varphi({\bf R})^* K_{\bf T}$. Under the identifications of $\cH$ with $K_{\bf T}\cH$ and $K_{{\bf T}'}\cH$,respectively,  we have
$A^*=\varphi({\bf R})^*|_\cH$.
Since $1=\|A^*\|\leq \|\varphi({\bf R})^*\|\leq 1$, we deduce that $\|A\|=\|\varphi({\bf R})\|$.
Since $B:=\varphi({\bf R})$ intertwines $S_i\otimes I_{\cD_{{\bf T}'}}$ with
$S_i\otimes I_{\cD_{{\bf T}}}$ for each $i\in \{1,\ldots, n\}$, the proof is complete.
\end{proof}

We remark that, as  a consequence of Theorem \ref{dil-com}, we can obtain  the commutant lifting theorem for pure   row contractions in noncommutative varieties (\cite{Po-varieties}).

\begin{theorem} \label{CLT2}

Let $J\neq F_n^\infty$ be a WOT-closed two-sided ideal of  the noncommutative analytic Toeplitz algebra $F_n^\infty$  and let ${\bf B}:=[B_1,\ldots, B_n]$  and ${\bf W}:=[W_1,\ldots, W_n]$ be the corresponding constrained shifts acting
on $\cN_J$.  For each $j=1,2$, let $\cK_j$ be a Hilbert space and $\cE_j\subseteq \cN_J\otimes \cK_j$ be a co-invariant subspace  under each operator $B_i\otimes I_{\cK_j}$, \ $i=1,\ldots, n$.
If $X:\cE_1\to \cE_2$ is a bounded operator such that
\begin{equation}\label{int}
X[P_{\cE_1}(B_i\otimes I_{\cK_1})|_{\cE_1}]=[P_{\cE_2}(B_i\otimes I_{\cK_2})]|_{\cE_2}X,\quad i=1,\ldots,n,
\end{equation}
then there exists
$$G({\bf W})\in \cR_n^\infty(\cV_J)\bar\otimes B(\cK_1,\cK_2)$$
such that
$$
G({\bf W})^*|_{\cE_2}=X^* \quad \text{ and }\quad \|G({\bf W})\|=\|X\|.
$$
\end{theorem}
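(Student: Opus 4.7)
The plan is to apply Theorem \ref{dil-com} to the compressions of ${\bf B} \otimes I_{\cK_j}$ on the co-invariant subspaces $\cE_j$, and then to transfer the resulting multi-analytic intertwiner to the ambient spaces using the universality of the constrained Poisson kernel established in \cite{Po-varieties}. After rescaling we may assume $\|X\|=1$. Put
$$T_{1,i} := P_{\cE_2}(B_i \otimes I_{\cK_2})|_{\cE_2} \text{ on } \cE_2, \qquad T_{1,i}' := P_{\cE_1}(B_i \otimes I_{\cK_1})|_{\cE_1} \text{ on } \cE_1,$$
with row tuples ${\bf T}_1 := (T_{1,i})_{i=1}^n$ and ${\bf T}_1' := (T_{1,i}')_{i=1}^n$. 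Since each $\cE_j$ is co-invariant for ${\bf B}\otimes I_{\cK_j}$ and the latter is a pure row contraction annihilating every $f\in J$, the compressions ${\bf T}_1$ and ${\bf T}_1'$ are pure row contractions lying in $\cV_J$. The hypothesis \eqref{int} reads $X T_{1,i}' = T_{1,i} X$, so with $n_2 = 1$ we have $X \in \cI({\bf T}_1, {\bf T}_1')$.

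Applying Theorem \ref{dil-com} produces a contractive multi-analytic operator
$$\varphi({\bf W}) \in \cR_n^\infty(\cV_J) \bar\otimes B(\cD_{{\bf T}_1'}, \cD_{{\bf T}_1})$$
with $\varphi({\bf W})^* K_{J, {\bf T}_1} = K_{J, {\bf T}_1'} X^*$. In other words, $X^*$ has already been lifted to a multi-analytic intertwiner between the minimal Fock-type dilations $\cN_J \otimes \cD_{{\bf T}_1'}$ and $\cN_J \otimes \cD_{{\bf T}_1}$; only the passage to the given ambient spaces $\cN_J \otimes \cK_j$ remains.

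The inclusions $\iota_j: \cE_j \hookrightarrow \cN_J \otimes \cK_j$ are themselves isometric intertwiners of the compressions with ${\bf B} \otimes I_{\cK_j}$, exhibiting $\cE_j$ as a co-invariant subspace in a (generally non-minimal) Fock-type dilation. By the universality of the constrained Poisson kernel in $\cV_J$ proved in \cite{Po-varieties}, every such embedding factors through the Poisson kernel via a multi-analytic isometry, yielding maps
$$\Phi_1 \in \cR_n^\infty(\cV_J) \bar\otimes B(\cD_{{\bf T}_1'}, \cK_1), \qquad \Phi_2 \in \cR_n^\infty(\cV_J) \bar\otimes B(\cD_{{\bf T}_1}, \cK_2),$$
with $\Phi_j K_{J, {\bf T}_1^{(j)}} = \iota_j$. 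Combining these with $\varphi({\bf W})$ produces the required $G({\bf W}) \in \cR_n^\infty(\cV_J) \bar\otimes B(\cK_1, \cK_2)$: using $\Phi_j^* \iota_j = K_{J, {\bf T}_1^{(j)}}$ one checks $G({\bf W})^* \iota_2 = \iota_1 X^*$, i.e., $G({\bf W})^*|_{\cE_2} = X^*$, and $\|G({\bf W})\| = \|X\|$ follows from the contractive upper bound together with the reverse inequality $\|G({\bf W})\| \geq \|G({\bf W})^*|_{\cE_2}\| = \|X\|$. The principal obstacle is the transfer step itself: since adjoints of multi-analytic operators are not multi-analytic, the naive composition $\Phi_2 \varphi({\bf W}) \Phi_1^*$ fails to intertwine the $B_i \otimes I$ forward, and the correct construction rests on the fine structure of co-invariant subspaces of ${\bf B} \otimes I_{\cK_j}$ developed in \cite{Po-varieties}.
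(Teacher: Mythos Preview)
The paper does not actually prove Theorem \ref{CLT2}; it only remarks that the result follows from Theorem \ref{dil-com} and refers to \cite{Po-varieties} for the original statement. Your strategy---apply Theorem \ref{dil-com} to the compressions and then transfer the resulting lift from the Poisson-kernel spaces $\cN_J\otimes\cD_{{\bf T}_1^{(j)}}$ to the given ambient spaces $\cN_J\otimes\cK_j$---is the natural one, and you are right to flag the transfer step as the real difficulty rather than gloss over it. As written, however, the argument is incomplete: you never produce a concrete $G({\bf W})$, and the candidate $\Phi_2\,\varphi({\bf W})\,\Phi_1^*$ is, as you note, not multi-analytic.

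The obstruction is genuine in the constrained setting because the $B_i$ are not isometries, so the range of $\Phi_1$ need not be reducing for $B_i\otimes I_{\cK_1}$ and $\Phi_1^*$ is only co-analytic. The clean fix is to carry out the transfer one level up, in the unconstrained Fock space. View $\cE_j\subset \cN_J\otimes\cK_j\subset F^2(H_n)\otimes\cK_j$ as co-invariant for $S_i\otimes I_{\cK_j}$; the compressions are still ${\bf T}_1,{\bf T}_1'$. Apply Theorem \ref{dil} (rather than \ref{dil-com}) to obtain $\varphi({\bf R})\in\cR_n^\infty\bar\otimes B(\cD_{{\bf T}_1'},\cD_{{\bf T}_1})$ with $K_{{\bf T}_1'}X^*=\varphi({\bf R})^*K_{{\bf T}_1}$. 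Now the minimal dilation subspace $\cM_j:=\overline{\bigvee_\alpha (S_\alpha\otimes I)\cE_j}$ \emph{is} reducing for each $S_i\otimes I_{\cK_j}$, because the $S_i$ are isometries with orthogonal ranges (compute $S_i^*S_\alpha$ on generators and use that $\cE_j$ is co-invariant). Consequently the inner multi-analytic $\Phi_j:F^2(H_n)\otimes\cD_{{\bf T}_1^{(j)}}\to F^2(H_n)\otimes\cK_j$ with $\Phi_jK_{{\bf T}_1^{(j)}}=\iota_j$ has $\Phi_j\Phi_j^*=P_{\cM_j}$ commuting with $S_i\otimes I$, so $\Phi_1^*$ is itself multi-analytic with respect to ${\bf S}$, and $\widetilde G({\bf R}):=\Phi_2\,\varphi({\bf R})\,\Phi_1^*$ lies in $\cR_n^\infty\bar\otimes B(\cK_1,\cK_2)$ with $\widetilde G({\bf R})^*|_{\cE_2}=X^*$ and $\|\widetilde G({\bf R})\|=\|X\|$. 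Finally compress: $G({\bf W}):=P_{\cN_J\otimes\cK_2}\,\widetilde G({\bf R})|_{\cN_J\otimes\cK_1}\in\cR_n^\infty(\cV_J)\bar\otimes B(\cK_1,\cK_2)$; since $\cE_2\subset\cN_J\otimes\cK_2$ and $X^*$ takes values in $\cE_1\subset\cN_J\otimes\cK_1$, both $G({\bf W})^*|_{\cE_2}=X^*$ and the norm equality survive the compression.
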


 Another consequence of Theorem \ref{dil-com} is the following  Schur \cite{Sc} type representation   for the  unit ball of $\cR_n^\infty(\cV_J)\bar \otimes B(\cH', \cH)$.

\begin{theorem} \label{transfer} Let $J\neq F_n^\infty$ be a WOT-closed two-sided ideal of  the noncommutative analytic Toeplitz algebra $F_n^\infty$  and let ${\bf W}:=[W_1,\ldots, W_n]$ be the corresponding constrained  right creation  operators acting
on $\cN_J$.
 An operator $\Gamma:\cN_J\otimes \cH'\to \cN_J\otimes \cH$ is in the closed unit ball of $\cR_n^\infty(\cV_J)\bar \otimes B(\cH', \cH)$   if and only if there is a Hilbert space $\cE$ and a  unitary operator
$$
 \Lambda=\left[ \begin{matrix} E&F\\G&H \end{matrix}\right]: \begin{matrix} \cH'\\ \oplus \\ \cE \end{matrix}\to   \begin{matrix} \cH\\ \oplus \\ \bigoplus_{i=1}^n\cE \end{matrix}
$$
such that $\Gamma=\text{\rm SOT-}\lim_{r\to 1}\varphi_{\Lambda}(r{\bf W})$, where
\begin{equation*}
\begin{split}
\varphi_{\Lambda}(r{\bf W}):= I_{\cN_J}\otimes E+\left(I_{\cN_J}\otimes F\right)& \left(I_{\cN_J\otimes \cH}-\sum_{i=1}^{n}r W_{i}\otimes H_j\right)^{-1}\\
& \times \left[r W_{1}\otimes I_{\cH},\ldots,
rW_{n}\otimes I_{\cH}\right]\left(I_{\cN_J}\otimes G\right).
\end{split}
\end{equation*}
and  $H=\left[ \begin{matrix} H_1\\\vdots\\H_n\end{matrix} \right]:\cE\to \bigoplus_{i=1}^n \cE$.

\end{theorem}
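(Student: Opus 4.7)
My plan is to deduce the Schur-type representation by combining Lemma \ref{strong-limit} (which supplies the transfer-function formula) with Theorem \ref{dil-com} (which guarantees that every element of the closed unit ball arises this way), specialized throughout to the case $n_2=1$.

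For the sufficiency, given a unitary $\Lambda : \cH' \oplus \cE \to \cH \oplus \bigoplus_{i=1}^n \cE$, I would set $U := \Lambda^*$ and apply Lemma \ref{strong-limit} with $n_2 = 1$, identifying $\cD_{{\bf T}_1}$, $\cD_{{\bf T}_1'}$, and $\cD_{{\bf T}_2}\oplus \cK$ with $\cH$, $\cH'$, and $\cE$, respectively. This produces a contractive multi-analytic operator $\varphi_{U^*}({\bf R})\in \cR_n^\infty \bar\otimes B(\cH', \cH)$ agreeing with the stated formula, but with the right creation operators $R_i$ in place of the constrained operators $W_i$. Since $\cM_J=\overline{JF^2(H_n)}$ is invariant under each $R_i$ (from $R_i f v = f R_i v$ for $f\in J\subset F_n^\infty$, using that right creations commute with $F_n^\infty$), one has $P_{\cN_J}R_\alpha|_{\cN_J} = W_\alpha$ for every $\alpha\in \FF_n^+$. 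Compressing on each side and invoking the characterization $\cR_n^\infty(\cV_J)\bar\otimes B(\cH',\cH) = P_{\cN_J\otimes \cH}[\cR_n^\infty\bar\otimes B(\cH',\cH)]|_{\cN_J\otimes \cH'}$ recalled in the excerpt, the operator $\Gamma := P_{\cN_J\otimes \cH}\varphi_{U^*}({\bf R})|_{\cN_J\otimes \cH'}$ lies in $\cR_n^\infty(\cV_J)\bar\otimes B(\cH', \cH)$, has norm at most one, and coincides with $\text{\rm SOT-}\lim_{r\to 1}\varphi_\Lambda(r{\bf W})$.

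For the necessity, given $\Gamma$ in the closed unit ball, I would apply Theorem \ref{dil-com} with $n_1=n$, $n_2=1$, and the choices ${\bf T}_1 := (B_1\otimes I_\cH, \ldots, B_n\otimes I_\cH)\in \cV_J(\cN_J\otimes \cH)$, ${\bf T}_1' := (B_1\otimes I_{\cH'}, \ldots, B_n\otimes I_{\cH'})\in \cV_J(\cN_J\otimes \cH')$, and ${\bf T}_2 := \Gamma$. Both ${\bf T}_1, {\bf T}_1'$ are pure because ${\bf B}$ is pure, and the multi-analyticity identity $\Gamma(B_i\otimes I_{\cH'})=(B_i\otimes I_\cH)\Gamma$ places $\Gamma\in \cI({\bf T}_1,{\bf T}_1')$. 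The theorem yields $U\in \cU_{\bf T}^\cK$ and a multi-analytic operator $\varphi_1({\bf W})$ with
\begin{equation*}
K_{J,{\bf T}_1'}\Gamma^* = \varphi_1({\bf W})^* K_{J,{\bf T}_1}.
\end{equation*}

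The last step is the identification of the constrained Poisson kernels. A direct computation gives $\Delta_{\bf B}^2 = P_{\cN_J}P_{\CC 1}|_{\cN_J}$; since $J\neq F_n^\infty$ forces $1\in \cN_J$, one finds $\cD_{\bf B}=\CC 1$ and that $K_{J,{\bf B}}: \cN_J\to \cN_J\otimes \cD_{\bf B}$ is the natural unitary $\xi\mapsto \xi\otimes 1$. Consequently $K_{J,{\bf T}_1} = K_{J,{\bf B}}\otimes I_\cH$ and $K_{J,{\bf T}_1'} = K_{J,{\bf B}}\otimes I_{\cH'}$ are unitaries implementing $\cD_{{\bf T}_1}\cong \cH$ and $\cD_{{\bf T}_1'}\cong \cH'$, under which the intertwining identity above collapses to $\Gamma = \varphi_1({\bf W})$. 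Setting $\Lambda := U^*$ and $\cE := \cD_{{\bf T}_2}\oplus \cK$, and writing $\Lambda = \left[\begin{matrix}E & F \\ G & H\end{matrix}\right]$ with $H = \left[\begin{matrix}H_1\\\vdots\\H_n\end{matrix}\right]$, the substitutions $E=A^*$, $F=C^*$, $G=B^*$, $H_i = D_i^*$ convert the formula of Lemma \ref{strong-limit} for $\varphi_{U^*}({\bf R})$ into the formula for $\varphi_\Lambda({\bf W})$ in the statement, after the same compression to $\cN_J$ as in the sufficiency direction. The principal obstacle is precisely the unitarity of $K_{J,{\bf B}}$: this is what licenses the cancellation of the Poisson kernels on both sides of the intertwining identity and yields $\Gamma = \varphi_1({\bf W})$ exactly rather than only up to an isometric embedding, and it rests on the rank-one reproducing role of the vacuum vector $1$ in $\cN_J$.
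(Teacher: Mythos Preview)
Your overall strategy is sound and genuinely different from the paper's. The paper does \emph{not} apply Theorem \ref{dil-com} directly to ${\bf T}_1={\bf B}\otimes I_\cH$; instead it first invokes the commutant lifting theorem (Theorem \ref{CLT}) to lift $\Gamma$ to a contractive multi-analytic $\Psi$ on the full Fock space, then applies Theorem \ref{dil} with ${\bf T}_1={\bf S}\otimes I_\cH$, where the Poisson kernel is literally the identity, and only at the end compresses back to $\cN_J$. Your route avoids the commutant lifting step and stays on the variety throughout, at the price of having to check that the constrained Poisson kernel $K_{J,{\bf B}}$ is unitary. Both arguments ultimately reduce to the same transfer formula, but yours is more self-contained while the paper's sidesteps the kernel computation entirely.

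There is, however, a genuine error in your justification of that unitarity. You assert that ``$J\neq F_n^\infty$ forces $1\in\cN_J$,'' and you emphasize this as the crux of the argument. This is false: already for $n=1$ and $J=\psi H^\infty$ with $\psi$ an inner function satisfying $\psi(0)\neq 0$, one has $1\notin H^2\ominus\psi H^2$. What \emph{is} true is that $P_{\cN_J}1\neq 0$ whenever $J\neq F_n^\infty$: otherwise $\Delta_{\bf B}=0$, so $\sum_i B_iB_i^*=I$, and iterating gives $\sum_{|\alpha|=k}B_\alpha B_\alpha^*=I$ for all $k$, contradicting the purity of ${\bf B}$ unless $\cN_J=\{0\}$. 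With $v:=P_{\cN_J}1$ and $c:=\|v\|\in(0,1]$ one computes $\Delta_{\bf B}=c^{-1}vv^*$, hence $\cD_{\bf B}=\CC v$, and then directly
\[
K_{J,{\bf B}}\xi=\sum_\alpha P_{\cN_J}e_\alpha\otimes c^{-1}\langle S_\alpha^*\xi,1\rangle v=c^{-1}\xi\otimes v,\qquad \xi\in\cN_J,
\]
which is the natural unitary $\cN_J\to\cN_J\otimes\cD_{\bf B}$. After this correction your cancellation argument goes through verbatim and yields $\Gamma=\varphi_1({\bf W})$ as claimed. So the approach is valid, but the supporting computation needs to be rewritten to use $P_{\cN_J}1$ rather than $1$ itself.
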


\begin{proof} Assume that $\Gamma:\cN_J\otimes \cH'\to \cN_J\otimes \cH$ is
a contractive multi-analytic operator  with respect to the universal model ${\bf B}=[B_1, \ldots, B_n]$ of the variety $\cV_J$, i.e. $\Gamma(B_i\otimes I_{\cH'})=(B_i\otimes I_{\cH})\Gamma$ for any $i\in \{1,\ldots, n\}$. Due to the commutant lifting theorem for pure row contractions, Theorem \ref{CLT}, there exists
 a contractive multi-analytic  operator $\Psi:F^2(H_n)\otimes \cH'\to F^2(H_n)\otimes \cH$  with respect to the universal model ${\bf S}$, i.e.
 $\Psi(S_i\otimes I_{\cH'})=(S_i\otimes I_{\cH})\Psi$ for any $i\in \{1,\ldots, n\}$,  such that $\|\Gamma\|=\|\Psi\|$ and $\Psi^*|_{\cN_J\otimes \cH}=\Gamma^*$.

 Set ${\bf T}_1:=(S_1\otimes I_\cH,\ldots S_n\otimes I_{\cH})$,
${\bf T}_1':=(S_1\otimes I_{\cH'},\ldots S_n\otimes I_{\cH'})$, and
${\bf T}_2:=\Psi$. Since $\Psi\in \cI({\bf T}, {\bf T}')$,  Theorem \ref{dil} shows that there is a unitary operator
$$
 \Lambda=\left[ \begin{matrix} E&F\\G&H \end{matrix}\right]: \begin{matrix} \cD_{{\bf T}_1'}\\ \oplus \\ \cE \end{matrix}\to   \begin{matrix} \cD_{{\bf T}_1}\\ \oplus \\ \bigoplus_{i=1}^n\cE \end{matrix}
$$
such that $\varphi_{\Lambda}({\bf R}):=\text{\rm SOT-}\lim_{r\to 1}\varphi_{\Lambda}(r{\bf R})$ is a multi-analytic operator in $\cR_{n}^\infty\bar\otimes B(\cD_{{\bf T}_1'},\cD_{{\bf T}_1})$, where $\varphi_{\Lambda}(r{\bf R})$ is  defined as in the theorem and such that
$K_{{\bf T}_1} \Psi^*=\varphi_{\Lambda}({\bf R})^*K_{{\bf T}_1}$.
Note that  $\cD_{{\bf T}_1}=\cH$, $\cD_{{\bf T}_1'}=\cH'$, and the noncommutative Poisson kernel $K_{{\bf T}_1}$ is the identity on
$F^2(H_n)\otimes \cH$. Consequently, $\Psi=\varphi_{\Lambda}({\bf R})$.
Since $\Psi^*|_{\cN_J\otimes \cH}=\Gamma^*$, we deduce that $\Gamma=\varphi_\Lambda({\bf W})$.

 To prove the converse, note that,  in the particular case when $n_2=1$ and $U^*=\Lambda$,   Lemma \ref{strong-limit} shows that
 $\Psi=\text{\rm SOT-}\lim_{r\to 1}\varphi_{\Lambda}(r{\bf R})$, where
\begin{equation*}
\begin{split}
\varphi_{\Lambda}(r{\bf R}):= I_{F^2(H_{n})}\otimes E+\left(I_{F^2(H_{n})}\otimes F\right)& \left(I_{F^2(H_{n})\otimes \cH}-\sum_{i=1}^{n}r R_{i}\otimes H_j\right)^{-1}\\
& \times \left[r R_{1}\otimes I_{\cH},\ldots,
rR_{n}\otimes I_{\cH}\right]\left(I_{F^2(H_{n})}\otimes G\right).
\end{split}
\end{equation*}
and  $H=\left[ \begin{matrix} H_1\\\vdots\\H_n\end{matrix} \right]:\cE\to \bigoplus_{i=1}^n \cE$, is a contractive multi-analytic operator with respect to ${\bf S}$.
 Since $\cN_J$ is a  co-invariant subspace under $R_1,\ldots, R_n$, we deduce that $\Gamma:=\varphi_\Lambda({\bf W})$ is a contractive multi-analytic operator with respect to ${\bf B}$.
 The proof is complete.
\end{proof}

As a particular case of Theorem \ref{transfer},
we obtain  the following  representation theorem for the unit ball of the noncommutative Hardy algebra $\cR_n^\infty(\cV_J)$ associated with the variety $\cV_J$.

\begin{corollary} \label{transfer2}  An operator $\Gamma\in B(\cN_J)$ is in the closed unit ball of $\cR_n^\infty(\cV_J)$   if and only if there is a Hilbert space $\cH$ and a  unitary operator
$$
 \Lambda=\left[ \begin{matrix} E&F\\G&H \end{matrix}\right]: \begin{matrix} \CC\\ \oplus \\ \cH \end{matrix}\to   \begin{matrix} \CC\\ \oplus \\ \bigoplus_{i=1}^n\cH \end{matrix}
$$
such that $\Gamma=\text{\rm SOT-}\lim_{r\to 1}\varphi_{\Lambda}({\bf W})$, where
\begin{equation*}
\begin{split}
\varphi_{\Lambda}(r{\bf W}):= I_{F^2(H_{n})}\otimes E+\left(I_{F^2(H_{n})}\otimes F\right)& \left(I_{F^2(H_{n})\otimes \cH}-\sum_{i=1}^{n}r W_{i}\otimes H_j\right)^{-1}\\
& \times \left[r W_{1}\otimes I_{\cH},\ldots,
rW_{n}\otimes I_{\cH}\right]\left(I_{F^2(H_{n})}\otimes G\right).
\end{split}
\end{equation*}
and  $H=\left[ \begin{matrix} H_1\\\vdots\\H_n\end{matrix} \right]:\cH\to \bigoplus_{i=1}^n \cH$.
\end{corollary}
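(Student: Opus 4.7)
The plan is to derive this corollary as the scalar-coefficient specialization of Theorem \ref{transfer}. Specifically, I would apply Theorem \ref{transfer} with $\cH = \cH' = \CC$, identifying $\cN_J\otimes \CC$ with $\cN_J$ and $\cR_n^\infty(\cV_J)\bar\otimes B(\CC,\CC)$ with $\cR_n^\infty(\cV_J)$ itself. Under this identification, the auxiliary Hilbert space denoted $\cE$ in Theorem \ref{transfer} plays the role of the Hilbert space called $\cH$ in the statement of the corollary, and the block unitary $\Lambda$ of Theorem \ref{transfer} takes exactly the form stated here.

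For the necessity direction, given $\Gamma$ in the closed unit ball of $\cR_n^\infty(\cV_J)$, I would view $\Gamma$ as a contractive multi-analytic operator on $\cN_J\otimes \CC$ with respect to the universal model ${\bf B}=[B_1,\ldots,B_n]$, and invoke Theorem \ref{transfer} to obtain a Hilbert space $\cE$ and a block unitary $\Lambda\colon \CC\oplus \cE\to \CC\oplus \bigoplus_{i=1}^n \cE$ together with the transfer-function representation $\Gamma = \text{\rm SOT-}\lim_{r\to 1}\varphi_\Lambda(r{\bf W})$ in terms of the constrained right creation operators. Renaming $\cE$ as $\cH$ yields the asserted formula verbatim.

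For the sufficiency direction, given a unitary $\Lambda$ of the stated form, the converse half of Theorem \ref{transfer} (which ultimately rests on Lemma \ref{strong-limit} applied in the particular case $n_2=1$ with $U^*=\Lambda$) ensures that the SOT limit $\varphi_\Lambda({\bf W}):=\text{\rm SOT-}\lim_{r\to 1}\varphi_\Lambda(r{\bf W})$ exists and defines a contractive multi-analytic operator on $\cN_J$ with respect to ${\bf B}$; hence, by the identification of multi-analytic operators as elements of $\cR_n^\infty(\cV_J)\bar\otimes B(\CC,\CC)\cong \cR_n^\infty(\cV_J)$, it lies in the closed unit ball of $\cR_n^\infty(\cV_J)$.

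The main obstacle, such as it is, is purely bookkeeping: one must check that $\cR_n^\infty(\cV_J)\bar\otimes B(\CC,\CC)$ is canonically identified with $\cR_n^\infty(\cV_J)$ preserving the unit ball, and that the transfer-function formula from Theorem \ref{transfer} collapses to the one stated in the corollary when $\cH=\cH'=\CC$. Both are immediate once the appropriate identifications are made, so no new technical input is required beyond Theorem \ref{transfer}.
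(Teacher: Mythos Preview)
Your proposal is correct and matches the paper's own approach: the paper presents Corollary~\ref{transfer2} explicitly as the particular case $\cH=\cH'=\CC$ of Theorem~\ref{transfer}, with no additional argument given. Your write-up simply spells out the identifications and the relabeling $\cE\mapsto\cH$ that make this specialization work.
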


\bigskip

\section{And\^ o type inequalities on  noncommutative varieties}

In this section, we obtain And\^ o type dilations and inequalities  for the elements of the bi-ball ${\bf P}_{\bf n}^-$ and a class of noncommutative varieties. Moreover, we prove that the bi-ball has a universal model consisting of the left creation operators and contractive multi-analytic operators. A similar result holds on noncommutative varieties.

One of the most important consequences of the results from Section 1 is the following And\^ o type dilation for noncommutative varieties in the bi-ball ${\bf P}_{\bf n}^-$. We recall that $\cU_{\bf T}^\cK$ is the set of all unitary extensions of the isometry defined by relation \eqref{iso2} (see also \eqref{ABCD}).  According to Lemma \ref{strong-limit}, for each $U\in \cU_{\bf T}^\cK$,
 the strong operator topology limit
$\varphi_{U^*}({\bf R}):=\text{\rm SOT-}\lim_{r\to 1}\varphi_{U^*}(r{\bf R})
$
 exists and defines a contractive  multi-analytic  operator.

\begin{theorem} \label{dil2}
Let ${\bf T}=({\bf T}_1, {\bf T}_2)\in {\bf P}_{\bf n}^-(\cH)$ with ${\bf T}_1=(T_{1,1},\ldots, T_{1,n_1})$ and ${\bf T}_2:=(T_{2,1},\ldots, T_{2,n_2})$.     If ${\bf T}_1 $ is in the noncommutative variety $  \cV_J(\cH)$ and $\varphi_{U^*}({\bf W})=(\varphi_1({\bf W}),\ldots, \varphi_{n_2}({\bf W}))$ is the  multi-analytic operator associated with $U\in \cU_{\bf T}^\cK$,
then
$$
K_{J,{\bf T}_1} T_{1,\alpha}^* T_{2,\beta}^*=\left(B_{1,\alpha}^*\otimes I_{\cD_{{\bf T}_1}}\right)\varphi_\beta({\bf W})^* K_{J,{\bf T}_1}, \qquad \alpha\in \FF_{n_1}^+, \beta\in \FF_{n_2}^+,
$$
  where ${\bf B}:=[B_{1,1},\ldots, B_{1,n_1}]$ and ${\bf W}:=[W_{1,1},\ldots, W_{1,n_1}]$ are the universal models of $\cV_J$ and $K_{J,{\bf T}_1}$ is  the associated  constrained  Poisson kernel.
\end{theorem}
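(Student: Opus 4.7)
The plan is to reduce the statement to Theorem \ref{dil-com} by exploiting the bi-ball structure. Because $({\bf T}_1,{\bf T}_2)\in {\bf P}_{\bf n}^-(\cH)$, each entry of ${\bf T}_1$ commutes with each entry of ${\bf T}_2$, so $T_{2,j}T_{1,i}=T_{1,i}T_{2,j}$ for every $i,j$. Reading this as the intertwining condition $T_{2,j}T_{1,i}'=T_{1,i}T_{2,j}$ with the coincident choice ${\bf T}_1':={\bf T}_1$, we see that ${\bf T}_2\in \cI({\bf T}_1,{\bf T}_1)$, and since ${\bf T}_1\in \cV_J(\cH)$ all hypotheses of Theorem \ref{dil-com} are met with this diagonal choice of $({\bf T}_1,{\bf T}_1')$. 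The theorem then delivers the base intertwining relations
$$K_{J,{\bf T}_1}T_{2,j}^*=\varphi_j({\bf W})^*K_{J,{\bf T}_1}\quad\text{and}\quad K_{J,{\bf T}_1}T_{1,i}^*=(B_{1,i}^*\otimes I_{\cD_{{\bf T}_1}})K_{J,{\bf T}_1}$$
for $j\in\{1,\ldots,n_2\}$ and $i\in\{1,\ldots,n_1\}$.

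Next I would promote these length-one relations to arbitrary words by induction on word length. For $\beta=g_{j_1}\cdots g_{j_m}\in \FF_{n_2}^+$, define $\varphi_\beta({\bf W}):=\varphi_{j_1}({\bf W})\cdots\varphi_{j_m}({\bf W})$, so that $\varphi_\beta({\bf W})^*=\varphi_{j_m}({\bf W})^*\cdots\varphi_{j_1}({\bf W})^*$. Peeling one factor $T_{2,j_k}^*$ at a time off $T_{2,\beta}^*=T_{2,j_m}^*\cdots T_{2,j_1}^*$ and applying the base case at each step gives
$$K_{J,{\bf T}_1}T_{2,\beta}^*=\varphi_\beta({\bf W})^*K_{J,{\bf T}_1}.$$
The analogous iteration of the Poisson intertwiner produces
$$K_{J,{\bf T}_1}T_{1,\alpha}^*=(B_{1,\alpha}^*\otimes I_{\cD_{{\bf T}_1}})K_{J,{\bf T}_1},\qquad \alpha\in \FF_{n_1}^+.$$

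The desired identity is then obtained by chaining the two iterated relations in sequence:
$$K_{J,{\bf T}_1}T_{1,\alpha}^*T_{2,\beta}^*=(B_{1,\alpha}^*\otimes I_{\cD_{{\bf T}_1}})K_{J,{\bf T}_1}T_{2,\beta}^*=(B_{1,\alpha}^*\otimes I_{\cD_{{\bf T}_1}})\varphi_\beta({\bf W})^*K_{J,{\bf T}_1}.$$
Note that the final chaining step itself requires no commutativity between $T_{1,\alpha}^*$ and $T_{2,\beta}^*$; commutativity is consumed once, at the very outset, to certify that ${\bf T}_2\in\cI({\bf T}_1,{\bf T}_1)$ and thereby activate Theorem \ref{dil-com}. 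The main conceptual point is precisely this initial identification of the bi-ball hypothesis with the intertwining condition $\cI({\bf T}_1,{\bf T}_1)$; after that the argument is pure bookkeeping via word-length induction and operator associativity.
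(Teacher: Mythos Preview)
Your proof is correct and follows essentially the same approach as the paper: apply Theorem~\ref{dil-com} with the diagonal choice ${\bf T}_1'={\bf T}_1$ (using that ${\bf T}_2\in\cI({\bf T}_1,{\bf T}_1)$ by the bi-ball commutativity), then iterate the resulting length-one intertwining relations to arbitrary words. The paper compresses the final iteration into the single phrase ``Hence, the relation in the theorem follows,'' whereas you spell out the word-length induction explicitly.
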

\begin{proof} In the particular case when ${\bf T}_1={\bf T}_1'$,  Theorem \ref{dil-com} shows that
$$
K_{J,{\bf T}_1} T_{2,j}^* =\varphi_j({\bf W})^*K_{J{\bf T}_1}, \qquad j\in\{1,\ldots, n_2\}
$$
and
$$
K_{J,{\bf T}_1}T_{1,i}^*=
\left(B_{1,i}^*\otimes I_{\cD_{{\bf T}_1}}\right)  K_{J,{\bf T}_1},  \qquad i\in \{1,\ldots, n_1\}.
$$
Hence, the relation in the theorem follows.
\end{proof}

We remark that Theorem \ref{dil2} provides a model  and a characterization of the elements $({\bf T}_1, {\bf T}_2)\in {\bf P}_{\bf n}^-(\cH)$ with ${\bf T}_1\in \cV_J(\cH)$ .
Indeed, if  ${\bf T}=({\bf T}_1, {\bf T}_2)\in B(\cH)^{n_1}\times_c B(\cH)^{n_2}$, then ${\bf T}\in {\bf P}_{\bf n}^-(\cH)$ with ${\bf T}_1\in \cV_J(\cH)$ if and only if there is a Hilbert space $\cD$, a contractive multi-analytic operator $(\varphi_1({\bf W}),\ldots, \varphi_{n_2}({\bf W})$
with respect to ${\bf B}$, $\varphi_j({\bf W})\in B(\cN_J\otimes \cD)$, and a co-invariant subspace $\cM\subset \cN_J\otimes \cD$ under each of the operators
$B_{1,i}\otimes I_\cD$ and  $\varphi_j({\bf W})$, where $i\in \{1,\ldots, n_1\}$ and
$j\in \{1,\ldots, n_2\}$, such that $\cM$ can be identified with $\cH$,
$$
(B_{1,i}^*\otimes I_\cD)|_\cH=T_{1,i}^*, \quad \text{and} \quad \varphi_j({\bf W})^*|_\cH=T_{2,j}^*.
$$
Note that the direct implication  is due to Theorem \ref{dil2} under the identification of $\cH$ with $K_{J,{\bf T}_1}\cH$. The converse is obvious.

In what follows, we obtain And\^ o type inequalities  for the bi-ball ${\bf P}_{\bf n}^-(\cH)$ and the noncommutative variety
$${{\bf P}_{c,{\bf n}}^-}(\cH):=\left\{({\bf T}_1, {\bf T}_2)\in {\bf P}_{\bf n}^-(\cH): {\bf T}_1\in \cV_J(\cH)\right\}.
$$
First, we consider the case when ${\bf T}_1=(T_{1,1},\ldots, T_{1,n_1})$ and ${\bf T}_2:=(T_{2,1},\ldots, T_{2,n_2})$ are  row contractions on a Hilbert space with the property that ${\bf T}=({\bf T}_1, {\bf T}_2)\in {\bf P}_{c,{\bf n}}^-(\cH)$ with
$d_i:=\dim \cD_{{\bf T}_i}<\infty$   and $d_1+n_1d_2=d_2+n_2d_1$.
We recall that $\cU_{\bf T}$  is the set of all  unitary extensions
 $U:\cD_{{\bf T}_1}\oplus \bigoplus_{j=1}^{n_1}\cD_{{\bf T}_2} \to
\bigoplus_{j=1}^{n_2}\cD_{{\bf T}_1}\oplus \cD_{{\bf T}_2}$ of the isometry
\begin{equation}\label{UU}
U\left(\Delta_{{\bf T}_1}h, \Delta_{{\bf T}_2}T_{1,1}^*h, \ldots, \Delta_{{\bf T}_2}T_{1,n_1}^*h\right):=
\left(\Delta_{{\bf T}_1}T_{2,1}^*h, \ldots, \Delta_{{\bf T}_1}T_{2,n_2}^*h,
\Delta_{{\bf T}_2}h\right),\qquad h\in \cH.
\end{equation}

Given $n_1,n_2\in \NN$,  let ${\bf X}:=\left< X_1,\ldots, X_{n_1}\right>$ and
${\bf Y}:=\left< Y_1,\ldots, Y_{n_2}\right>$  be noncommutative indeterminates and assume that
 $X_iY_j=Y_jX_i$ for any $i\in \{1,\ldots, n_1\}$ and $j\in \{1,\ldots, n_2\}$. We denote by  $\CC\left<{\bf X}, {\bf Y}\right>$  the complex algebra of all  polynomials in    $X_{1},\ldots, X_{n_1}$ and $Y_{1},\ldots, Y_{n_2}$. Note that when $n_1=n_2=1$, then $\CC\left<{\bf X}, {\bf Y}\right>$ coincides with the algebra $\CC[z,w]$ of complex polynomials in two variable.

\begin{theorem}\label{ando}
Let ${\bf T}_1=(T_{1,1},\ldots, T_{1,n_1})$ and ${\bf T}_2:=(T_{2,1},\ldots, T_{2,n_2})$ be   row contractions on a Hilbert space such that
$$d_i:=\dim \cD_{{\bf T}_i}<\infty\ \text{ and } \ d_1+n_1d_2=d_2+n_2d_1=m,
$$
and  let ${\bf B}:=[B_{1,1},\ldots, B_{1,n_1}]$ and ${\bf W}:=[W_{1,1},\ldots, W_{1,n_1}]$ be  the universal models of the noncommutative variety $\cV_J$.
If ${\bf T}=({\bf T}_1, {\bf T}_2)\in {\bf P}_{c,{\bf n}}^-(\cH)$  and $\Lambda\in \cU_{\bf T}$,   then
$$
\|[p_{rs}({\bf T}_1,{\bf T}_2)]_{k}\|\leq   \|[p_{rs}({\bf B}\otimes I_{\CC^{d_1}}, \varphi_\Lambda ({\bf  W}))]_{k}\|, \qquad [p_{rs}]_{ k}\in M_k(\CC\left<{\bf X}, {\bf Y}\right>), k\in \NN,
$$
where  $\varphi_\Lambda ({\bf W})=(\varphi_1({\bf W}),\ldots,\varphi_{n_2}({\bf W}))$
is a contractive  operator uniquely determined by $\Lambda$ and each $\varphi_j({\bf W}) $ is a $d_1\times d_1$-matrix with entries in the Hardy algebra
 $\cR_{n_1}^\infty(\cV_J)$.
 \end{theorem}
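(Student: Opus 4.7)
The plan is to reduce the desired Andô-type matrix inequality to an application of Theorem \ref{dil2}, via the constrained Poisson kernel $K_{J,{\bf T}_1}$. Since ${\bf T}_1\in \cV_J(\cH)$ is, by definition, a pure row contraction, the kernel $K_{J,{\bf T}_1}:\cH\to \cN_J\otimes \cD_{{\bf T}_1}$ is an isometry. The dimensional assumption $d_1+n_1d_2=n_2d_1+d_2$ is precisely what allows us to take $\Lambda$ from $\cU_{\bf T}$ (rather than $\cU_{\bf T}^\cK$ with infinite-dimensional $\cK$); Lemma \ref{strong-limit} applied with $\cH=\cH'=\cD_{{\bf T}_1}$ and $\cE=\cD_{{\bf T}_2}$ then produces the tuple $\varphi_\Lambda({\bf W})=(\varphi_1({\bf W}),\ldots,\varphi_{n_2}({\bf W}))$, each $\varphi_j({\bf W})$ sitting in $\cR_{n_1}^\infty(\cV_J)\bar\otimes M_{d_1}$, i.e.\ a $d_1\times d_1$ matrix with entries in $\cR_{n_1}^\infty(\cV_J)$.

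Taking ${\bf T}_1'={\bf T}_1$ in Theorem \ref{dil2} yields, for every $\alpha\in \FF_{n_1}^+$ and $\beta\in \FF_{n_2}^+$,
\[
K_{J,{\bf T}_1}\,T_{1,\alpha}^* T_{2,\beta}^* \;=\; \bigl(B_{1,\alpha}^*\otimes I_{\cD_{{\bf T}_1}}\bigr)\,\varphi_\beta({\bf W})^*\,K_{J,{\bf T}_1}.
\]
The next step is to lift this to arbitrary polynomials in $\CC\langle {\bf X},{\bf Y}\rangle$. Two commutativity facts are crucial here: entries of ${\bf T}_1$ commute with entries of ${\bf T}_2$ (so any monomial in $T_{1,i},T_{2,j}$ may be reordered as $T_{1,\alpha}T_{2,\beta}$), and $B_{1,i}\otimes I_{\CC^{d_1}}$ commutes with $\varphi_j({\bf W})$ because $\varphi_j({\bf W})\in \cR_{n_1}^\infty(\cV_J)\bar\otimes M_{d_1}$ while $B_{1,i}\otimes I\in F_{n_1}^\infty(\cV_J)\bar\otimes M_{d_1}$, and these two algebras are each other's commutants (recalled in Section 1). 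Hence $p({\bf B}\otimes I_{\CC^{d_1}},\varphi_\Lambda({\bf W}))=\sum c_{\alpha,\beta}(B_{1,\alpha}\otimes I)\varphi_\beta({\bf W})$ is unambiguously defined, and by linearity the intertwining relation extends to
\[
K_{J,{\bf T}_1}\,p({\bf T}_1,{\bf T}_2)^* \;=\; p\bigl({\bf B}\otimes I_{\CC^{d_1}},\varphi_\Lambda({\bf W})\bigr)^*\,K_{J,{\bf T}_1},\qquad p\in \CC\langle {\bf X},{\bf Y}\rangle.
\]
Taking adjoints and using $K_{J,{\bf T}_1}^*K_{J,{\bf T}_1}=I_\cH$ yields the similarity
\[
p({\bf T}_1,{\bf T}_2)=K_{J,{\bf T}_1}^*\,p\bigl({\bf B}\otimes I_{\CC^{d_1}},\varphi_\Lambda({\bf W})\bigr)\,K_{J,{\bf T}_1}.
\]

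For the matricial version, I ampliate: applying the displayed identity entrywise to $[p_{rs}]_k\in M_k(\CC\langle {\bf X},{\bf Y}\rangle)$ gives
\[
[p_{rs}({\bf T}_1,{\bf T}_2)]_{k}=(I_k\otimes K_{J,{\bf T}_1})^*\,[p_{rs}({\bf B}\otimes I_{\CC^{d_1}},\varphi_\Lambda({\bf W}))]_{k}\,(I_k\otimes K_{J,{\bf T}_1}),
\]
and since $I_k\otimes K_{J,{\bf T}_1}$ is still an isometry, taking operator norms immediately delivers the inequality claimed in the theorem. The main conceptual point — and the only step that required real work — is the dilation-theoretic input of Theorem \ref{dil2}; all subsequent steps are formal manipulations with the intertwiner, together with the commutation between the universal models and the associated multi-analytic operators. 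The chief thing to be careful about is the bookkeeping when translating $T_{2,\beta}^*T_{1,\alpha}^*$ (which is the natural order produced by the adjoint of $T_{1,\alpha}T_{2,\beta}$) into the form $T_{1,\alpha}^*T_{2,\beta}^*$ to which Theorem \ref{dil2} directly applies; this is the content of the commutativity of the two tuples.
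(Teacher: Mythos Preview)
Your proof is correct and follows essentially the same route as the paper: invoke Theorem \ref{dil2} (the dimensional hypothesis guarantees $\cU_{\bf T}\neq\emptyset$), pass from the monomial intertwining relation to arbitrary $p\in\CC\langle{\bf X},{\bf Y}\rangle$ by linearity plus the two commutativity facts, and then compress via the isometric constrained Poisson kernel to obtain the matrix inequality. If anything, you are more explicit than the paper about why $p({\bf B}\otimes I,\varphi_\Lambda({\bf W}))$ is well-defined (using $F_{n_1}^\infty(\cV_J)'=\cR_{n_1}^\infty(\cV_J)$) and about the adjoint bookkeeping; one cosmetic remark is that Theorem \ref{dil2} is already stated with ${\bf T}_1'={\bf T}_1$, so there is no specialization to make.
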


 \begin{proof} Since $d_1+n_1d_2=d_2+n_2d_1=m$, the set $\cU_{\bf T}$ of all unitary extensions of the isometry $U$ defined by relation \eqref{UU} is non-empty. Fix any $\Lambda\in\cU_{\bf T}$ and apply Theorem \ref{dil2} to ${\bf T}=({\bf T}_1, {\bf T}_2)\in {\bf P}_{c,{\bf n}}^-(\cH)$  and $\Lambda\in \cU_{\bf T}$.
  Then the contractive  multi-analytic operator $\varphi_\Lambda ({\bf W})=(\varphi_1({\bf W}),\ldots,\varphi_{n_2}({\bf W}))$ with $\varphi_j({\bf W})\in \cR_{n_1}^\infty(\cV_J)\bar\otimes \cD_{{\bf T}_1}$ has the property that
 $$
K_{J,{\bf T}_1} T_{1,\alpha}^* T_{2,\beta}^*=\left(B_{1,\alpha}^*\otimes I_{\cD_{{\bf T}_1}}\right)\varphi_\beta({\bf W})^* K_{J,{\bf T}_1}
$$
for any $\alpha\in \FF_{n_1}^+$ and $\beta\in \FF_{n_2}^+$. Consequently, if $p$ is any polynomial in $\CC\left<{\bf X}, {\bf Y}\right>$, we obtain
$$
K_{J,{\bf T}_1} p({\bf T}_1,{\bf T}_2)=p({\bf B}\otimes I_{\CC^{d_1}}, \varphi_\Lambda ({\bf  W}))K_{J,{\bf T}_1}.
$$
Since ${\bf T}_1\in \cV_J(\cH)$, the noncommutative Poisson kernel $K_{J,{\bf T}_1}$ is an isometry, which implies
$$
 p({\bf T}_1,{\bf T}_2)=K_{J,{\bf T}_1}^*p({\bf B}\otimes I_{\CC^{d_1}}, \varphi_\Lambda ({\bf  W}))K_{J,{\bf T}_1}.
$$
Now, it is clear that
$$
\|[p_{rs}({\bf T}_1,{\bf T}_2)]_{k\times k}\|\leq   \|[p_{rs}({\bf B}\otimes I_{\CC^{d_1}}, \varphi_\Lambda ({\bf  W}))]_{k\times k}\|, \qquad [p_{rs}]_{k\times k}\in M_k(\CC\left<{\bf X}, {\bf Y}\right>), k\in \NN,
$$
 The proof is complete.
 \end{proof}

Denote by $\cQ_{\bf n}^*$ the set of all formal polynomials of the form
$q({\bf X}, {\bf Y})=\sum a_{\alpha,\beta, \gamma, \sigma} X_\alpha Y_\beta Y_\sigma^*X_\gamma^* $, with complex coefficients, where ${\bf X}:=\left< X_1,\ldots, X_{n_1}\right>$ and
${\bf Y}:=\left< Y_1,\ldots, Y_{n_2}\right>$. In what follows, we show that
if we drop the conditions $d_i:=\dim \cD_{{\bf T}_i}<\infty$ and $d_1+n_1d_2=d_2+n_2d_1=m$, in Theorem \ref{ando},  we can obtain the following And\^ o type inequality.

\begin{theorem} \label{ando1}
   Let ${\bf T}_1=(T_{1,1},\ldots, T_{1,n_1})$ and ${\bf T}_2:=(T_{2,1},\ldots, T_{2,n_2})$ be  row contractions with  the property that   ${\bf T}=({\bf T}_1, {\bf T}_2)\in {\bf P}_{c,{\bf n}}^-(\cH)$.  If $U\in \cU_{{\bf T}}^\cK$, then
$$
\|[q_{rs}({\bf T}_1,{\bf T}_2)]_{k}\|\leq   \|[q_{rs}({\bf V}_1,{\bf V}_2)]_{k}\|, \qquad [p_{rs}]_{k\times k}\in M_k(\cQ_{\bf n}^*),
$$
where ${\bf V}_1:={\bf B}\otimes I_{\cD_{{\bf T}_1}}$ and  ${\bf V}_2:= \varphi_{U^*}({\bf  W})$
 is the   multi-analytic operator associated with $U^*$.
\end{theorem}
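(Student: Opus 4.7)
The plan is to reduce the claim, via linearity and the standard matrix ampliation trick, to establishing a single compression identity for each monomial in $\cQ_{\bf n}^*$. Specifically, for $q = X_\alpha Y_\beta Y_\sigma^* X_\gamma^*$ with $\alpha,\gamma\in\FF_{n_1}^+$ and $\beta,\sigma\in\FF_{n_2}^+$, I aim to prove
$$q({\bf T}_1,{\bf T}_2) = K_{J,{\bf T}_1}^*\, q({\bf V}_1,{\bf V}_2)\, K_{J,{\bf T}_1}.$$
Once this identity is in place, passing to a matrix $[q_{rs}]_{k}$ and invoking the fact that ${\bf T}_1\in \cV_J(\cH)$ forces $K_{J,{\bf T}_1}$ to be an isometry immediately yields the claimed inequality, since the map $A\mapsto K_{J,{\bf T}_1}^*\,A\,K_{J,{\bf T}_1}$ is completely contractive.

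The starting point is Theorem \ref{dil2}, which supplies
$$K_{J,{\bf T}_1}\, T_{1,\alpha}^*\, T_{2,\beta}^* = (B_{1,\alpha}^*\otimes I_{\cD_{{\bf T}_1}})\,\varphi_\beta({\bf W})^*\, K_{J,{\bf T}_1}.$$
Taking adjoints, and using that the entries of ${\bf T}_1$ commute with those of ${\bf T}_2$ (hence $T_{2,\beta}T_{1,\alpha}=T_{1,\alpha}T_{2,\beta}$), gives the companion identity
$$T_{1,\alpha}\,T_{2,\beta}\, K_{J,{\bf T}_1}^* = K_{J,{\bf T}_1}^*\,\varphi_\beta({\bf W})\,(B_{1,\alpha}\otimes I_{\cD_{{\bf T}_1}}).$$
Inserting $K_{J,{\bf T}_1}^*K_{J,{\bf T}_1}=I$ between $T_{2,\beta}$ and $T_{2,\sigma}^*$ in the product $T_{1,\alpha}T_{2,\beta}T_{2,\sigma}^*T_{1,\gamma}^*$, rewriting $T_{2,\sigma}^*T_{1,\gamma}^*=T_{1,\gamma}^*T_{2,\sigma}^*$ (commutation of the adjoints, in reversed order), and applying the two displayed intertwinings produces
$$T_{1,\alpha}T_{2,\beta}T_{2,\sigma}^*T_{1,\gamma}^* = K_{J,{\bf T}_1}^*\,\varphi_\beta({\bf W})(B_{1,\alpha}\otimes I)(B_{1,\gamma}^*\otimes I)\varphi_\sigma({\bf W})^*\,K_{J,{\bf T}_1}.$$

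The final rearrangement exploits the multi-analytic nature of $\varphi_{U^*}({\bf W})$: each $\varphi_j({\bf W})\in\cR_{n_1}^\infty(\cV_J)\bar\otimes B(\cD_{{\bf T}_1})$ commutes with $B_{1,i}\otimes I_{\cD_{{\bf T}_1}}$ by definition, and taking adjoints shows that $\varphi_\sigma({\bf W})^*$ commutes with $B_{1,\gamma}^*\otimes I_{\cD_{{\bf T}_1}}$. Pulling the two $B$-factors next to one another therefore gives
$$q({\bf T}_1,{\bf T}_2) = K_{J,{\bf T}_1}^*\,(B_{1,\alpha}\otimes I)\,\varphi_\beta({\bf W})\,\varphi_\sigma({\bf W})^*\,(B_{1,\gamma}^*\otimes I)\,K_{J,{\bf T}_1} = K_{J,{\bf T}_1}^*\,q({\bf V}_1,{\bf V}_2)\,K_{J,{\bf T}_1},$$
which is the required compression identity. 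The only delicate step is the bookkeeping of commutations — of the entries of ${\bf T}_1$ with those of ${\bf T}_2$, and of each $\varphi_j({\bf W})$ with the ampliated universal model — which is what allows one to reorder factors so that the right-hand side genuinely reads as $q$ evaluated at $({\bf V}_1,{\bf V}_2)$. Everything else, including the passage to matrix polynomials, is routine once Theorem \ref{dil2} is in hand.
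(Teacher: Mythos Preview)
Your proof is correct and follows precisely the approach the paper indicates: it uses Theorem \ref{dil2} and mirrors the compression argument of Theorem \ref{ando}, with the additional (routine) bookkeeping needed to handle the starred factors in $\cQ_{\bf n}^*$. Since the paper omits the details and simply points to this template, your write-up is in fact more complete than the paper's own treatment.
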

\begin{proof} The proof uses Theorem \ref{dil2} and is similar to the proof of Theorem \ref{ando}. We shall omit it.
\end{proof}

\begin{lemma}\label{norm}
For any  polynomial $p\in \CC\left<{\bf X},{\bf Y}\right>$,  define
$$
\|p\|_u:=\sup \|p({\bf T}_1, {\bf T}_2)\|,
$$
where the supremum is taken over all pairs
$({\bf T}_1, {\bf T}_2)\in {\bf P}_{\bf n}^-(\cH)$ and any Hilbert space $\cH$.
Then  $\|\cdot\|_u$ defines an algebra  norm on $\CC\left<{\bf X},{\bf Y}\right>$.
\end{lemma}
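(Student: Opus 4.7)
The statement asserts the four standard norm axioms plus submultiplicativity, so the plan is to split the verification into (i) finiteness of the supremum, (ii) the easy axioms (homogeneity, triangle inequality, submultiplicativity), and (iii) definiteness ($\|p\|_u=0 \Rightarrow p=0$), which is the only part with any content.

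For finiteness, I would observe that if $[T_{1,1},\ldots,T_{1,n_1}]$ is a row contraction then each $T_{1,i}$ satisfies $\|T_{1,i}\|\le 1$, and likewise for the entries of ${\bf T}_2$. Writing $p=\sum a_{\alpha,\beta}X_\alpha Y_\beta$, the triangle inequality and submultiplicativity of the operator norm give
$$
\|p({\bf T}_1,{\bf T}_2)\|\;\le\;\sum_{\alpha,\beta}|a_{\alpha,\beta}|
$$
uniformly in $({\bf T}_1,{\bf T}_2)\in{\bf P}_{\bf n}^-(\cH)$, so $\|p\|_u<\infty$. The axioms in (ii) are immediate from the corresponding properties of the operator norm, taken pointwise and then passing to the supremum over pairs.

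The main (and essentially only) step is (iii). Here I would exhibit a concrete pair in ${\bf P}_{\bf n}^-(\cH)$ whose evaluation separates polynomials. Set $\cH:=F^2(H_{n_1})\otimes F^2(H_{n_2})$ with left creation tuples ${\bf S}=[S_1,\ldots,S_{n_1}]$ and ${\bf S}'=[S_1',\ldots,S_{n_2}']$, and for $r,s\in(0,1)$ define
$$
{\bf T}_1:=(rS_1\otimes I,\ldots,rS_{n_1}\otimes I),\qquad {\bf T}_2:=(sI\otimes S_1',\ldots,sI\otimes S_{n_2}').
$$
Operators on different tensor factors commute, so the cross-commutation required by ${\bf P}_{\bf n}^-$ holds automatically; and since $\|{\bf S}\|=\|{\bf S}'\|=1$, each tuple has row norm $<1$, hence $({\bf T}_1,{\bf T}_2)\in{\bf P}_{\bf n}(\cH)\subset{\bf P}_{\bf n}^-(\cH)$. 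Evaluating on the vacuum vector $1\otimes 1$ and using that $\{e_\alpha\otimes e'_\beta\}_{\alpha\in\FF_{n_1}^+,\beta\in\FF_{n_2}^+}$ is orthonormal in $\cH$, I obtain
$$
\|p({\bf T}_1,{\bf T}_2)(1\otimes 1)\|^2=\sum_{\alpha,\beta}|a_{\alpha,\beta}|^2\,r^{2|\alpha|}s^{2|\beta|}.
$$
If $\|p\|_u=0$ this quantity vanishes for every $r,s\in(0,1)$, forcing each coefficient $a_{\alpha,\beta}$ to be zero, so $p=0$.

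The main conceptual point, and the only place one has to think, is step (iii): scalar evaluations at points of $\BB_{n_1}\times\BB_{n_2}$ are insufficient because they collapse the noncommutativity within each block ${\bf X}$ and ${\bf Y}$. The choice of $(r{\bf S}\otimes I,\,sI\otimes{\bf S}')$ is the natural way to keep the coordinates within each block noncommuting while guaranteeing cross-commutation, which is exactly what the bi-ball requires. No results beyond the basic Fock-space setup recalled in Section~1 are needed.
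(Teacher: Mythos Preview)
Your proof is correct, and in fact the definiteness step (iii) is handled more cleanly than in the paper. The paper works on a \emph{single} Fock space $F^2(H_{n_1})$ (assuming $n_2\le n_1$) and tests against the pairs
\[
(z_1S_1,\ldots,z_{n_1}S_{n_1},\,w_1R_1,\ldots,w_{n_2}R_{n_2}),
\]
using left and right creation operators with scalar parameters $z_i,w_j\in\DD$. Because $S_\alpha R_\beta(1)=e_{\alpha\tilde\beta}$, distinct pairs $(\alpha,\beta)$ can produce the same basis vector, so the paper must group together all $(\alpha,\beta)$ with a given concatenation $\alpha\tilde\beta$ and then separate the coefficients by a sequential ``set $z_{i_1}=0$, divide, set $z_{i_2}=0,\ldots$'' argument.

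Your choice of $(r{\bf S}\otimes I,\,sI\otimes{\bf S}')$ on the tensor product $F^2(H_{n_1})\otimes F^2(H_{n_2})$ sidesteps this entirely: the vectors $e_\alpha\otimes e'_\beta$ are already pairwise orthogonal across all $(\alpha,\beta)$, so a single evaluation at the vacuum kills every coefficient at once (you do not even need to vary $r,s$). The tradeoff is only that you introduce a second Fock space, whereas the paper's construction stays within the objects already in play in Section~1; but your route is shorter and conceptually more transparent.
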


 \begin{proof}
 The only non  trivial fact is that $\|p\|_u=0$ implies $p=0$.
Let $p=\sum_{|\alpha|\leq m, |\beta|\leq m} a_{\alpha, \beta} X_\alpha Y_\beta$, where $\alpha\in \FF_{n_1}^+, \beta\in \FF_{n_2}^+$,  and assume that $\|p\|_u=0$. Without loss of generality we can assume that $n_2\leq n_1$ and $\FF_{n_2}^+\subset \FF_{n_1}^+$.
If $z_i, w_j \in \DD$ for any $i\in \{1,\ldots, n_1\}$ and $j\in \{1,\ldots, n_2\}$, then
$$
(z_1S_1,\ldots, z_{n_1}S_{n_1}, w_1R_1,\ldots, w_{n_2} R_{n_2})\in {\bf P}_{\bf n}^-(F^2(H_{n_1})),
$$
where $S_1,\ldots, S_{n_1}$ and $R_1,\ldots,  R_{n_2}$ are creation operators on the full Fock space $F^2(H_{n_1})$.
Fix $\alpha_0\in \FF_{n_1}^+, \beta_0\in \FF_{n_2}^+$ with $|\alpha_0|\leq m $ and  $|\beta_0|\leq m$ and let $ k:=|\alpha_0|+|\beta_0|$.
Since $\|p\|_u=0$, we have
$\sum_{|\alpha|\leq m, |\beta|\leq m} a_{\alpha, \beta}z_\alpha w_\beta S_\alpha R_\beta=0$ for any $z_i, w_j \in \DD$. Consequently,
$$
\left<\sum_{|\alpha|\leq m, |\beta|\leq m} a_{\alpha, \beta}z_\alpha w_\beta S_\alpha R_\beta (1), e_{\alpha_0\tilde \beta_0}\right>=0,
$$
where $\widetilde\sigma$ is the reverse of $\sigma\in \FF_{n_1}^+$,
which implies $\sum a_{\alpha, \beta} z_\alpha w_\beta=0$, where the sum is taken over all $|\alpha|\leq m, |\beta|\leq m$  such that $\alpha \tilde\beta=\alpha_0\tilde\beta_0$.
Consequently, if we assume that $\alpha_0\tilde\beta_0=g_{i_1}\cdots g_{i_k}$, the latter equality becomes
\begin{equation}  \label{SS}
\begin{split}
a_{g_0, g_{i_k}\cdots g_{i_1}} w_{i_1}\cdots w_{i_k}&+
a_{g_{i_1}, g_{i_k}\cdots g_{i_2}} z_{i_1}w_{i_2}\cdots w_{i_k}\\
&+ a_{g_{i_1}g_{i_2}, g_{i_k}\cdots g_{i_3}} z_{i_1} z_{i_2} w_{i_3}\cdots w_{i_k}
+\cdots +a_{g_{i_1}\cdots g_{i_k},g_0} z_{i_1}\cdots z_{i_k}=0
\end{split}
\end{equation}
for any $z_i, w_j \in \DD$.
Taking $z_{i_1}=0$ in relation \eqref{SS}, we get $a_{g_0, g_{i_k}\cdots g_{i_1}}=0$. Dividing the resulting equality by $z_{i_1}\neq 0$ and taking $z_{i_2}=0$, we obtain $a_{g_{i_1}, g_{i_k}\cdots g_{i_2}}=0$. Continuing this process, we deduce that $a_{\alpha, \beta}=0$ for any $\alpha\in \FF_{n_1}^+, \beta\in \FF_{n_2}^+$ with
 $|\alpha|\leq m, |\beta|\leq m$  and  such that $\alpha \tilde\beta=\alpha_0\tilde\beta_0$. In particular, we have $a_{\alpha_0, \beta_0}=0$. This completes the proof.
\end{proof}

As in Lemma \ref{norm}, if for $[p_{ij}]\in M_k(\CC\left<{\bf X},{\bf Y}\right>)$, we set
$$
\|[p_{ij}]\|_{u,k}:=\sup \|[p_{ij}({\bf T}_1, {\bf T}_2)]\|,
$$
where the supremum is taken over all pairs
$({\bf T}_1, {\bf T}_2)\in {\bf P}_{\bf n}^-(\cH)$ and any Hilbert space $\cH$, we obtain a sequence of norms on the matrices over $\CC\left<{\bf X},{\bf Y}\right>$. We call $\left(\CC\left<{\bf X},{\bf Y}\right>, \|\cdot\|_{u,k}\right)$ the universal operator algebra for the bi-ball ${\bf P}_{\bf n}^-$.

In what follows, we  prove that the abstract bi-ball
${\bf P}_{\bf n}^-:=\{{\bf P}_{\bf n}^-(\cH): \ \cH \text{ is a Hilbert space }\}$, where ${\bf n}=(n_1,n_2)\in \NN^2$,
has a universal model $$({\bf S}\otimes I_{\ell^2},  \varphi({\bf  R}))\in
{\bf P}_{\bf n}^-(F^2(H_{n_1})\otimes \ell^2),
 $$
 where ${\bf S}=(S_1,\ldots, S_{n_1})$ and ${\bf R}=(R_1,\ldots, R_{n_1})$ are the left and right creation operators on the full Fock space $F^2(H_{n_1})$, respectively, and
$\varphi({\bf  R})=(\varphi_1({\bf R}),\ldots, \varphi_{n_2}({\bf R})$ is a contractive multi-analytic operator with $\varphi_j({\bf  R})\in B(F^2(H_{n_1})\otimes \ell^2)$.
The next result shows that $\left(\CC\left<{\bf X},{\bf Y}\right>, \|\cdot\|_{u,k}\right)$ can be realized completely isometrically isomorphic as a concrete algebra of operators.

\begin{theorem} \label{ando11}  There is a contractive   operator $\varphi({\bf  R})=(\varphi_1({\bf  R}),\ldots, \varphi_{n_2}({\bf  R}))$, where each $\varphi_j({\bf  R})$ is a multi-analytic operator acting on $F^2(H_n)\otimes \ell^2$,
such that
$$
\|[p_{rs}({\bf T}_1,{\bf T}_2)]_{k}\|\leq  \|[p_{rs}({\bf S}\otimes I_{\ell^2},  \varphi({\bf  R}))]_{k}\|, \qquad  p_{rs}\in \CC\left<{\bf X}, {\bf Y}\right>,
$$
 for any  $({\bf T}_1, {\bf T}_2)\in {{\bf P}_{{\bf n}}^-}(\cH)$ and  $k\in \NN$.
\end{theorem}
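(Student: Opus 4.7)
My plan is to realize $({\bf S}\otimes I_{\ell^2},\varphi({\bf R}))$ as a direct sum of pair-dependent models supplied by Theorem~\ref{ando1}, indexed along a countable family of pairs in ${\bf P}_{\bf n}^-$ that jointly saturates the matricial universal norms $\|\cdot\|_{u,k}$ defined after Lemma~\ref{norm}. Conceptually this is the standard recipe for extracting a universal representation of a separable operator algebra out of a dense family of contractive representations.

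For the countable family: $\CC\left<{\bf X},{\bf Y}\right>$ has a countable Hamel basis of monomials $X_\alpha Y_\beta$, so each $M_k(\CC\left<{\bf X},{\bf Y}\right>)$ contains a countable $\|\cdot\|_{u,k}$-dense subset, and I enumerate the union over $k$ as $\{P_j\}_{j=1}^\infty$ with $P_j\in M_{k_j}(\CC\left<{\bf X},{\bf Y}\right>)$. For each $j,m\in\NN$ the definition of the supremum yields a pair $({\bf T}_1^{(j,m)},{\bf T}_2^{(j,m)})\in{\bf P}_{\bf n}^-(\cH_{j,m})$ with
$$\|P_j({\bf T}_1^{(j,m)},{\bf T}_2^{(j,m)})\|\geq \|P_j\|_{u,k_j}-1/m,$$
and passing to the reducing subspace generated by countably many vectors realizing this norm makes $\cH_{j,m}$ separable without disturbing the inequality. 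A further replacement of the pair by $(r_{j,m}{\bf T}_1^{(j,m)},r_{j,m}{\bf T}_2^{(j,m)})$ with $r_{j,m}\uparrow 1$ (and norm-continuity of polynomial evaluation) arranges that ${\bf T}_1^{(j,m)}$ is a pure row contraction, so $({\bf T}_1^{(j,m)},{\bf T}_2^{(j,m)})\in {\bf P}_{c,{\bf n}}^-(\cH_{j,m})$ for the trivial ideal $J=\{0\}$, whose universal models are ${\bf B}={\bf S}$ and ${\bf W}={\bf R}$.

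I then apply Theorem~\ref{ando1} to each such pair, with any choice of $U^{(j,m)}\in\cU^\cK_{{\bf T}^{(j,m)}}$, to produce a contractive multi-analytic operator $\varphi^{(j,m)}({\bf R})=(\varphi_1^{(j,m)}({\bf R}),\ldots,\varphi_{n_2}^{(j,m)}({\bf R}))$ with each $\varphi_l^{(j,m)}({\bf R})\in\cR_{n_1}^\infty\bar\otimes B(\cD_{j,m})$, $\cD_{j,m}:=\cD_{{\bf T}_1^{(j,m)}}$ separable, satisfying
$$\|[p_{rs}({\bf T}_1^{(j,m)},{\bf T}_2^{(j,m)})]_k\|\leq\|[p_{rs}({\bf S}\otimes I_{\cD_{j,m}},\varphi^{(j,m)}({\bf R}))]_k\|$$
for every matrix polynomial. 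Setting $\cE:=\bigoplus_{j,m}\cD_{j,m}$, which is a separable Hilbert space that we identify with $\ell^2$ (padding with zero summands if it is finite-dimensional), the coordinatewise direct sum $\varphi_l({\bf R}):=\bigoplus_{j,m}\varphi_l^{(j,m)}({\bf R})\in\cR_{n_1}^\infty\bar\otimes B(\cE)$ is again a contractive multi-analytic operator (since intertwining with $S_{1,i}\otimes I$ is preserved under direct sums and the class of multi-analytic operators is SOT-closed). Write $\varphi({\bf R}):=(\varphi_1({\bf R}),\ldots,\varphi_{n_2}({\bf R}))$.

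To verify: each $F^2(H_{n_1})\otimes \cD_{j,m}$ jointly reduces $({\bf S}\otimes I_\cE,\varphi({\bf R}))$ to $({\bf S}\otimes I_{\cD_{j,m}},\varphi^{(j,m)}({\bf R}))$, giving
$$\|[p_{rs}({\bf S}\otimes I_{\cD_{j,m}},\varphi^{(j,m)}({\bf R}))]_k\|\leq\|[p_{rs}({\bf S}\otimes I_\cE,\varphi({\bf R}))]_k\|.$$
Letting $m\to\infty$ yields $\|P_j\|_{u,k_j}\leq \|P_j({\bf S}\otimes I_\cE,\varphi({\bf R}))\|$ for every $j$; density of $\{P_j\}$ together with the definition of $\|\cdot\|_{u,k}$ propagates the inequality to all $[p_{rs}]_k\in M_k(\CC\left<{\bf X},{\bf Y}\right>)$, and combining with the trivial bound $\|[p_{rs}({\bf T}_1,{\bf T}_2)]_k\|\leq\|[p_{rs}]\|_{u,k}$ for any $({\bf T}_1,{\bf T}_2)\in{\bf P}_{\bf n}^-(\cH)$ delivers the desired estimate. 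The principal technical hurdles I expect are the scaling step (confirming that $P_j$ evaluated at $(r_{j,m}{\bf T}_1^{(j,m)},r_{j,m}{\bf T}_2^{(j,m)})$ still approaches $\|P_j\|_{u,k_j}$) and the verification that $\bigoplus_{j,m}\varphi_l^{(j,m)}({\bf R})$ defines a bona fide element of $\cR_{n_1}^\infty\bar\otimes B(\ell^2)$; both reduce to straightforward checks inside the framework of Section~1.
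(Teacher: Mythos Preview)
Your proposal is correct and follows essentially the same approach as the paper: apply Theorem~\ref{ando1} (with $J=\{0\}$) to a countable family of pairs in ${\bf P}_{\bf n}^-$ that saturates the universal matricial norms, take the direct sum of the resulting contractive multi-analytic operators, and pass to all matrix polynomials by density. The only organizational difference is that the paper performs a two-stage direct sum (first over a sequence realizing $\|[p_{ij}^{(s)}]_k\|_u$ for each rational matrix polynomial, then over $k$), whereas you handle all sizes $k$ in a single pass---this is cosmetic, not substantive.
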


\begin{proof}
Given a polynomial $p\in \CC\left<{\bf X},{\bf Y}\right>$, we define
$
\|p\|_u:=\sup \|p({\bf T}_1, {\bf T}_2)\|,
$
where the supremum is taken over all pairs
$({\bf T}_1, {\bf T}_2)\in {\bf P}_{\bf n}^-(\cH)$ and any Hilbert space $\cH$. Using a standard argument, one can prove that the supremum is the same if we consider  only infinite dimensional separable Hilbert spaces.  Since $r{\bf T}$ is a pure row contraction for any $r\in [0,1)$, it is clear that
$$
\|p\|_u=\sup_{({\bf T}_1, {\bf T}_2)\in {\bf P}_{\bf n}, {\bf T}_1 \text{ pure}} \|p({\bf T}_1, {\bf T}_2)\|.
$$
Fix $[p_{ij}]_k\in M_k(\CC\left<{\bf X}, {\bf Y}\right>)$ and  choose a sequence $\left\{({\bf T}_1^{(m)}, {\bf T}_2^{(m)})\right\}_{m=1}^\infty$  in ${\bf P}_{\bf n}^-(\cH) $ with $\cH$ separable and ${\bf T}_1^{(m)}$ pure row contraction  such that
\begin{equation}
\label{sup2}
\|[p_{ij}]_k\|_u=\sup_{m}\|[p_{ij}({\bf T}_1^{(m)}, {\bf T}_2^{(m)})]_k\|.
\end{equation}
 Using Theorem \ref{ando1}, in the particular case when $J=\{0\}$,  we find,  for each $m\in \NN$,  a contractive operator $\varphi^{(m)}({\bf  R}):=(\varphi_1^{(m)}({\bf  R}),\ldots, \varphi_{n_2}^{(m)}({\bf  R}))$, where each $\varphi_j^{(m)}({\bf  R})$ is a multi-analytic operator acting on $F^2(H_{n_1})\otimes \CC^{d(m)}$, where $d(m):=\cD_{{\bf T}_1^{(m)}}$, such that
$$
\|[p_{ij}({\bf T}_1^{(m)},{\bf T}_2^{(m)})]_k\|\leq  \|[p_{ij}({\bf S}\otimes I_{\CC^{d(m)}},\varphi^{(m)}({\bf R}))]_{k }\|.
$$
Consequently, relation \eqref{sup2} implies
\begin{equation*}
\begin{split}
\|[p_{ij}]_k\|_u&=\|[p_{ij}(\oplus_{m=1}^\infty {\bf T}_1^{(m)}, \oplus_{m=1}^\infty {\bf T}_2^{(m)})]_k\|\\
&\leq
\|[p_{ij}(\oplus_{m=1}^\infty  ({\bf S}\otimes I_{\CC^{d(m)}}), \oplus_{m=1}^\infty \varphi^{(m)}({\bf R}))]_k\|\leq \|[p_{ij}]_k\|_u,
\end{split}
\end{equation*}
where $\oplus_{m=1}^\infty {\bf T}_1^{(m)}$ denotes the row contraction
$\left[\oplus_{m=1}^\infty {T}_{1,1}^{(m)},\ldots,\oplus_{m=1}^\infty { T}_{1,n_1}^{(m)}\right]$.
This shows  that
\begin{equation}
\label{ma22}
\|[p_{ij}]_k\|_u= \|[p_{ij}({\bf S}\otimes I_{ \ell^2},\psi({\bf R}))]_{k }\|,
\end{equation}
where $\psi({\bf R})=(\psi_1({\bf R}),\ldots, \psi_{n_2}({\bf R})):=\oplus_{m=1}^\infty \varphi^{(m)}({\bf R}))$ is
a contractive   operator and  $\psi_j({\bf R})$ is  a multi-analytic operator on the Fock  space $F^2(H_{n_1})\otimes \ell^2$.
Let $\CC_\QQ\left<{\bf X}, {\bf Y}\right>$ be the set of all polynomials with coefficients in $\QQ+i\QQ$, and let
$
[p^{(1)}_{ij}]_k, [p^{(2)}_{ij}]_k, \ldots$ be an enumeration of  the set $\{[p_{ij}]_k: \ p_{ij}\in \CC_\QQ\left<{\bf X}, {\bf Y}\right>\}$.
Due to relation \eqref{ma22}, for each $s\in \NN$, there is a contractive  operator  $\psi^{(s)}({\bf R})=(\psi^{(s)}_1({\bf R}),\ldots, \psi^{(s)}_{n_2}({\bf R}))$, where  $\psi^{(s)}({\bf R})$ is a  multi-analytic operator  on $F^2(H_{n_1})\otimes \ell^2$, such that
\begin{equation}
\label{ma222}
\|[p^{(s)}_{ij}]_k\|_u= \|[p_{ij}^{(s)}({\bf S}\otimes I_{ \ell^2},\psi^{(s)}({\bf R}))]_{k }\|,\qquad s\in \NN.
\end{equation}
Define the contractive multi-analytic operator $\Psi_k({\bf R}):=\oplus_{s=1}^\infty \psi^{(s)}({\bf R})$  and let us prove that
\begin{equation}
\label{qqij}
\|[q_{ij}]_k\|_u= \|[q_{ij}({\bf S}\otimes I_{ \ell^2},\Psi_k({\bf R}))]_{k }\|
\end{equation}
for any $[q_{ij}]_k\in M_k(\CC\left<{\bf X}, {\bf Y}\right>)$.
Note that relation \eqref{ma22} implies
\begin{equation}
\label{Ga2}
\|[p^{(s)}_{ij}]_k\|_u= \|[p_{ij}^{(s)}({\bf S}\otimes I_{ \ell^2},\Psi_k({\bf R}))]_{k }\|\qquad \text{for any } s\in \NN.
\end{equation}
 Fix $[q_{ij}]_k\in M_k(\CC\left<{\bf X}, {\bf Y}\right>)$ and $\epsilon >0$,  and choose $[p^{(s_0)}_{ij}]_k$ such that
 \begin{equation}\label{u2}
 \left\|[q_{ij}]_k-[p^{(s_0)}_{ij}]_k\right\|_u<\epsilon.
 \end{equation}
 Using relations \eqref{ma22}, \eqref{u2}, and \eqref{Ga2}, we deduce that there is a contractive multi-analytic operator $\psi^{(q)}({\bf R})$ on the Fock space $F^2(H_{n_1})\otimes \ell^2$ such that
 \begin{equation*}
 \begin{split}
 \|[q_{ij}]_k\|_u&=\|[q_{ij}({\bf S}\otimes I_{ \ell^2},\psi^{(q)}({\bf R}))]_{k }\|\\
 &
\leq\|[p_{ij}^{(s_0)}({\bf S}\otimes I_{ \ell^2},\psi^{(q)}({\bf R}))]_{k }\| +\epsilon\\
 &\leq \|[p_{ij}^{(s_0)}]_k\|_u+\epsilon\\
 &=
 \|[p_{ij}^{(s_0)}({\bf S}\otimes I_{ \ell^2},\Psi_k({\bf R}))]_{k }\|+\epsilon\\
 &\leq
 \|[q_{ij}({\bf S}\otimes I_{ \ell^2},\Psi_k({\bf R}))]_{k }\|+2\epsilon
 \end{split}
 \end{equation*}
for any $\epsilon>0$, which proves  relation \eqref{qqij}.
Note  that  $\varphi({\bf R}):=\oplus_{k=1}^\infty \Psi_k({\bf R})$ is
 a contractive multi-analytic operator  and
\begin{equation*}
\|[q_{ij}]_k\|_u= \|[q_{ij}({\bf S}\otimes I_{ \ell^2},\varphi({\bf R}))]_{k }\|
\end{equation*}
for any $[q_{ij}]_k\in M_k(\CC\left<{\bf X}, {\bf Y}\right>)$ and any $k\in \NN$.
The proof is complete.

\end{proof}

 The closed non-self-adjoint algebra generated by
$ S_1\otimes I_{\ell^2},\ldots, S_{n_1}\otimes I_{\ell^2}, \varphi_1({\bf R}),\ldots, \varphi_{n_2}({\bf R})$ and the identity is denoted by
$\cA({\bf P_n})$. In light of Theorem \ref{ando11} ,  $\cA({\bf P_n})$ can be seen as the universal operator algebra of the  bi-ball.
An important open question remains. Can one choose  $(\varphi_1({\bf R}),\ldots, \varphi_{n_2}({\bf R}))$ to be  an  isometry$?$ In the next section, we will see that the answer is positive when $n_1=n_2=1$.

We remark that the noncommutative variety
$${{\bf P}_{c,{\bf n}}^-}(\cH):=\left\{({\bf T}_1, {\bf T}_2)\in {\bf P}_{\bf n}^-(\cH): {\bf T}_1\in \cV_J(\cH)\right\}
$$
 also has a universal model. Similarly to the proof of Theorem \ref{ando11}, one can show that  there is a contractive   operator $\varphi({\bf  W})=(\varphi_1({\bf  W}),\ldots, \varphi_{n_2}({\bf  W}))$, where each $\varphi_j({\bf  W})$ is a multi-analytic operator, with respect to ${\bf B}$, acting on $\cN_J\otimes \ell^2$,
such that
$$
\|[p_{rs}({\bf T}_1,{\bf T}_2)]_{k}\|\leq  \|[p_{rs}({\bf B}\otimes I_{\ell^2},  \varphi({\bf  W}))]_{k}\|, \qquad  p_{rs}\in \CC\left<{\bf X}, {\bf Y}\right>,
$$
 for any  $({\bf T}_1, {\bf T}_2)\in {{\bf P}_{c,{\bf n}}^-}(\cH)$ and  $k\in \NN$.

\bigskip

\section{ Sharper And\^ o type   inequalities  on varieties, universal models}

In this section, we provide sharper And\^ o type inequalities  for commuting contractions when at least one of them is of class $\cC_0$. In particular, for  commuting contractive matrices with spectrum in the open unit disc, we obtain an inequality which is sharper then Agler-McCarthy's inequality and Das-Sarkar's extension.  We   obtain  more general inequalities  for arbitrary commuting contractive matrices. Finally, we provide a model for  the universal operator algebra for two commuting contractions.

  In the particular case when $n=1$,  the noncommutative analytic Toeplitz algebra $F_n^\infty$ can be identified with the classical Hardy algebra
$H^\infty$. Every  $w^*$-closed  ideal $J\neq \{0\}$ of $H^\infty$  has the form $J_\psi=\psi H^\infty$, where $\psi\in H^\infty$ is an inner function uniquely determined up to a constant of modulus 1 (see \cite{Ga}). In this case
$$\cN_{J_\psi}=H^2\ominus \psi H^2=\ker \psi(S)^*,
$$
where  $S$ is the unilateral shift on the Hardy space $H^2$.
 An operator   $X\in B(\cH)$ is in the variety $\cV_{J_\psi}(\cH)$ if and only if  $X$ is  a completely non-unitary contraction such that $\psi(X)=0$.  The universal model of $\cV_{J_\psi}(\cH)$   is the truncated shift $B:=P_{\cN_{J_\psi}}S|_{\cN_{J_\psi}}$. When $J=\{0\}$, we take $\psi=0$. In this case, we take   $\cN_{\{0\}}=H^2$ and define  $\cV_{\{0\}}(\cH)$ to be  the set of all pure contraction $T\in B(\cH)$, i.e. ${T^*}^n\to 0$ as $n\to \infty$.
 The universal model of $\cV_{\{0\}}(\cH)$ is the unilateral shift $S$.

Recall that $\cU_{\bf T}$ is the set of all  unitary extensions of the isometry given by relation \eqref{iso1}. According to Lemma \ref{strong-limit}, for each $U\in \cU_{\bf T}$,
 the strong operator topology limit
$\varphi_{U^*}({\bf R}):=\text{\rm SOT-}\lim_{r\to 1}\varphi_{U^*}(r{\bf R})
$
 exists and defines a contractive  multi-analytic  operator.

\begin{theorem} \label{A-M2}
 Let $T_1\in \cV_{J_{\psi_1}}$ and $T_2\in \cV_{J_{\psi_2}}$ be  commuting contractions with    $d_i:=\dim \cD_{T_i}<\infty$. Then, $\varphi_{U_i^*}(S)$ is an isometric analytic Toeplitz operator on $H^2(\DD)\otimes \CC^{d_i}$ for any unitary operators  $U_1\in \cU_{(T_1,T_2)}$ and  $U_2\in \cU_{(T_2,T_1)}$,  and
$$
\|[p_{rs}(T_1,T_2)]_{k}\|\leq  \min \left\{\|[p_{rs}(B_1\otimes I_{\CC^{d_1}},\varphi_{U_1^*}(B_1))]_{k}\|, \|[p_{rs}(\varphi_{U_2^*}(B_2), B_2\otimes I_{\CC^{d_2}})]_{k}\|\right\}
$$
for any $ [p_{rs}]_{k}\in M_k(\CC[z,w])$,
where $B_i:=P_{\cN_{J_{\psi_i}}}S|_{\cN_{J_{\psi_i}}}$ is the universal  model of the variety $\cV_{J_{\psi_i}}$.
\end{theorem}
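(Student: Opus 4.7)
The plan is to derive the inequality as a direct specialization of Theorem \ref{ando} applied in both orderings of the pair $(T_1,T_2)$, and to prove the isometric claim by analyzing the transfer function of a finite-dimensional unitary colligation.

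For the inequality I would proceed as follows. Since $n_1=n_2=1$, the dimensional constraint $d_1+n_1d_2=d_2+n_2d_1$ in Theorem \ref{ando} reduces to the tautology $d_1+d_2=d_2+d_1$, so both $\cU_{(T_1,T_2)}$ and $\cU_{(T_2,T_1)}$ are nonempty. First I would apply Theorem \ref{ando} to $(T_1,T_2)\in{\bf P}_{c,(1,1)}^-(\cH)$ with $J=J_{\psi_1}$ and any $U_1\in\cU_{(T_1,T_2)}$, noting that for $n_1=1$ the right universal model $W_1$ coincides with $B_1$. This produces
$$\|[p_{rs}(T_1,T_2)]_k\|\le\|[p_{rs}(B_1\otimes I_{\CC^{d_1}},\varphi_{U_1^*}(B_1))]_k\|.$$
Next I would apply the same theorem to the swapped pair $(T_2,T_1)$ with $J=J_{\psi_2}$; since $T_1T_2=T_2T_1$, the identity $p_{rs}(T_1,T_2)=\tilde p_{rs}(T_2,T_1)$ with $\tilde p(z,w):=p(w,z)$ turns the resulting bound into $\|[p_{rs}(T_1,T_2)]_k\|\le\|[p_{rs}(\varphi_{U_2^*}(B_2),B_2\otimes I_{\CC^{d_2}})]_k\|$. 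Taking the minimum yields the claimed estimate.

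For the isometric claim I would argue as follows for $\varphi_{U_1^*}(S)$, the case of $\varphi_{U_2^*}(S)$ being identical. By Lemma \ref{strong-limit}, and using $R=S$ when $n=1$, the operator $\varphi_{U_1^*}(S)$ is a contractive multi-analytic operator on $H^2(\DD)\otimes\CC^{d_1}$, hence equals multiplication by the matrix-valued $H^\infty$ function
$$\Theta(z)=A^*+zC^*(I-zD^*)^{-1}B^*,$$
where the block decomposition $U_1=\left[\begin{smallmatrix}A&B\\C&D\end{smallmatrix}\right]$ is a finite-dimensional unitary on $\cD_{T_1}\oplus\cD_{T_2}$. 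Isometry of $\varphi_{U_1^*}(S)$ is equivalent to $\Theta$ being matrix-inner. I would first peel off the unitary part of $D$: if $D\xi=\lambda\xi$ with $|\lambda|=1$, then the block identities $B^*B+D^*D=I$ and $CC^*+DD^*=I$ coming from unitarity of $U_1$ force $B\xi=0$ and $C^*\xi=0$, so the maximal $D$-unitary subspace $\cE_u\subset\cD_{T_2}$ reduces the colligation and contributes nothing to $\Theta$. After this reduction the remaining block $D_c$ acts on $\cE_c:=\cD_{T_2}\ominus\cE_u$ as a finite-dimensional completely non-unitary contraction, so its spectrum lies strictly inside $\DD$ and $(I-zD_c^*)^{-1}$ extends holomorphically to a neighborhood of $\overline\DD$.

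Evaluating the identity from the proof of Lemma \ref{strong-limit},
$$I-\Theta(z)^*\Theta(z)=(1-|z|^2)B_c(I-\bar zD_c)^{-1}(I-zD_c^*)^{-1}B_c^*,$$
continuously at $|z|=1$ annihilates the right-hand side, giving $\Theta(e^{i\theta})^*\Theta(e^{i\theta})=I$ for every $\theta$. Since $A^*$ maps $\cD_{T_1}$ to $\cD_{T_1}$, the function $\Theta(z)$ is $d_1\times d_1$ \emph{square}, so boundary isometry forces boundary unitarity; hence $\Theta$ is matrix-inner and $\varphi_{U_1^*}(S)=M_\Theta$ is an isometric analytic Toeplitz operator on $H^2(\DD)\otimes\CC^{d_1}$. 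The main obstacle is precisely this isometry upgrade: contractivity of $\Theta$ is built into the construction, but the inner property rests on (i) the finite dimensionality of $\cD_{T_2}$ (so the cnu part of $D$ has spectrum strictly inside $\DD$, making the transfer function extend continuously to the boundary) and (ii) the square shape of $\Theta$ (so that boundary isometry equals boundary unitarity). Interestingly, the $C_0$ hypotheses $T_i\in\cV_{J_{\psi_i}}$ play no role in this step; the finite-defect hypothesis alone suffices.
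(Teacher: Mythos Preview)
Your proof is correct and follows essentially the same route as the paper: you invoke Theorem \ref{ando} in both orderings for the inequality, and for the isometry claim you use the same key structural facts---decompose $D$ into its unitary and c.n.u.\ parts, observe (via the block identities from unitarity of $U_1$) that $\operatorname{range} B^*$ lies in the c.n.u.\ part, and exploit that a finite-dimensional c.n.u.\ contraction has spectral radius strictly less than $1$. The only cosmetic difference is that the paper verifies the abstract isometry criterion \eqref{cond-iso} from Lemma \ref{strong-limit} at the operator level, whereas you pass to the matrix symbol $\Theta(z)$ and evaluate the transfer-function identity continuously at the boundary; your closing remark that the $\cC_0$ hypotheses play no role in the isometry step is also in line with the paper's logic.
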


\begin{proof} Applying  Theorem \ref{ando}, in the particular case when $n_1=n_2=1$ and $U_1\in \cU_{(T_1,T_2)}$, we find  an  analytic Toeplitz operator $\varphi_{U_1^*}(S)$ on $H^2(\DD)\otimes \CC^{d_1}$  such that,  for any $ [p_{rs}]_{k}\in M_k(\CC[z,w])$,
$$
\|[p_{rs}(T_1,T_2)]_{k}\|\leq   \|[p_{rs}(B_1\otimes I_{\CC^{d_1}},\varphi_{U_1^*}(B_1))]_{k}\|.
$$
We prove that  $\varphi_{U_1^*}({\bf R})$ is  an isometry for any $U_1\in \cU_{(T_1, T_2)}$.
Indeed, in this case we have
$$U_1^*=\left[ \begin{matrix} A^*&C^*\\B^*&D^* \end{matrix}\right]:\begin{matrix}\cD_{T_1}\\\oplus\\
\cD_{T_2}\end{matrix}\to
\begin{matrix}\cD_{T_1}\\\oplus\\ \cD_{T_2}\end{matrix}
$$
Note that the condition \eqref{cond-iso} becomes
$$
\lim_{r\to 1}(1-r^2) \left\|\left(I-rS\otimes D^*\right)^{-1}(I\otimes B^*)x\right\|^2=0
$$
for any $x\in H^2(\DD)\otimes \cD_{T_1}$. We shall  check that this condition is satisfied. Since $D:\cD_{T_2}\to \cD_{T_2}$ is a contraction, there is an orthogonal decomposition $
\cD_{T_2}=\cH_u\oplus \cH_s$, such that $\cH_u$ and $\cH_s$ are reducing subspaces for $D$ with the property that  $D_1:=D|_{\cH_u}:\cH_u\to \cH_u$ is a unitary operator and $D_2:=D|_{\cH_s}:\cH_s\to \cH_s$ is a completely non-unitary contraction. Note that since $D_2$ is acting on a finite dimensional space, its spectral radius is strictly less than $1$. Indeed, assume that there is $\lambda\in \TT$ and $v\neq 0$ in $\cH_s$ such that $D_2v=\lambda v$. Since $D_2$ is a contraction, we also have $D_2^*v=\bar \lambda v$, which shows that the restriction of $D_2$ to the  span of $v$ is a unitary operator, contradicting that $D_2$ is completely non-unitary.
On the other hand, since $U_1$ is a unitary operator, we have $CC^*+DD^*=I$ and $AC^*+BD^*=0$. If $x\in \cH_u$, then $\|Dx\|=\|x\|$ and, due to the first relation above, we must have $C^*x=0$. Hence, and using the second relation, we deduce that $B(D^*x)=0$ for any $x\in \cH_u$, which shows that $\cH_u\subset \ker B$. Now, since $\cD_{T_2}=\text{\rm range} B^*\oplus \ker B$, we have
$$
 \text{\rm range}\, B^*=\ker B^{\perp}\subset \cH_u^\perp=\cH_s.
 $$
Consequently, for any $x\in H^2(\DD)\otimes \cD_{T_1}$,  we have
$y:=(I\otimes B^*)x\in H^2(\DD)\otimes \cH_s$ and, therefore,
$$
\left(I-rS\otimes D^*\right)^{-1}(I\otimes B^*)x=\left(I-rS\otimes D_2^*\right)^{-1}y.
$$
Since $\|S\otimes D_2\|<1$,  we have
$$\lim_{r\to 1} \left(I-rS\otimes D_2^*\right)^{-1}y=\left(I-S\otimes D_2^*\right)^{-1}y,
$$
and, consequently, relation \eqref{cond-iso} is satisfied, which proves that $\varphi_{U_1^*}({\bf R})$ is  an isometry for any $U_1\in \cU_{\bf T}$.

In a similar fashion, one can show that if $U_2\in \cU_{(T_2, T_1)}$ then
$\varphi_{U_2^*}(S)$ is an isometric analytic Toeplitz operator on $H^2(\DD)\otimes \CC^{d_2}$ and
$$
\|[p_{rs}(T_1,T_2)]_{k}\|\leq   \|[p_{rs}(\varphi_{U_2^*}(B_2), B_2\otimes I_{\CC^{d_2}})]_{k}\|
$$
for any $ [p_{rs}]_{k}\in M_k(\CC[z,w])$. The proof is complete.
\end{proof}

We recall \cite{SzFBK-book} that a contraction $T\in B(\cH)$ is called of class $\cC_0$ if $T$ is c.n.u. and there exists an inner function $u\in H^\infty$ such that $u(T)=0$. It is well-known that there is an essentially unique  minimal inner function $m_T\in H^\infty$ such that $m_T(T)=0$. It is clear that $T\in \cV_{J_{m_T}}(\cH)$ and the associated model is
$$
B=P_{\cN_{J_{m_T}}} S|_{\cN_{J_{m_T}}}, \qquad \cN_{J_{m_T}}=H^2\ominus m_TH^2.
$$

\begin{remark}
\label{rem}
Applying Theorem \ref{A-M2} when $T_1\in \cV_{J_{m_{T_1}}}(\cH)$ or $T_2\in \cV_{J_{m_{T_2}}}(\cH)$ are commuting $\cC_0$-contractions with
$d_i:=\dim \cD_{T_i}<\infty$, we obtain an improvement of And\^ o's  inequality \cite{An}. This is due to the fact that
$$
   \|[p_{rs}(B_1\otimes I_{\CC^{d_1}},\varphi_{U_1^*}(B_1))]_{k}\|
   \leq \|[p_{rs}(S\otimes I_{\CC^{d_1}},\varphi_{U_1^*}(S))]_{k}\|
   \leq \|[p_{rs}(U_1 ,U_2)]_{k}\|,
$$
where $B_1$ is the universal model of the variety $\cV_{J_{m_{T_1}}}$, and  $U_1$  and $U_2$ are commuting unitaries extending the isometries $S\otimes I_{\CC^{d_1}}$ and $\varphi_{U_1^*}(S)$, respectively.

  Moreover, the inequality in Theorem \ref{A-M2}  remains true if we drop the condition $d_i=\dim \cD_{T_i}<\infty$,  with the modification  that   $\varphi_{U_i^*}(S)$ is a contractive analytic Toeplitz operator on $H^2(\DD)\otimes \CC^{d_1}$. In particular (case $\psi_1=\psi_2=0$), if $T_1$ and $T_2$ are  commuting contractions such that $T_1$ is pure and $U\in \cU_{(T_1,T_2)}$, then $\varphi_{U^*}(S)$ is a contractive analytic operator on $H^2(\DD)\otimes \CC^{d_1}$  and
  $$
  \|[p_{rs}(T_1,T_2)]_{k}\|\leq \|[p_{rs}(S\otimes I_{\CC^{d_1}},\varphi_{U_1^*}(S))]_{k}\|
   \leq \|[p_{rs}(U_1 ,U_2)]_{k}\|,
  $$
  where   $U_1$ and $U_2$ are  commuting unitary dilations for  $S\otimes I_{\CC^{d_1}}$ and $\varphi_{U_1^*}(S)$, respectively.
\end{remark}

Let $T$   be  a  contractive matrix and assume that it has  no eigenvalues of modulus 1. This is equivalent to saying that the contractive matrix is completely non-unitary (c.n.c). Let  $m_{T}(z)=(z-\lambda_1)^{n_1}\cdots (z-\lambda_k)^{n_k}$ be the minimal polynomial of $T$ and  let $J_{T}$ be  the $w^*$-closed two sided ideal of $H^\infty$ generated by $m_{T}$.  Then $J_{T}=b_{T} H^\infty$,
where $b_{T}$ is the Blaschke product defined by
$$
b_{T}(z):=\prod_{i=1}^k\left(\frac{z-\lambda_i}{1-\bar\lambda_i z}\right)^{n_i}.
$$
In this case, we have $\cN_{J_{T}}=\cN_T:=H^2\ominus b_{T}H^2$ and it is well known that   
$$\cN_{T}=\text{\rm span}\, \{\ker (S^*-\bar\lambda_i)^{n_i}: i=1,\ldots k\}\quad \text{ and } \quad \dim\cN_{T}=n_1+\cdots +n_k
$$
(see e.g. \cite{N}).
Moreover, $\ker (S^*-\bar\lambda_i)^{n_i}=\text{\rm span} \{v_j^i\}_{j=0}^{n_i-1}$, where
$v^i_j(z):=\sum_{m=j}^\infty \frac{m !}{(m-j)!} \bar\lambda_i^{m-j}z^m$.
We call the truncated shift $B:=P_{\cN_T} S|_{\cN_T}$ the universal model associated with the c.n.c. contractive matrix $T$.

\begin{theorem}\label{AM3} Let $T_1$ and $T_2$ be  commuting contractive matrices such that $T_1$ is completely non-unitary  and let  $d_1:=\dim (I-T_1T_1^*)^{1/2}\cH$.
For any unitary operator  $U\in \cU_{(T_1, T_2)}$, $\varphi_{U^*}(S)$ is an isometric analytic Toeplitz operator on $H^2(\DD)\otimes \CC^{d_1}$   and
$$
\|[p_{rs}(T_1,T_2)]_{k}\|\leq  \|[p_{rs}(B_1\otimes I_{\CC^{d_1}},\varphi_{U^*}(B_1))]_{k}\|, \qquad [p_{rs}]_{k}\in M_k(\CC[z,w]),
$$
where $B_1:=P_{\cN_{{T_1}}}S|_{\cN_{{T_1}}}$ is the model operator associated with $T_1$.
\end{theorem}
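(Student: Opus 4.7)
The plan is to derive Theorem \ref{AM3} as a specialization of the arguments in Theorem \ref{A-M2}. The key reduction is the observation that every c.n.u. contractive matrix belongs to the noncommutative variety $\cV_{J_\psi}(\cH)$ for a suitable inner function $\psi$, so the $B_1$-side inequality of Theorem \ref{A-M2} (which requires only $T_1\in \cV_{J_{\psi_1}}$) applies, and the isometry argument for $\varphi_{U^*}(S)$ goes through unchanged since it uses $T_2$ only through the finite-dimensionality of $\cD_{T_2}$.

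First, I would verify $T_1 \in \cV_{J_{b_{T_1}}}(\cH)$. The spectrum of a contractive matrix $T_1$ lies in $\overline{\DD}$; an eigenvalue $\lambda$ with $|\lambda|=1$ and eigenvector $v$ would give $\|T_1 v\|=\|v\|$, and contractivity of $T_1^*$ then forces $T_1^* v=\bar\lambda v$, making $\mathrm{span}\{v\}$ a unitary reducing summand and contradicting the c.n.u. hypothesis. Hence every eigenvalue of $T_1$ lies in $\DD$, the Blaschke product $b_{T_1}$ is well-defined, and since $m_{T_1}(T_1)=0$ one also has $b_{T_1}(T_1)=0$. A c.n.u. contractive matrix with spectrum in $\DD$ satisfies $T_1^{*n}\to 0$, so $T_1$ is pure and therefore lies in $\cV_{J_{b_{T_1}}}(\cH)$.

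Next, I would invoke Theorem \ref{ando} in the case $n_1=n_2=1$ with $J=J_{b_{T_1}}$. Since $T_1, T_2$ are matrices, both $d_1$ and $d_2$ are finite, the dimensional condition $d_1+d_2=d_1+d_2$ is trivially satisfied, and $\cU_{(T_1,T_2)}\neq \emptyset$. Theorem \ref{ando} then gives at once the polynomial inequality
$$
\|[p_{rs}(T_1,T_2)]_k\| \leq \|[p_{rs}(B_1\otimes I_{\CC^{d_1}},\varphi_{U^*}(B_1))]_k\|.
$$
To show $\varphi_{U^*}(S)$ is isometric on $H^2(\DD)\otimes\CC^{d_1}$, I would reproduce the argument in the proof of Theorem \ref{A-M2}: decompose the contraction $D$ on the finite-dimensional space $\cD_{T_2}$ as $D_1\oplus D_2$, where $D_1$ is unitary on $\cH_u$ and $D_2$ is completely non-unitary on $\cH_s$. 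The unitary identities $CC^*+DD^*=I$ and $AC^*+BD^*=0$ give $C^*|_{\cH_u}=0$ and then $B|_{D^*\cH_u}=0$, so $\cH_u \subset \ker B$ and $\mathrm{range}\, B^*\subset \cH_s$. Finite-dimensionality of $\cH_s$ forces the c.n.u. operator $D_2$ to have spectral radius strictly less than $1$, whence $\|S\otimes D_2\|<1$; condition \eqref{cond-iso} is then automatically satisfied, and $\varphi_{U^*}(S)$ is an isometry.

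The main step requiring care is the isometry verification: condition \eqref{cond-iso} is not automatic and relies crucially on the finite-dimensional matrix structure to force the spectral radius of the c.n.u. part of $D$ to be strictly less than $1$. Beyond this, the theorem is essentially a recognition statement, since the c.n.u. hypothesis on $T_2$ in Theorem \ref{A-M2} was used only to produce the $B_2$-side of the minimum there, while for the $B_1$-side inequality asserted here the only facts about $T_2$ needed are its commutation with $T_1$ and the finite-dimensionality of $\cD_{T_2}$, both of which hold by hypothesis.
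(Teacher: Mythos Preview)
Your proposal is correct and follows essentially the same route as the paper. The paper's proof is a two-line reduction: it observes (via the Jordan form) that a c.n.u.\ contractive matrix is pure, and then applies Theorem~\ref{A-M2} with $\psi_1=m_{T_1}$ and $\psi_2=0$. You unpack this a bit further by going directly to Theorem~\ref{ando} and reproducing the isometry argument from the proof of Theorem~\ref{A-M2}; this has the minor advantage of making explicit that the hypothesis $T_2\in\cV_{J_{\psi_2}}$ in Theorem~\ref{A-M2} is not actually needed for the $B_1$-side conclusion, since Theorem~\ref{ando} only requires $T_1\in\cV_J$ and the isometry argument uses $T_2$ only through $\dim\cD_{T_2}<\infty$.
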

\begin{proof} Since $T_1$ is a  completely non-unitary  contraction on a finite-dimensional Hilbert space,  we can use the Jordan canonical form  to show that $T_1$  is   a pure contraction, i.e. $(T_1^*)^n\to 0$ stronly, as $n\to \infty$. Applying Theorem \ref{A-M2} to  the commuting contractive matrices $T_1$ and $T_2$, when $\psi_1=m_{{T}_1}$, the minimal polynomial of ${T}_1$, and $\psi_2=0$, the result follows.
\end{proof}

\begin{corollary} \label{2-pure}
Let $T_1$ and $T_2$ be  commuting contractive matrices with spectrum in the open unit disk $\DD$ with $d_i:=\dim \cD_{T_i}$. Then, for any polynomial $p$ in two variables,
$$
\|p(T_1, T_2)\|\leq \inf_{U, W}\left\{ \|p(B_1\otimes I_{\CC^{d_1}}, \varphi_{U^*}(B_1))\|, \|p(\varphi_{W^*}(B_2), B_2\otimes I_{\CC^{d_2}})\|\right\},
$$
where $B_i$ is the universal model for $T_i$ and the infimum is taken over all $U\in \cU_{(T_1,T_2)}$ and $W\in \cU_{(T_2, T_1)}$.
\end{corollary}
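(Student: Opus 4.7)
The corollary is essentially a direct specialization of Theorem \ref{A-M2} combined with a standard infimum argument, so the plan is light on machinery.

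First I would verify that each $T_i$ falls into the hypotheses of Theorem \ref{A-M2}. Since $T_1, T_2$ are matrices with spectrum in the open disk $\DD$, they have no eigenvalues on $\TT$, hence are completely non-unitary. Moreover, by the Cayley--Hamilton theorem each $T_i$ is annihilated by its minimal polynomial $m_{T_i}$, whose roots lie in $\DD$. Let $\psi_i$ be the finite Blaschke product with the same zeros as $m_{T_i}$, counted with multiplicity; then $J_{\psi_i}=\psi_i H^\infty$ and $m_{T_i}\in J_{\psi_i}$, so $T_i\in \cV_{J_{\psi_i}}(\cH)$ in the sense defined just before Theorem \ref{A-M2}. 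Since $T_i$ acts on a finite-dimensional Hilbert space, the defect space $\cD_{T_i}$ is automatically finite dimensional and $d_i=\dim\cD_{T_i}<\infty$, so the standing finiteness hypothesis of Theorem \ref{A-M2} is met. In particular, $B_i=P_{\cN_{J_{\psi_i}}}S|_{\cN_{J_{\psi_i}}}$ is precisely the universal model attached to $T_i$ in the statement of the corollary.

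Next I would apply Theorem \ref{A-M2} with $k=1$ directly to the commuting pair $(T_1,T_2)$. For any chosen $U\in \cU_{(T_1,T_2)}$ and any $W\in \cU_{(T_2,T_1)}$, the theorem yields simultaneously the two bounds
\begin{equation*}
\|p(T_1,T_2)\|\leq \|p(B_1\otimes I_{\CC^{d_1}},\varphi_{U^*}(B_1))\|
\quad\text{and}\quad
\|p(T_1,T_2)\|\leq \|p(\varphi_{W^*}(B_2),B_2\otimes I_{\CC^{d_2}})\|,
\end{equation*}
where $\varphi_{U^*}(B_1)$ and $\varphi_{W^*}(B_2)$ are the multi-analytic operators produced by the construction preceding Lemma \ref{strong-limit} (in the scalar $n_1=n_2=1$ case, Toeplitz operators on $H^2(\DD)\otimes \CC^{d_i}$). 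The second inequality follows from the theorem by reversing the roles of the two variables: the pair $(T_2,T_1)$ is again a commuting pair of c.n.u.\ matrices with finite-dimensional defects, and applying Theorem \ref{A-M2} to this swapped pair with the polynomial $\tilde p(z,w):=p(w,z)$ gives the displayed estimate after undoing the variable swap.

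Finally, I would take the infimum over all admissible $U\in \cU_{(T_1,T_2)}$ and $W\in \cU_{(T_2,T_1)}$ on the right-hand sides. Since the left-hand side $\|p(T_1,T_2)\|$ does not depend on these choices and the two inequalities above hold pointwise in $U$ and $W$ respectively, the conclusion
\begin{equation*}
\|p(T_1,T_2)\|\leq \inf_{U,W}\bigl\{\,\|p(B_1\otimes I_{\CC^{d_1}},\varphi_{U^*}(B_1))\|,\ \|p(\varphi_{W^*}(B_2),B_2\otimes I_{\CC^{d_2}})\|\,\bigr\}
\end{equation*}
is immediate. There is essentially no obstacle: the only thing to check carefully is that spectrum-in-$\DD$ for a matrix really does place it into a variety $\cV_{J_{\psi}}$ (handled by the minimal polynomial/Blaschke product identification above) and that the symmetric bound corresponds to applying the theorem to the swapped pair. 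Both points are routine, so the corollary is genuinely a direct reading of Theorem \ref{A-M2}.
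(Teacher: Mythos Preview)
Your proof is correct and follows essentially the same route as the paper: the paper invokes Theorem \ref{AM3} for the pairs $(T_1,T_2)$ and $(T_2,T_1)$ and takes the infimum, whereas you appeal directly to Theorem \ref{A-M2} (from which Theorem \ref{AM3} is derived) and then swap variables for the second bound. The verification that matrices with spectrum in $\DD$ lie in the appropriate varieties $\cV_{J_{\psi_i}}$ via their minimal Blaschke products is exactly the identification made in the paragraph preceding Theorem \ref{AM3}, so your argument and the paper's are the same up to which labelled result is cited.
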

\begin{proof} Applying Theorem \ref{AM3} to  the pairs  $(T_1, T_2)$ and $(T_2, T_1)$ of commuting contractive matrices, the result follows.
\end{proof}

A few remarks are necessary. If $T_1$ and $T_2$  are commuting contractive matrices with no eigenvalues of modulus 1,  Agler-McCarthy  proved, in their remarkable paper  \cite{AM}, that the pair $(T_1, T_2)$ has a  co-isometric  extension $(M_z^*, M_\Phi^*)$   on $H^2\otimes \CC^d$  and, for any polynomial $p$ in two variables,
\begin{equation}\label{A-M}
\|p(T_1, T_2)\|\leq \|p(M_z\otimes I_{\CC^{d}}, M_\Phi)\|\leq \|p\|_V,
\end{equation}
where $V$ is a distinguished variety in $\DD^2$ depending on $T_1$ and $T_2$,  defined by
$$V:=\{(z,w)\in \DD^2: \ \det(\Phi(z)-wI_{\CC^d})\}.
$$
We remark that
 the inequalities provided in Theorem \ref{AM3} and Corollary \ref{2-pure} are   sharper than And\^ o's inequality and Agler-McCarthy's  inequality,    due to the fact that
 $$
\|P_{\cN_{T_1}\otimes \CC^d}p(M_z\otimes I_{\CC^{d}}, M_\Phi)|_{\cN_{T_1}\otimes \CC^d}\|\leq \|p(M_z\otimes I_{\CC^{d}}, M_\Phi)\|\leq \|p(U_1,U_2)\|,
$$
where $U_1$ and $U_2$ are commuting unitaries extending the isometries
$M_z\otimes I_{\CC^{d}}$ and $M_\Phi$, respectively.
We shall consider  some examples.

\begin{example} Consider the polynomial
$$
p(z, w)= m_{T_1} w^m+m_{T_1}^2  w^k, \qquad m\leq k.
$$
 Since  $B:=P_{\cN_{{T_1}}}S|_{\cN_{{T_1}}}$ and   $m_{T_1}(B)=0$,  Theorem \ref{AM3} implies
$\|p(B\otimes I_{\CC^{d}},\varphi_{U^*}(B)))\|=0$.
On the other hand, $\|p(M_z\otimes I_{\CC^{d}}, M_\Phi)\|>0$. Indeed, if we assume the contrary, then
$$(M_{b_{T_1}}\otimes I)M_\Phi^m[I+ (M_{b_{T_1}}\otimes I)M_\Phi^{k-m}]=0
$$
Since $M_{b_{T_1}}$ and $M_\Phi$ are isometries, we must have
$(M_{b_{T_1}}^*\otimes I){M_\Phi^*}^{k-m}=-I$. Since $M_{b_{T_1}}^*$ is not injective, we obtain a contradiction.
\end{example}

\begin{example}
Let $T_1$ be a nilpotent contractive matrix such that $T_1^k=0$ and $T_1^{k-1}\neq 0$, where $k\geq 2$. Given $a, b>0$ and $\ell\in \NN$, consider the polynomial  in two variables
$$
p(z,w)=az+bz^k w^\ell.
$$
If $\varphi$ is any inner analytic  function on $H^2\otimes \CC^d$, then  $(M_z^{k-1}\otimes I)\varphi(M_z)^{\ell-1}$ is a pure isometry and, consequently,
$$\|p(M_z\otimes I_{\CC^{d}}, M_\Phi)\|=\|aI+b(M_z^{k-1}\otimes I)\varphi(M_z)^{\ell-1}||=\sup_{z\in \DD} |a+bz|=a+b.
$$
Agler-McCarthy inequality \eqref{A-M} becomes
$$
\|p(T_1, T_2)\|\leq a+b\leq \|p\|_V.
$$
On the other hand, since $T_1$ is in  the variety
$\cV_k:=\{X\in B(\cH): X^k=0\}$  and the model operator  for $\cV_k$  is  $B:=P_{\cP_{k-1}} S|_{\cP_{k-1}}$, where $S$ is the unilateral shift and $\cP_{k-1}:=\text{\rm span }\{1, z,\ldots, z^{k-1}\}\subset H^2$, Theorem \ref{AM3} implies
$$
\|p(T_1, T_2)\|\leq \|p(B\otimes I_{\CC^d}, \varphi(B))\|=\|aB\|=a.
$$
\end{example}

\begin{example} Let $\cC^k$, $k\geq 0$, be the set of all nilpotent contractive matrices of order $k$, and consider the polynomial
$$
p(z,w)=w+zw+z^2w+\cdots+z^nw.
$$
If $2\leq k\leq n$ and $\varphi(M_z)$ is any inner analytic operator on $H^2\otimes \CC^d$, then
$$
\|p(M_z\otimes I_{\CC^{d}}, \varphi(M_z))\|=\|I+M_z+\cdots + M_z^n\|=\sup_{z\in \DD}
|1+z+\cdots +z^n|=n+1.
$$
Therefore, if $T_1\in \cC^k$, then Agler-McCarthy inequality becomes
$$
\|p(T_1, T_2)\|\leq n+1\leq \|p\|_V.
$$
On the other hand, we have
\begin{equation*}
\begin{split}
\|p(B\otimes I_{\CC^d}, \varphi(B))\|&=\|(I+B+\cdots+ B^{k-1})\varphi(B)\|\\
&\leq \|I+B+\cdots+ B^{k-1}\|\leq k,
\end{split}
\end{equation*}
where $B:=P_{\cP_{k-1}} S|_{\cP_{k-1}}$ is the model for the varierty $\cC^k$.
 Theorem \ref{AM3} implies
$$
\|p(T_1, T_2)\|\leq \|p(B\otimes I_{\CC^d}, \varphi(B))\|\leq k.
$$
\end{example}

\begin{proposition} \label{unitary-pure}
Let $T_1$ and $T_2$ be  commuting contractive matrices  such that $\sigma(T_1)\subset \TT$ and $T_2$ is completely non-unitary. Then, for any polynomial $p$ in two variables,
$$
\|p(T_1, T_2)\|\leq  \max_{\lambda\in \sigma(T_1)}\|p(\lambda, B_{2,\lambda} )\|\leq   \max_{\lambda\in \sigma(T_1)}\|p(\lambda, B_2)\|
$$
where   $B_{2,\lambda}$ is the universal model for $T_2|_{\cE_\lambda}$ and
$\cE_\lambda$ is the eigenspace corresponding to the unimodular eigenvalue $\lambda\in \sigma(T_1)$.
\end{proposition}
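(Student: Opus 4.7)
My plan is to exploit the special structure that $\sigma(T_1)\subset \TT$ forces on the contractive matrix $T_1$. For any unit eigenvector $v$ with unimodular eigenvalue $\lambda$, the classical contraction identity
\begin{equation*}
\|T_1^* v - \bar\lambda v\|^2 = \|T_1^* v\|^2 - 1 \leq 0
\end{equation*}
forces $T_1^* v = \bar\lambda v$, so each eigenspace $\cE_\lambda := \ker(T_1 - \lambda I)$ reduces $T_1$. A short Jordan-block argument (a nonzero generalized eigenvector of minimal order at a unimodular eigenvalue would lie in the range of $T_1 - \lambda I$ and be orthogonal to that range by the relation above) rules out non-trivial Jordan blocks, and hence $T_1$ is actually unitary with the orthogonal spectral decomposition $T_1 = \bigoplus_{\lambda \in \sigma(T_1)} \lambda I_{\cE_\lambda}$.

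Because $T_2$ commutes with $T_1$ and $T_1^{-1} = T_1^*$, the adjoint $T_2^*$ also commutes with $T_1$, so each $\cE_\lambda$ reduces $T_2$ as well. Setting $T_{2,\lambda} := T_2|_{\cE_\lambda}$, I obtain $T_2 = \bigoplus_\lambda T_{2,\lambda}$, whence
\begin{equation*}
p(T_1,T_2) = \bigoplus_{\lambda \in \sigma(T_1)} p(\lambda, T_{2,\lambda}) \qquad \text{and} \qquad \|p(T_1,T_2)\| = \max_{\lambda \in \sigma(T_1)} \|p(\lambda, T_{2,\lambda})\|.
\end{equation*}
Any unitary reducing subspace for $T_{2,\lambda}$ would also reduce $T_2$ unitarily, contradicting the c.n.u. hypothesis on $T_2$; hence each $T_{2,\lambda}$ is a c.n.u. contractive matrix, and in particular a pure contraction on a finite-dimensional space.

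For the first inequality I would apply the constrained Poisson kernel machinery of Section 1 to $T_{2,\lambda}$, viewed as an element of the variety $\cV_{J_{b_{T_{2,\lambda}}}}(\cE_\lambda)$: the intertwining relation for $K := K_{J_{b_{T_{2,\lambda}}},T_{2,\lambda}}$, together with the fact that $K$ is an isometry, yields
\begin{equation*}
q(T_{2,\lambda}) = K^* \bigl(q(B_{2,\lambda}) \otimes I_{\cD_{T_{2,\lambda}}}\bigr) K
\end{equation*}
for every one-variable polynomial $q$, and specializing to $q(z) := p(\lambda, z)$ gives $\|p(\lambda, T_{2,\lambda})\| \leq \|p(\lambda, B_{2,\lambda})\|$. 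For the second inequality I note that $m_{T_{2,\lambda}}$ divides $m_{T_2}$, so the associated Blaschke products satisfy $b_{T_{2,\lambda}} \mid b_{T_2}$; consequently $\cN_{T_{2,\lambda}} \subset \cN_{T_2}$ is a co-invariant subspace for $B_2$ and $B_{2,\lambda}$ is the corresponding compression of $B_2$, from which the comparison $\|p(\lambda, B_{2,\lambda})\| \leq \|p(\lambda, B_2)\|$ follows at once. The only step demanding any care is the passage from $\sigma(T_1)\subset \TT$ to the unitarity of $T_1$ via the Jordan argument; everything else is a direct application of the Section 1 machinery.
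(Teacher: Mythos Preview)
Your proof is correct and follows essentially the same route as the paper: reduce to $T_1$ unitary, decompose along its eigenspaces, observe that each $T_{2,\lambda}$ is c.n.u., and compare norms via the model operators. The only cosmetic differences are that you spell out the Jordan-block argument for unitarity of $T_1$ (the paper just cites the invariant-vector fact and says ``one can easily see''), and you invoke the constrained Poisson kernel for $T_{2,\lambda}$ directly rather than citing Theorem~\ref{AM3}; since $p(\lambda,\cdot)$ is a one-variable polynomial, the two are equivalent.
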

\begin{proof}
Assume that  $T_1$ and $ T_2$ are acting on a finite dimensional Hilbert space $\cH$. Since $\sigma(T_1)\subset \TT$, one can easily see that $T_1$ is a unitary operator, Indeed, this is due to the well-known fact that a contraction $T_1$ and its adjoint $T_1^*$ have the same invariant vectors, i.e. $T_1h=h$ if and only if $T_1^*h=h$ (see \cite{SzFBK-book}). For each $\lambda\in \sigma(T_1)$, let $\cE_\lambda$ be the corresponding eigenspace. Since $T_2$ commutes  $T_1$, which is   unitary operator, it commutes with each spectral projection $P_{\cE_\lambda}$. Therefore, each eigenspace $\cE_\lambda$ is reducing for $T_2$ as well. With respect to the decomposition $\cH=\bigoplus_{\lambda\in \sigma(T_1)} \cE_\lambda$ we have
$T_1=\bigoplus_{\lambda\in \sigma(T_1)} \lambda P_{\cE_\lambda}$  and
$T_2=\bigoplus_{\lambda\in \sigma(T_1)} T_2|_{\cE_\lambda}$.
For any polynomial $p$ in two variables,
we have
$$p(T_1,T_2)=\bigoplus_{\lambda\in \sigma(T_1)} p(\lambda I_{\cE_\lambda}, T_2|_{\cE_\lambda})\quad \text{and} \quad
\|p(T_1,T_2)\|=\max_{\lambda\in \sigma(T_1)}\| p(\lambda I_{\cE_\lambda}, T_2|_{\cE_\lambda})\|.
$$
Since $T_2$ is c.n.u., so is each operator $T_2|_{\cE_\lambda}$. If $B_{2,\lambda}$ is the universal model for $T_2|_{\cE_\lambda}$, then, applying Theorem \ref{AM3} to the pair $(T_2|_{\cE_\lambda})$, we deduce that $\| p(\lambda I_{\cE_\lambda}, T_2|_{\cE_\lambda})\|\leq \|p(\lambda, B_{2,\lambda} )\|$, which proves the first inequality of the proposition.
To prove the second one, note that the minimal polynomial of $T_2|_{\cE_\lambda}$ divides the minimal polynomial of $T_2$ and, consequently, $B_{2,\lambda}=P_{\cN} B_2 |_{\cN}$, where $\cN:=\cN_{m_{T_2|_{\cE_\lambda}}}$. This completes the proof.
\end{proof}

\begin{proposition} \label{2-unitary}
Let $T_1$ and $T_2$ be  commuting contractive matrices with spectrum in the unit circle $\TT$. Then
$$
\|p(T_1, T_2)\| =\max_{(\lambda, \mu)\in \Omega_{(T_1,T_2)}} |p(\lambda, \mu)|,
$$
where  $$\Omega_{(T_1,T_2)}:=\left\{ (\lambda, \mu)\in \sigma(T_1)\times \sigma(T_2): \ \cE_\lambda^1\cap \cE_\mu^2\neq \{0\}\right\},
$$ and
$\cE_\lambda^1$ and $ \cE_\mu^2$ are the eigenspaces corresponding to the eigenvalues $\lambda\in \sigma(T_1)$ and $\mu\in \sigma(T_2)$, respectively.
\end{proposition}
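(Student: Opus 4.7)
The plan is to reduce immediately to the case of two commuting unitary matrices and then diagonalize them simultaneously. I would first argue, exactly as in the proof of Proposition \ref{unitary-pure}, that since $T_1$ is a contractive matrix with $\sigma(T_1)\subset \TT$, it must in fact be unitary: indeed, the eigenvectors of a contraction associated with unimodular eigenvalues are also eigenvectors of its adjoint, and on a finite-dimensional space $\sigma(T_1)\subset\TT$ forces $T_1$ to be diagonalizable with all eigenvalues of modulus one, hence unitary. The same argument gives that $T_2$ is unitary.

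Next, since $T_1$ and $T_2$ are commuting normal (in fact unitary) operators on the finite-dimensional space $\cH$, they admit a common orthonormal eigenbasis, which yields the orthogonal decomposition
\begin{equation*}
\cH=\bigoplus_{(\lambda,\mu)\in\sigma(T_1)\times\sigma(T_2)} \cF_{\lambda,\mu},\qquad \cF_{\lambda,\mu}:=\cE_\lambda^1\cap\cE_\mu^2,
\end{equation*}
where each summand is jointly reducing for $(T_1,T_2)$ and $T_1|_{\cF_{\lambda,\mu}}=\lambda I_{\cF_{\lambda,\mu}}$, $T_2|_{\cF_{\lambda,\mu}}=\mu I_{\cF_{\lambda,\mu}}$. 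Observe that $\cF_{\lambda,\mu}\neq\{0\}$ precisely when $(\lambda,\mu)\in\Omega_{(T_1,T_2)}$, so the nonzero summands in this decomposition are indexed exactly by $\Omega_{(T_1,T_2)}$.

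For any polynomial $p\in\CC[z,w]$, evaluating $p(T_1,T_2)$ on each common eigenspace gives $p(T_1,T_2)|_{\cF_{\lambda,\mu}}=p(\lambda,\mu)I_{\cF_{\lambda,\mu}}$, so
\begin{equation*}
p(T_1,T_2)=\bigoplus_{(\lambda,\mu)\in\Omega_{(T_1,T_2)}} p(\lambda,\mu)I_{\cF_{\lambda,\mu}},
\end{equation*}
and the norm of a direct sum of scalar operators is the supremum of the scalars, giving the desired identity
\begin{equation*}
\|p(T_1,T_2)\|=\max_{(\lambda,\mu)\in\Omega_{(T_1,T_2)}}|p(\lambda,\mu)|.
\end{equation*}

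There is no real obstacle here beyond the two initial observations: that $\sigma(T_i)\subset\TT$ combined with contractivity forces unitarity in finite dimensions, and that commuting unitary matrices are simultaneously diagonalizable. The mildly delicate point is simply to recognize that the set $\Omega_{(T_1,T_2)}$, as defined in terms of a nontrivial intersection of individual eigenspaces, coincides with the set of common eigenvalues of the pair $(T_1,T_2)$, which is immediate from the definition $\cF_{\lambda,\mu}=\cE_\lambda^1\cap\cE_\mu^2$.
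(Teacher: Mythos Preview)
Your proof is correct and follows essentially the same approach as the paper: both first argue (via the same observation on unimodular eigenvalues of contractions) that $T_1$ and $T_2$ are commuting unitary matrices, then invoke the spectral theorem/simultaneous diagonalization to write $p(T_1,T_2)$ as a direct sum of scalar multiples of the identity on the joint eigenspaces $\cE_\lambda^1\cap\cE_\mu^2$, from which the norm identity is immediate. Your version is simply a bit more explicit about the decomposition than the paper's terse appeal to the spectral theorem.
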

\begin{proof}
As in the proof of Proposition \ref{unitary-pure}, one can see that $T_1$ and $T_2$ are commuting unitary matrices. Using the spectral theorem, we obtain
$$
p(T_1, T_2)=\sum_{(\lambda, \mu)\in \Omega_{(T_1,T_2)}} p(\lambda, \mu)P_{\cE_\lambda^1\cap \cE_\mu^2}.
$$
Since $\{P_{\cE_\lambda^1\cap \cE_\mu^2}\}_{(\lambda, \mu)\in \Omega_{(T_1,T_2)}}$ is a set of orthogonal projections, the result follows.
\end{proof}

We denote by $\cC_u$  (resp. $\cC_{cnc}$) the class of all  unitary (resp. completely non-unitary) operators on a Hilbert space.
\begin{lemma} \label{structure}
Let $T_1$ and $T_2$ be commuting contractive operators on a finite dimensional Hilbert space $\cH$. Then there is an orthogonal  decomposition
$\cH=\oplus_{i=1}^4 \cH_i$ such that each subspace $\cH_i$ is reducing for both $T_1$ and $T_2$ and, setting
$$T_j^{(i)}:=T_j|_{\cH_i}:\cH_i\to \cH_i, \qquad j=1,2,
$$
we have $T_j=\oplus_{i=1}^4 T_j^{(i)}$  with the properties that \ $T_1^{(i)} T_2^{(i)}=T_2^{(i)} T_1^{(i)}$ for $i\in \{1,\ldots, 4\}$,
  $$
  T_1^{(1)}, T_1^{(2)}\in \cC_u; \quad   T_1^{(3)}, T_1^{(4)}\in \cC_{cnu}
  $$
  and
  $$ T_2^{(1)},  T_2^{(3)}\in \cC_{u}; \quad T_2^{(2)} , T_2^{(4)}\in \cC_{cnu}.
  $$
 \end{lemma}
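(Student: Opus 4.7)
The plan is to obtain the four-piece decomposition by iterating the Sz.-Nagy--Foia\c s canonical orthogonal splitting (unitary $\oplus$ completely non-unitary) twice: first for $T_1$, then for $T_2$ restricted to each of the two resulting reducing subspaces. Three finite-dimensional facts drive the argument: (i) for a contraction on a finite-dimensional space, the canonical decomposition is an orthogonal reducing decomposition with the unitary part being the sum of eigenspaces at unimodular eigenvalues; (ii) operators commuting with such a contraction leave both pieces invariant and in fact reducing; (iii) restricting a c.n.u.\ contraction to a reducing subspace yields a c.n.u.\ contraction.

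For step one, I would set $\cH_u^{(1)}:=\bigoplus_{|\lambda|=1}\ker(T_1-\lambda I)$ and $\cH_c^{(1)}:=(\cH_u^{(1)})^\perp$. The standard Jordan-block argument (if $(T_1-\lambda)v\ne 0$ but $(T_1-\lambda)^2 v=0$ with $|\lambda|=1$, then $\|T_1^n v\|\to\infty$, contradicting $\|T_1\|\le1$) shows that each unimodular generalized eigenspace coincides with the eigenspace. Combined with the classical identity $T_1 v=\lambda v \Rightarrow T_1^* v=\bar\lambda v$ for $|\lambda|=1$ (proved by expanding $\|(T_1^*-\bar\lambda)v\|^2$ and using $\|T_1^*\|\le 1$), this yields: (a) the eigenspaces at distinct unimodular eigenvalues are mutually orthogonal and orthogonal to $\cH_c^{(1)}$, (b) $\cH_u^{(1)}$ is reducing for $T_1$ with $T_1|_{\cH_u^{(1)}}$ unitary, and (c) $T_1|_{\cH_c^{(1)}}$ has no unimodular eigenvalues, hence spectrum in $\DD$, and therefore is c.n.u.

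For step two, since $T_1 T_2=T_2 T_1$ we have $T_2\ker(T_1-\lambda)\subset \ker(T_1-\lambda)$; taking adjoints gives $T_1^* T_2^*=T_2^* T_1^*$, and using $\ker(T_1-\lambda)=\ker(T_1^*-\bar\lambda)$ at $|\lambda|=1$ we also get $T_2^*\ker(T_1-\lambda)\subset \ker(T_1-\lambda)$. Summing, both $\cH_u^{(1)}$ and $\cH_c^{(1)}$ are reducing for $T_2$. Now I apply the same canonical splitting to $T_2|_{\cH_u^{(1)}}$ and to $T_2|_{\cH_c^{(1)}}$, getting orthogonal decompositions $\cH_u^{(1)}=\cH_1\oplus\cH_2$ and $\cH_c^{(1)}=\cH_3\oplus\cH_4$ with $T_2|_{\cH_1},T_2|_{\cH_3}\in\cC_u$ and $T_2|_{\cH_2},T_2|_{\cH_4}\in\cC_{cnu}$.

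Finally, applying step two with the roles of $T_1$ and $T_2$ swapped (and working inside $\cH_u^{(1)}$ or $\cH_c^{(1)}$) shows that each $\cH_i$ is reducing for $T_1$. On $\cH_1,\cH_2$, the operator $T_1$ is a restriction of the unitary $T_1|_{\cH_u^{(1)}}$ to a reducing subspace, hence unitary; on $\cH_3,\cH_4$, $T_1$ is a restriction of the c.n.u.\ contraction $T_1|_{\cH_c^{(1)}}$ to a reducing subspace, and any unitary summand of such a restriction would also be a unitary summand of $T_1|_{\cH_c^{(1)}}$, contradicting its c.n.u.\ character. The main subtlety in the whole argument is not a hard estimate but a bookkeeping one, namely guaranteeing at each stage an \emph{orthogonal reducing} decomposition rather than merely an algebraic Riesz split; this is handled by using the Sz.-Nagy--Foia\c s canonical form together with the self-adjointness of the unimodular eigenspaces noted above.
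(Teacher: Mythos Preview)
Your proof is correct and follows essentially the same route as the paper's: iterate the Sz.-Nagy--Foia\c s canonical splitting, first for $T_1$ to get $\cH_u\oplus\cH_{cnu}$, then for $T_2$ restricted to each piece, and use at each stage that the unitary summand is the span of the unimodular eigenspaces (so that commuting with the other operator forces the pieces to be reducing). The only difference is presentational: you spell out the underlying finite-dimensional facts (no nontrivial unimodular Jordan blocks, $T v=\lambda v\Rightarrow T^*v=\bar\lambda v$ for $|\lambda|=1$), while the paper simply invokes the canonical decomposition and the fact that a contraction and its adjoint share invariant vectors, citing \cite{SzFBK-book}.
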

 \begin{proof}  First, we decompose $T_1$ as a direct sum of its unitary part $A_1:=T_1|_{\cH_u}$  and the c.n.u. part $B_1:=T_1|_{\cH_{cnu}}$. Therefore,  $\cH=\cH_{u}\oplus \cH_{cnu}$,
 $$
 T_1=\left[\begin{matrix} A_1&0\\0&B_1\end{matrix}\right],
 $$
 and  the decomposition is unique \cite{SzFBK-book}.
  Since the spectrum of $A_1$  consists of eigenvalues of modulus one  and $T_1$ commutes with $T_2$, we can deduce that $\cH_u$ is a reducing subspace for $T_2$. Consequently, we have
 $$
 T_2=\left[\begin{matrix} A_2&0\\0&B_2\end{matrix}\right],
 $$
 where $A_2:=T_2|_{\cH_u}$ and  $B_2:=T_2|_{\cH_{cnu}}$. Note that
 $A_1A_2=A_2A_1$ and $B_1B_2=B_2B_1$. The next step is to decompose the contractive matrix  $A_2$ as a direct sum of its unitary part $T_2^{(1)}:=A_2|_{\cH_1}\in \cC_u$ and c.n.u. part $T_2^{(2)}:=A_2|_{\cH_{cnu}}\in \cC_{cnu}$. Therefore, $\cH_u=\cH_1\oplus \cH_2$ and
 $$
 A_2=\left[\begin{matrix} T_2^{(1)}&0\\0&T_2^{(2)}\end{matrix}\right].
 $$
 Since $A_1A_2=A_2A_1$ and the spectrum of $A_2$  consists of eigenvalues of modulus one, as above, we  deduce that $\cH_1$  and $\cH_2$ are reducing subspaces for $A_1$ and, therefore,
 $$
 A_1=\left[\begin{matrix} T_1^{(1)}&0\\0&T_1^{(2)}\end{matrix}\right],
 $$
 where $T_1^{(1)}:=A_1|_{\cH_1}\in \cC_u$ and  $T_1^{(2)}:=A_1|_{\cH_2}\in \cC_u$.

 Since $B_2:=T_2|_{\cH_{cnu}}$ is a contraction on ${\cH_{cnu}}$, we decompose it as well  as a direct sum of its unitary part $T_2^{(3)}:=B_2|_{\cH_3}\in \cC_u$ and c.n.u. part $T_2^{(4)}:=B_2|_{\cH_4}\in \cC_{cnu}$.
  Therefore, $\cH_{cnu}=\cH_3\oplus \cH_4$ and
 $$
 B_2=\left[\begin{matrix} T_2^{(3)}&0\\0&T_2^{(4)}\end{matrix}\right].
 $$
 Once again,  since  the spectrum of $T_2^{(3)}$  consists of eigenvalues of modulus one and  $B_1B_2=B_2B_1$, we deduce that
 $\cH_3$  and $\cH_4$ are reducing subspaces for $B_1$ and, therefore,
 $$
 B_1=\left[\begin{matrix} T_1^{(3)}&0\\0&T_1^{(4)}\end{matrix}\right],
 $$
 where $T^{(3)}_1:=B_1|_{\cH_3}\in \cC_{cnu}$ and  $T^{(4)}_1:=B_1|_{\cH_4}\in \cC_{cnu}$. Putting together the decompositions above, we obtain that
 $T_j=\oplus_{i=1}^4 T_j^{(i)}$, $j=1,2$, where  each $T_j^{(i)}$ has the required property. The proof is complete.
 \end{proof}

Using   Lemma \ref{structure}, Proposition \ref{2-unitary}, Proposition \ref{unitary-pure}, and Corollary \ref{2-pure},   we obtain the following inequality for commuting contractive matrices.

\begin{theorem} \label{general} Let $T_1$ and $T_2$ be   commuting contractive matrices and let $T_j=\oplus_{i=1}^4 T_j^{(i)}$, $j=1,2$, be the decomposition from  Lemma \ref{structure}. Then,
 for any polynomial $p$ in two variables,
$$
\|p(T_1, T_2)\|\leq  \max_{i\in \{1,2,3,4\}}\|p(T_1^{(i)}, T_2^{(i)})\|,
$$
where
\begin{equation*}
\begin{split}
\|p(T_1^{(1)}, T_2^{(1)})\|&\leq \max_{(\lambda, \mu)\in \Omega_{(T_1^{(1)},T_2^{(1)})}} |p(\lambda, \mu)|,\\
\|p(T_1^{(2)}, T_2^{(2)})\|&\leq \max_{\lambda\in \sigma(T_1^{(2)})}\|p(\lambda, B_{T_2^{(2)},\lambda} )\|,\\
\|p(T_1^{(3)}, T_2^{(3)})\|&\leq \max_{\mu\in \sigma(T_2^{(3)})}\|p(  B_{T_1^{(3)},\mu}, \mu )\|,\\
\|p(T_1^{(4)}, T_2^{(4)})\|&\leq
 \inf_{U, W}\left\{ \|p(B_{T_1^{(4)}}, \varphi_{U^*}(B_{T_1^{(4)}}))\|, \|p(\varphi_{W^*}(B_{T_2^{(4)}}), B_{T_2^{(4)}})\|\right\},
\end{split}
\end{equation*}
where the set $\Omega_{(T_1^{(1)},T_2^{(1)})}$ is defined as in Proposition \ref{2-unitary},  $B_{T_j^{(i)}}$ is the universal model for $T_j^{(i)}$, the operators  $ B_{T_2^{(2)},\lambda}$ and $B_{T_1^{(3)},\mu}$ are defined  as in Proposition \ref{unitary-pure},  and the infimum is taken over all unitary operators $U\in \cU_{(T_1^{(4)},T_2^{(4)})}$ and $W\in \cU_{(T_2^{(4)}, T_1^{(4)})}$.
\end{theorem}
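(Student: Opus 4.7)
My plan is to reduce the inequality to an orthogonal-direct-sum computation and then invoke, piece by piece, the three previously established inequalities for the four possible ``types'' of commuting pairs.

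First I would invoke Lemma \ref{structure} to write $T_j=\bigoplus_{i=1}^{4}T_j^{(i)}$, $j=1,2$, on $\cH=\bigoplus_{i=1}^{4}\cH_i$, with each $\cH_i$ reducing both operators. Because the subspaces $\cH_i$ reduce both $T_1$ and $T_2$ simultaneously, every monomial $T_1^{a}T_2^{b}$ preserves each $\cH_i$ and restricts to $(T_1^{(i)})^{a}(T_2^{(i)})^{b}$; hence by linearity $p(T_1,T_2)=\bigoplus_{i=1}^{4}p(T_1^{(i)},T_2^{(i)})$, and the norm of an orthogonal direct sum is the maximum of the summand norms, giving the first inequality
$$
\|p(T_1,T_2)\|=\max_{i\in\{1,2,3,4\}}\|p(T_1^{(i)},T_2^{(i)})\|.
$$

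Next I would treat the four blocks separately. For $i=1$, the pair $(T_1^{(1)},T_2^{(1)})$ consists of commuting unitary matrices, so Proposition \ref{2-unitary} applies verbatim and yields the bound by $\max_{(\lambda,\mu)\in\Omega_{(T_1^{(1)},T_2^{(1)})}}|p(\lambda,\mu)|$. For $i=2$, $T_1^{(2)}$ is unitary and $T_2^{(2)}$ is c.n.u.; this is exactly the hypothesis of Proposition \ref{unitary-pure}, applied to the pair $(T_1^{(2)},T_2^{(2)})$, yielding the stated bound with $B_{T_2^{(2)},\lambda}$. For $i=3$ the roles are reversed, so I apply Proposition \ref{unitary-pure} to the pair $(T_2^{(3)},T_1^{(3)})$ (with the indeterminates $z,w$ swapped in $p$) to obtain the bound with $B_{T_1^{(3)},\mu}$.

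Finally, for $i=4$ both $T_1^{(4)}$ and $T_2^{(4)}$ are c.n.u.\ contractive matrices; since c.n.u.\ contractions on a finite-dimensional space have no unimodular eigenvalues, the spectra of $T_1^{(4)}$ and $T_2^{(4)}$ lie in the open disk $\DD$, so Corollary \ref{2-pure} applies and gives the desired infimum over $U\in\cU_{(T_1^{(4)},T_2^{(4)})}$ and $W\in\cU_{(T_2^{(4)},T_1^{(4)})}$. Combining the four estimates with the direct-sum identity completes the argument. The only nontrivial step is the verification that the $i=4$ block falls under Corollary \ref{2-pure}, which reduces to the elementary observation just noted about spectra of c.n.u.\ contractive matrices; the remaining work is purely a bookkeeping assembly of the earlier propositions.
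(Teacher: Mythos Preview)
Your proposal is correct and follows essentially the same approach as the paper: the paper's proof is a one-line reference to Lemma \ref{structure}, Proposition \ref{2-unitary}, Proposition \ref{unitary-pure}, and Corollary \ref{2-pure}, and you have accurately supplied the bookkeeping that assembles these results. Your observation that c.n.u.\ contractive matrices have spectrum in $\DD$ (needed to invoke Corollary \ref{2-pure} for the $i=4$ block) is exactly the point that makes the assembly work.
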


A weaker version of Theorem \ref{general} is the following.

\begin{corollary} If $T_1$ and $T_2$ are  commuting contractive matrices, then there exist  $d_1, d_2\in \NN$ and   isometric analytic Toeplitz operators $\varphi_i(S)$ on $H^2(\DD)\otimes \CC^{d_i}$ such that, for any polynomial $p$ in two variables,
$$
\|p(T_1, T_2)\|\leq \max_{\lambda_i\in \sigma(T_i)\cap \TT}\left\{ |p(\lambda_1, \lambda_2)|, \|p(\lambda_1, B_2)\|,  \|p(  B_1, \lambda_2)\|, \min\{\|p(B_1, \varphi_1(B_1))\|,\|p(B_2, \varphi_2(B_2))\|\} \right\},
$$
where $B_i$ is the universal model for the completely non-unitary  part of $T_i$, under the convention that if $\sigma(T_1)\cap \TT=\emptyset$ or
$\sigma(T_2)\cap \TT=\emptyset$, then the corresponding terms in the right-hand side of  inequality are deleted.
\end{corollary}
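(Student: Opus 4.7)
The plan is to decompose $(T_1,T_2)$ into the four orthogonal summands given by Lemma \ref{structure} and bound each summand separately, then reassemble via the direct-sum identity $\|p(T_1,T_2)\|=\max_{1\le i\le 4}\|p(T_1^{(i)},T_2^{(i)})\|$. I would handle $i=1$ (both unitary) with Proposition \ref{2-unitary}, getting $\max|p(\lambda_1,\lambda_2)|$ over $(\lambda_1,\lambda_2)\in\Omega_{(T_1^{(1)},T_2^{(1)})}\subset(\sigma(T_1)\cap\TT)\times(\sigma(T_2)\cap\TT)$. For $i=2$ and $i=3$ (exactly one of the two components unitary), Proposition \ref{unitary-pure} yields bounds of the form $\max_{\lambda}\|p(\lambda,B_{T_2^{(2)},\lambda})\|$ and $\max_{\mu}\|p(B_{T_1^{(3)},\mu},\mu)\|$. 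For $i=4$, where both $T_j^{(4)}$ are commuting c.n.u.\ contractive matrices --- hence pure with spectrum in $\DD$ --- I would invoke Theorem \ref{A-M2} (equivalently Corollary \ref{2-pure}) to produce isometric analytic Toeplitz operators $\varphi_1(S)$, $\varphi_2(S)$ on $H^2(\DD)\otimes\CC^{d_j}$, with $d_j:=\dim\cD_{T_j^{(4)}}$, giving
$$
\|p(T_1^{(4)},T_2^{(4)})\|\le\min\bigl\{\|p(B_{T_1^{(4)}}\otimes I,\varphi_1(B_{T_1^{(4)}}))\|,\ \|p(\varphi_2(B_{T_2^{(4)}}),B_{T_2^{(4)}}\otimes I)\|\bigr\}.
$$

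The second step is to replace all auxiliary model operators $B_{T_2^{(2)},\lambda}$, $B_{T_1^{(3)},\mu}$, $B_{T_j^{(4)}}$ by the two universal models $B_1$ and $B_2$ --- the truncated shifts associated with the full c.n.u.\ parts $T_1^{(3)}\oplus T_1^{(4)}$ and $T_2^{(2)}\oplus T_2^{(4)}$ respectively. The governing monotonicity principle is that whenever an inner function $\psi$ divides an inner function $\Psi$, the space $\cN_{J_\psi}=H^2\ominus\psi H^2$ sits as an $S^*$-invariant subspace of $\cN_{J_\Psi}$, so the corresponding truncated shift $B_\psi$ is a compression of $B_\Psi$. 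Since the minimal polynomial of each c.n.u.\ component divides the minimal polynomial of the whole c.n.u.\ part of $T_j$, every auxiliary model is a compression of $B_1$ or $B_2$. Crucially, any analytic Toeplitz multiplier $\varphi_j(S)$ on $H^2\otimes\CC^{d_j}$ commutes with $S\otimes I$, so its adjoint leaves each subspace $\cN_{J_\psi}\otimes\CC^{d_j}$ invariant; compressing $\varphi_j(B_j)$ to the smaller space therefore reproduces $\varphi_j(B_{T_j^{(4)}})$. Consequently $p(B_{T_j^{(4)}}\otimes I,\varphi_j(B_{T_j^{(4)}}))$ is a compression of $p(B_j\otimes I,\varphi_j(B_j))$ and the norm bounds lift. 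Taking the maximum over the four pieces produces the inequality as stated, and the convention when $\sigma(T_i)\cap\TT=\emptyset$ is automatic because the unitary part of $T_i$ then vanishes and the pieces in Lemma \ref{structure} involving it are trivial.

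The most delicate point will be the compression argument in piece $4$: one must check that the isometric Toeplitz symbol $\varphi_1(S)$ produced by Theorem \ref{A-M2} for the restricted pair $(T_1^{(4)},T_2^{(4)})$ --- which is naturally tied to the smaller model $B_{T_1^{(4)}}$ --- can be evaluated coherently at the larger model $B_1$ so that its compression back to $\cN_{J_{m_{T_1^{(4)}}}}\otimes\CC^{d_1}$ recovers $\varphi_1(B_{T_1^{(4)}})$. This is a routine functional-calculus verification, but it is the one place where the model identifications need to be made carefully; everything else reduces to a direct assembly of the specialised inequalities proved earlier in the section.
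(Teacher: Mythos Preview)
Your proposal is correct and follows essentially the same route as the paper's own proof: decompose via Lemma \ref{structure}, bound each of the four summands using Propositions \ref{2-unitary}, \ref{unitary-pure}, and Corollary \ref{2-pure} (the paper packages these into Theorem \ref{general}), and then use minimal-polynomial divisibility to pass from the auxiliary truncated shifts $B_{T_2^{(2)},\lambda}$, $B_{T_1^{(3)},\mu}$, $B_{T_j^{(4)}}$ to the full models $B_1$, $B_2$. Your compression argument for piece 4---that $\varphi_j(S)^*$ leaves $\cN_{J_\psi}\otimes\CC^{d_j}$ invariant for any inner $\psi$, so $p(B_{T_j^{(4)}}\otimes I,\varphi_j(B_{T_j^{(4)}}))$ is a compression of $p(B_j\otimes I,\varphi_j(B_j))$---is exactly what the paper covers with its terse ``Similarly, we can prove that\ldots''; you have simply spelled it out more carefully.
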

\begin{proof}
First, note that $\sigma(T_1^{(1)}\oplus T_1^{(2)})=\sigma(T_1)\cap \TT$ and $\sigma(T_2^{(1)}\oplus T_2^{(3)})= \sigma(T_2)\cap \TT$. Hence,
$$\Omega_{(T_1^{(1)},T_2^{(1)})}\subset [\sigma(T_1)\cap \TT]\times [\sigma(T_2)\cap \TT].
$$
On the other hand, note that the minimal polynomial of $T_2^{(2)}|_{\cE_\lambda}$ divides the minimal polynomial of $T_2^{(2)}$, which divides the minimal polynomial of $T_2^{(2)}\oplus T_2^{(4)}$.
 Consequently,  the universal model $B_{T_2^{(2)},\lambda}$ has the property that $B_{T_2^{(2)},\lambda}^*$ is the restriction of $B_2^*$ to the appropriate invariant subspace. Hence, we obtain
 $$ \max_{\lambda\in \sigma(T_1^{(2)})}\|p(\lambda, B_{T_2^{(2)},\lambda} )\|\leq   \max_{\lambda\in \sigma(T_1^{(2)})}\|p(\lambda, B_{2})\|.
 $$
Similarly, we can prove that
$$
\|p(  B_{T_1^{(3)},\mu}, \mu )\|\leq   \max_{\lambda\in \sigma(T_1^{(2)})}\|p(\lambda, B_{2})\|
$$
and
$$
\min\left\{ \|p(B_{T_1^{(4)}}, \varphi_{U^*}(B_{T_1^{(4)}}))\|, \|p(\varphi_{W^*}(B_{T_2^{(4)}}), B_{T_2^{(4)}})\|\right\}\leq
\min\{\|p(B_1, \varphi_1(B_1))\|,\|p(B_2, \varphi_2(B_2))\|\}
$$
Now, we can use Theorem
\ref{general} to complete the proof.
\end{proof}

Denote by ${\bf D}^2(\cH)$  the set of all pairs $(T_1, T_2)$ of commuting contractions on a Hilbert space $\cH$ and
the the abstract bidisk
$${\bf D}^2:=\{{\bf D}^2(\cH): \ \cH \text{ is a Hilbert space }\}.
$$
For simplicity,   we write ${\bf T}\in {\bf D}^2$ if ${\bf T}$ is a pair $(T_1, T_2)$ of commuting contractions on a Hilbert space. We denote by ${\bf D}_{F}^2 $  the set of all pairs $(T_1, T_2)$ of commuting contractions acting on a finite dimensional  Hilbert space.
Given $k\in \NN$ and any $k\times k$  matrix $[p_{ij}]_k$ of polynomials in $\CC[z,w]$, we define
$$
\|[p_{ij}]_k\|_u:=\sup_{{\bf T}\in {\bf D}^2}\|[p_{ij}({\bf T})]_k\|\quad \text{ and }\quad \|[p_{ij}]_k\|_F:=\sup_{{\bf T}\in {\bf D}_F^2}\|[p_{ij}({\bf T})]_k\|.
$$

The following lemma is a matrix extension of a result from \cite{Dr2}.
\begin{lemma}\label{finite} For any $k\times k$  matrix $[p_{ij}]_k$ of polynomials in $\CC[z,w]$,
$$ \|[p_{ij}]_k\|_u=\|[p_{ij}]_k\|_F.
$$
\end{lemma}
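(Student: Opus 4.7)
The easy direction $\|[p_{ij}]_k\|_F \leq \|[p_{ij}]_k\|_u$ is immediate from the inclusion ${\bf D}_F^2 \subset {\bf D}^2$, so the plan is to establish the reverse inequality by reducing to one-dimensional pairs. My strategy is to chain two classical ingredients: first, Ando's dilation theorem to pass from $(T_1, T_2)$ to a commuting unitary pair, and then the joint spectral theorem for commuting normals to dominate the norm by a supremum over points of $\TT^2$.

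Concretely, given $(T_1, T_2) \in {\bf D}^2(\cH)$, Ando's theorem supplies commuting unitaries $(U_1, U_2)$ on some Hilbert space $\cK \supset \cH$ with $T_1^a T_2^b = P_\cH U_1^a U_2^b|_\cH$ for all $a, b \geq 0$. Extending entrywise to the matrix of polynomials and noting that compression to $\cH^{(k)} \subset \cK^{(k)}$ is implemented by the block-diagonal projection $\mathrm{diag}(P_\cH, \ldots, P_\cH)$, I will obtain
$$
[p_{ij}(T_1, T_2)]_k = P_{\cH^{(k)}}\,[p_{ij}(U_1, U_2)]_k|_{\cH^{(k)}},
$$
from which $\|[p_{ij}(T_1, T_2)]_k\| \leq \|[p_{ij}(U_1, U_2)]_k\|$ is immediate.

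Next, since $U_1$ and $U_2$ are commuting normal operators with joint spectrum $\sigma(U_1, U_2) \subset \TT^2$, the joint continuous functional calculus, amplified to matrix-valued symbols, yields
$$
\|[p_{ij}(U_1, U_2)]_k\| = \sup_{(\lambda, \mu) \in \sigma(U_1, U_2)}\|[p_{ij}(\lambda, \mu)]_k\| \leq \sup_{(\lambda, \mu) \in \TT^2}\|[p_{ij}(\lambda, \mu)]_k\|.
$$
Finally, any point $(\lambda, \mu) \in \TT^2$ corresponds to the pair $(\lambda I_\CC, \mu I_\CC) \in {\bf D}_F^2(\CC)$ acting on the one-dimensional Hilbert space $\CC$, so $\|[p_{ij}(\lambda, \mu)]_k\| \leq \|[p_{ij}]_k\|_F$. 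Putting the three estimates together gives $\|[p_{ij}(T_1, T_2)]_k\| \leq \|[p_{ij}]_k\|_F$ for every $(T_1, T_2) \in {\bf D}^2$, and taking the supremum over the left-hand side completes the argument. I do not anticipate any real obstacle; the only step that requires care is verifying that Ando's scalar dilation identity amplifies cleanly to matrices of polynomials, which is routine, together with the observation that the matrix-valued spectral calculus factors through the $C^*$-isomorphism $M_k(C^*(U_1, U_2)) \cong C(\sigma(U_1, U_2), M_k)$.
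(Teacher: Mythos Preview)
Your proof is correct. The chain Ando dilation $\Rightarrow$ joint spectral theorem $\Rightarrow$ scalar pairs on $\CC$ works exactly as you describe, and the matrix amplifications cause no trouble: compression to $\cH^{(k)}$ is a contraction, and an injective $*$-homomorphism $C^*(U_1,U_2)\cong C(\sigma(U_1,U_2))\hookrightarrow B(\cK)$ is automatically completely isometric, so the $M_k$-valued identification is legitimate. In fact your argument proves the slightly sharper statement
\[
\|[p_{ij}]_k\|_u=\sup_{(\lambda,\mu)\in\TT^2}\|[p_{ij}(\lambda,\mu)]_k\|_{M_k},
\]
i.e.\ one-dimensional pairs already realize the supremum.

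The paper's proof is genuinely different: it does not invoke And\^o's theorem at all. Instead it tensors $(T_1,T_2)$ with the truncated shift $S_\cM=P_\cM M_{e^{it}}|_\cM$ on trigonometric polynomials of degree $\le m$ and restricts to a finite-dimensional cyclic subspace; this produces the Fej\'er-type means $K_m(p_{ij})$, shows $\|[K_m(p_{ij})]_k\|_u\le\|[p_{ij}]_k\|_F$, and then lets $m\to\infty$. This is Drury's technique, amplified to matrices.

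What each approach buys: yours is shorter and more conceptual, but it leans on And\^o's dilation theorem, which is exactly the depth the paper is trying to analyze and refine; in particular your route does not extend to the polydisk ${\bf D}^n$ for $n\ge3$, whereas the paper explicitly remarks that its Fej\'er-kernel argument does. The paper's method also yields the extra information (noted just after the proof) that the supremum is already attained over \emph{nilpotent} commuting contractions, which your spectral-theorem route does not give directly.
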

\begin{proof}
Let $(T_1, T_2)$ be a pair of commuting contractions on a Hilbert space $\cH$. For each $i,j\in \{1,\ldots, k\}$, we  fix a polynomial
$$
p_{ij}(z,w)=\sum_{n_1,n_2\in \ZZ_+, n_1+n_2\leq N} a^{ij}_{n_1, n_2} z^{n_1} w^{n_2}
$$
in $\CC[z,w]$, and assume that $\|[p_{ij}]_k\|_F\leq 1$.
Let $\cM\subset H^2(\TT)$ be the subspace of all analytic trigonometric polynomials
of the form
$g(e^{it})=\sum_{k\in \ZZ_+, |k|\leq m} c_k e^{ikt}$
and let $S_\cM:=P_\cM M_{e^{it}}|_\cM$ be the compression of the unilateral shift to the subspace $\cM$. Note that the operators $N_1:=S_\cM\otimes T_1$ and $N_2:=S_\cM\otimes T_2$ are commuting contractions acting on $\cM\otimes \cH$.
Let $f(e^{it}):=\frac{1}{\sqrt{m+1}} \sum_{k\in \ZZ_+, |k|\leq m} e^{ikt}$ and let $\xi_1,\ldots, \xi_k\in \cH$ be such that $\sum_{i=1}^k |\xi_i|^2\leq 1$.
Define the finite dimensional subspace
$$
\cN:=\text{\rm span} \left\{ N_1^{n_1} N_2^{n_2} (f\otimes \xi_j): \ n_1, n_2\in \ZZ_+, j\in \{1,\ldots, k\}\right\}.
$$
Since $N_1|_\cN$ and $N_2|_\cN$ are communing contractions acting on the finite dimensional space $\cN$ and $\|[p_{ij}]_k\|_F\leq 1$, we have
\begin{equation} \label{inner}
\left< [p_{ij}(N_1, N_2)]_k\left[\begin{matrix} f\otimes \xi_1\\ \vdots \\f\otimes \xi_k\end{matrix}\right], \left[\begin{matrix} f\otimes \eta_1\\ \vdots \\f\otimes \eta_k\end{matrix}\right]\right>\leq 1
\end{equation}
for any  $\eta_1,\ldots, \eta_k\in \cH$  such that   $\sum_{i=1}^k |\eta_i|^2\leq 1$.
Setting
$$
K_m(p_{ij})(z,w):=\sum_{n_1,n_2\in \ZZ_+, n_1+n_2\leq N} \left(1-\frac{n_1+n_2}{m+1}\right)a^{ij}_{n_1, n_2} z^{n_1} w^{n_2},
$$
straightforward calculations reveal that
$$
\left<p_{ij}(N_1, N_2)(f\otimes \xi_j), f\otimes \xi_i\right>=
\left<K_m(p_{ij})(T_1,T_2)\xi_j,\eta_i\right>
$$
if $m>N$.
Consequently,  relation \eqref{inner} implies
$$
\left< [K_m(p_{ij})(T_1, T_2)]_k\left[\begin{matrix} \xi_1\\ \vdots \\\xi_k\end{matrix}\right], \left[\begin{matrix} \eta_1\\ \vdots \\ \eta_k\end{matrix}\right]\right>\leq 1,
$$
for any  $\xi_1,\ldots, \xi_k\in \cH$  and  $\eta_1,\ldots, \eta_k\in \cH$  such that $\sum_{i=1}^k |\xi_i|^2\leq 1$ and $\sum_{i=1}^k |\eta_i|^2\leq 1$, respectively. Hence, we deduce that
$\|[K_m(p_{ij})]_k\|_u\leq 1$.

On the other hand, we have
$$
\|K_m(p_{ij})(T_1, T_2)- p_{ij}(T_1,T_2)\|\leq
\sum_{n_1,n_2\in \ZZ_+, n_1+n_2\leq N} \frac{n_1+n_2}{m+1}|a^{ij}_{n_1, n_2}|
$$
 for any $(T_1, T_2)\in {\bf D}^2(\cH)$.
This shows that
$\|K_m(p_{ij}) - p_{ij} \|_u\to 0$ as $m\to\infty$ and, consequently,
$$
\|[K_m(p_{ij}) - p_{ij}]_k \|_u\leq \left(\sum_{i,j} \|K_m(p_{ij}) - p_{ij} \|_u^2\right)^{1/2}\to 0,\qquad \text{ as } m\to\infty.
$$
Since,
$$
\|[p_{ij}]_k\|_u\leq \|[p_{ij}]_k-[K_m(p_{ij})]_k\|_u +\|[K_m(p_{ij})]_k\|_u
$$
and $\|[K_m(p_{ij})]_k\|_u\leq 1$ for any $m\to \infty$, we conclude that
$\|[p_{ij}]_k\|_u\leq 1$. This shows that
$\|[p_{ij}]_k\|_u\leq \|[p_{ij}]_k\|_F$. Since the reverse inequality is clear, the proof is complete.
\end{proof}

A closer look at the proof of Lemma \ref{finite} reveals that
$$ \|[p_{ij}]_k\|_u=\|[p_{ij}]_k\|_N:=\sup_{{\bf T}\in {\bf D}_N^2}\|[p_{ij}({\bf T})]_k\|,
$$
where ${\bf D}_N^2$ stands for the set of all pairs $(T_1, T_2)$ of nilpotent commuting contractions. We also remark that Lemma \ref{finite} can be easily extended, with the same proof, to the abstract polydisk ${\bf D}^n$.

\begin{theorem} \label{ando-bidisk}  There   is an  isometric  analytic Toeplitz operator $\varphi(S)\in B(H^2(\DD)\otimes \ell^2)$   such that
$$
\|[p_{rs}({T}_1,{T}_2)]_{k}\|\leq  \|[p_{rs}(S\otimes I_{ \ell^2},\varphi(S))]_{k }\|,
$$
 for any commuting contractions ${T}_1, {T}_2\in B(\cH)$,  any   $k\times k$ matrix $[p_{rs}]_{k}$ of polynomials in $\CC[z,w]$, and  any $k\in \NN$.
\end{theorem}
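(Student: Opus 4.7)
The plan is to follow the blueprint of the proof of Theorem \ref{ando11}, but to replace its building block (Theorem \ref{ando1}) with Theorem \ref{A-M2}, which in this scalar setting actually produces an \emph{isometric} analytic Toeplitz operator. The enabling reduction is supplied by Lemma \ref{finite} together with the remark immediately following it: the universal norm satisfies $\|[p_{ij}]_k\|_u = \|[p_{ij}]_k\|_N$, where $\|\cdot\|_N$ is the supremum over pairs $(T_1,T_2)$ of \emph{nilpotent} commuting contractions. Nilpotent contractions are automatically completely non-unitary, live on finite-dimensional spaces, have finite-dimensional defect spaces, and belong to varieties $\cV_{J_{\psi_i}}$ with $\psi_i(z) = z^{n_i}$ for suitable exponents --- precisely the hypotheses required by Theorem \ref{A-M2}.

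For each nilpotent pair $(T_1,T_2)$ and each $U \in \cU_{(T_1,T_2)}$, Theorem \ref{A-M2} yields an isometric $\varphi_{U^*}(S) \in H^\infty \bar\otimes B(\CC^{d_1})$ with
$$\|[p_{rs}(T_1,T_2)]_k\| \leq \|[p_{rs}(B_1 \otimes I_{\CC^{d_1}},\,\varphi_{U^*}(B_1))]_k\|,$$
where $B_1$ is the truncated shift on $\cN_{J_{z^{n_1}}}$. Since $\varphi_{U^*}(S)$ is an analytic matrix Toeplitz operator, the subspace $z^{n_1} H^2(\DD) \otimes \CC^{d_1}$ is invariant under both $S \otimes I_{\CC^{d_1}}$ and $\varphi_{U^*}(S)$, so $\cN_{J_{z^{n_1}}} \otimes \CC^{d_1}$ is jointly coinvariant and compressing gives
$$\|[p_{rs}(T_1,T_2)]_k\| \leq \|[p_{rs}(S \otimes I_{\CC^{d_1}},\,\varphi_{U^*}(S))]_k\|,$$
with $\varphi_{U^*}(S)$ still isometric.

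With this scalar, isometric version of Theorem \ref{ando1} in hand, I would replicate the four-step direct-sum argument of Theorem \ref{ando11} essentially verbatim: (i) for a fixed $[p_{ij}]_k$, pick a sequence of nilpotent pairs $\{(T_1^{(m)},T_2^{(m)})\}$ attaining $\|[p_{ij}]_k\|_u$ and set $\psi(S) := \bigoplus_m \varphi^{(m)}(S)$ on $\bigoplus_m (H^2(\DD) \otimes \CC^{d_1(m)}) \cong H^2(\DD) \otimes \ell^2$; (ii) enumerate matrix polynomials with coefficients in $\QQ + i\QQ$, repeat this step for each, and set $\Psi_k(S) := \bigoplus_s \psi^{(s)}(S)$; (iii) extend by continuity in the coefficients of the polynomial to all elements of $M_k(\CC[z,w])$; (iv) take a final direct sum over $k$ to produce a single $\varphi(S)$ on $H^2(\DD) \otimes \ell^2$. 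The only point demanding attention --- and the reason the argument works at all --- is that isometricity propagates through each direct sum: a direct sum of isometric elements of $H^\infty \bar\otimes B(\CC^{d_m})$ is again an isometric element of $H^\infty \bar\otimes B(\ell^2)$ under the natural identification, and the direct-sum analytic Toeplitz structure is preserved. I do not foresee genuine obstacles beyond this bookkeeping; the real substance of the theorem is the reduction to nilpotent pairs via Lemma \ref{finite} combined with the isometric dilation of Theorem \ref{A-M2}, both of which are already established earlier in the paper.
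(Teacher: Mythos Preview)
Your proposal is correct and follows essentially the same route as the paper's proof: reduce via Lemma \ref{finite} to finite-dimensional pairs, invoke Theorem \ref{A-M2} to obtain isometric analytic Toeplitz dilations, and then run the identical direct-sum/enumeration argument of Theorem \ref{ando11}. The only (harmless) difference is that the paper reduces to \emph{pure} finite-dimensional pairs and applies Theorem \ref{A-M2} with $\psi_1=\psi_2=0$, so that $B_1=S$ already and no lifting step is needed; your detour through nilpotent pairs and nontrivial $\psi_i=z^{n_i}$, followed by the coinvariance lift from $B_1$ to $S$, is correct but unnecessary.
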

\begin{proof}
According to Lemma \ref{finite},
$ \|[p_{ij}]_k\|_u=\|[p_{ij}]_k\|_F$ for any   $k\times k$  matrix  $[p_{ij}]_k$  of polynomials in $\CC[z,w]$. Hence, we deduce that
\begin{equation} \label{pure}
\|[p_{ij}]_k\|_u=\sup_{(T_1,T_2)\in {\bf D}_F^2, r\in [0,1)}\|[p_{ij}(rT_1,T_2)]_k\|=\sup_{(T_1,T_2)\in {\bf D}_F^2,  T_1, T_2 \text{ pure}}\|[p_{ij}(T_1,T_2)]_k\|.
\end{equation}
Fix $[p_{ij}]_k\in M_k(\CC[z,w])$.
Due to relation \eqref{pure}, we can choose a sequence $\left\{(T_1^{(m)}, T_2^{(m)})\right\}_{m=1}^\infty$  in ${\bf D}^2_F$ with $T_1^{(m)}$ and $T_2^{(m)}$ pure contractions such that
\begin{equation}
\label{sup}
\|[p_{ij}]_k\|_u=\sup_{m}\|[p_{ij}(T_1^{(m)}, T_2^{(m)})]_k\|
\end{equation}
Due to Theorem \ref{A-M2}, when $\cV_{J_\psi}=B(\cH)$, for each $m\in \NN$, we find $d(m)\in \NN$ and an isometric analytic Toeplitz operator $\varphi_m(S)$ on the Hardy space $H^2(\DD)\otimes \CC^{d(m)}$ such that
$$
\|[p_{ij}(T_1^{(m)},T_2^{(m)})]_k\|\leq  \|[p_{ij}(S\otimes I_{\CC^{d(m)}},\varphi_m(S))]_{k }\|.
$$
Therefore, relation \eqref{sup} implies
\begin{equation*}
\begin{split}
\|[p_{ij}]_k\|_u&=\|[p_{ij}(\oplus_{m=1}^\infty T_1^{(m)}, \oplus_{m=1}^\infty T_2^{(m)})]_k\|\\
&\leq
\|[p_{ij}(\oplus_{m=1}^\infty  (S\otimes I_{\CC^{d(m)}}), \oplus_{m=1}^\infty \varphi_m(S))]_k\|\leq \|[p_{ij}]_k\|_u.
\end{split}
\end{equation*}
Hence, we deduce that
\begin{equation}
\label{ma}
\|[p_{ij}]_k\|_u= \|[p_{ij}(S\otimes I_{ \ell^2},\psi(S))]_{k }\|,
\end{equation}
where $\psi(S):=\oplus_{m=1}^\infty \varphi_m(S))$ is
an  isometric analytic Toeplitz operator   on the Hardy space $H^2(\DD)\otimes \ell^2$.
Let $\CC_\QQ[z,w]$ be the set of all polynomials with coefficients in $\QQ+i\QQ$.
Since the set
$\{[p_{ij}]_k: \ p_{ij}\in \CC_\QQ[z,w]\}$ is countable, let
$
[p^{(1)}_{ij}]_k, [p^{(2)}_{ij}]_k, \ldots$ be an enumeration of it.
Due to relation \eqref{ma}, for each $s\in \NN$, there is an isometric  analytic Toeplitz operator  $\psi_s(S)$ on the Hardy space $H^2(\DD)\otimes \ell^2$ such that
\begin{equation}
\label{ma2}
\|[p^{(s)}_{ij}]_k\|_u= \|[p_{ij}^{(s)}(S\otimes I_{ \ell^2},\psi_s(S))]_{k }\|,\qquad s\in \NN.
\end{equation}
Define the isometric analytic operator $\Gamma_k(S):=\oplus_{s=1}^\infty \psi_s(S)$ acting on $H^2(\DD)\otimes \ell^2$ and let us prove that
\begin{equation}
\label{qij}
\|[q_{ij}]_k\|_u= \|[q_{ij}(S\otimes I_{ \ell^2},\Gamma_k(S))]_{k }\|
\end{equation}
for any $[q_{ij}]_k\in M_k(\CC[z,w])$.
First, note that relation \eqref{ma2} implies
\begin{equation}
\label{Ga}
\|[p^{(s)}_{ij}]_k\|_u= \|[p_{ij}^{(s)}(S\otimes I_{ \ell^2},\Gamma_k(S))]_{k }\|\qquad \text{for any } s\in \NN.
\end{equation}
 Fix $[q_{ij}]_k\in M_k(\CC[z,w])$ and $\epsilon >0$,  and choose $[p^{(s_0)}_{ij}]_k$ such that
 \begin{equation}\label{u}
 \left\|[q_{ij}]_k-[p^{(s_0)}_{ij}]_k\right\|_u<\epsilon.
 \end{equation}
 Due to relations \eqref{ma}, \eqref{u}, and \eqref{Ga}, we deduce that there exists an isometric analytic Toeplitz operator $\psi^{(q)}(S)$ on $H^2(\DD)\otimes \ell^2$ such that
 \begin{equation*}
 \begin{split}
 \|[q_{ij}]_k\|_u&=\|[q_{ij}(S\otimes I_{ \ell^2},\psi^{(q)}(S))]_{k }\|\\
 &
\leq\|[p_{ij}^{(s_0)}(S\otimes I_{ \ell^2},\psi^{(q)}(S))]_{k }\| +\epsilon\\
 &\leq \|[p_{ij}^{(s_0)}]_k\|_u+\epsilon\\
 &=
 \|[p_{ij}^{(s_0)}(S\otimes I_{ \ell^2},\Gamma_k(S))]_{k }\|+\epsilon\\
 &\leq
 \|[q_{ij}(S\otimes I_{ \ell^2},\Gamma_k(S))]_{k }\|+2\epsilon
 \end{split}
 \end{equation*}
for any $\epsilon>0$, which proves  relation \eqref{qij}.
Now, it is easy to see that  $\varphi(S):=\oplus_{k=1}^\infty \Gamma_k(S)$ is
 an  isometric analytic operator   acting on $H^2(\DD)\otimes \ell^2$ such that
\begin{equation*}
\|[q_{ij}]_k\|_u= \|[q_{ij}(S\otimes I_{ \ell^2},\varphi(S))]_{k }\|
\end{equation*}
for any $[q_{ij}]_k\in M_k(\CC[z,w])$ and any $k\in \NN$.
The proof is complete.
\end{proof}
Note that since $S\otimes I_{ \ell^2}$ and $\varphi(S)$ are commuting  isometries, we can extend them to commuting unitaries, due to It\^ o's theorem \cite{SzFBK-book}, and using the spectral theorem we obtain
$$\|p(T_1,T_2)\|\leq \|p(S\otimes I_{ \ell^2},\varphi(S))\|= \|p\|_{\DD^2}.
$$
for any commuting contractions ${T}_1, {T}_2\in B(\cH)$ and  any   polynomial $p$ in $\CC[z,w]$, recovering in this way  And\^ o's inequality.
The closed non-self-adjoint algebra generated by
$ S \otimes I_{\ell^2} , \varphi({S})$ and the identity is denoted by
$\cA_u({\DD^2})$. In light of Theorem \ref{ando-bidisk},  $\cA_u({\DD^2})$ can be seen as the universal operator algebra for two commuting contractions.

We remark that the operator $\varphi(S)$ is Theorem \ref{ando-bidisk} is a direct sum of inner analytic operators on $H^2(\DD)\otimes \CC^k$, $k\in \NN$.
Consequently, using Agler-McCarthy's inequality \cite{AM} we deduce that there is a sequence of distinguished varieties $\{V_k\}_{k=1}^\infty$  in $\DD^2$ such that
$$\|p(T_1, T_2)\|\leq \sup_k\|p\|_{V_k}=\|p\|_{\DD^2}
$$
for any  commuting contractions $T_1$ and $T_2$, and any polynomial $p$ in two variables.

\bigskip

       %

\end{document}